\numberwithin{equation}{section}
\newtheorem{theorem}{Theorem}[section]
\newtheorem{lemma}[theorem]{Lemma}
\newtheorem{proposition}[theorem]{Proposition}
\newtheorem{corollary}[theorem]{Corollary}
\theoremstyle{definition} 
\newtheorem{example}[theorem]{Example}
\newtheorem{definition}[theorem]{Definition}
\newtheorem{remark}[theorem]{Remark}
\newtheorem{assumption}[theorem]{Assumption}
\def\EE{\mathbb{E}}
\def\NN{\mathbb{N}}
\def\PP{\mathbb{P}}
\def\RR{\mathbb{R}}
\def\cM{\mathcal{M}}
\def\cP{\mathcal{P}}
\def\cX{\mathcal{X}}
\def\cK{\mathcal{K}}
\def\cD{\mathcal{D}}
\def\cW{\mathcal{W}}
\def\ra{\rightarrow}
\def\eqdist{\buildrel (d) \over = }
\def\Rna{R_{\mathbf{a}}^{(n)}}
\def\lk{\mathsf{L}_{n,k}}
\def\lkz{\mathsf{L}^{\mathbf{Z}}_{n,k}}
\def\tk{\mathbb{U}_k}
\def\qnk{\mathbf{Q}_{n,k}}
\def\rnk{\mathbf{R}_{n,k}}
\def\vnk{\mathbb{V}_{n,k}}
\def\znk{\mathbf{Z}_{n,k}}
\def\newxi{\mathsf{s}}
\def\newkappa{\delta}
\def\cone{\eta}
\def\newlambda{\bar{\Lambda}}
\begin{document}

\begin{center} 
  \textbf{\Large  Large deviation principles induced  by the Stiefel manifold, and  random  multi-dimensional projections}
\vskip 6mm
\textit{Steven Soojin Kim\footnote{These results are part of this author's Ph.D. thesis \cite{skim-thesis}} and  Kavita Ramanan\footnote{Kavita Ramanan was supported in part by NSF-DMS Grants 1713032 and 1954351}}\\
\vskip 3mm
{Brown University}\\
\end{center}

\begin{center}
\textit{This article is dedicated to the memory of Elizabeth Meckes} 
\end{center}

\begin{abstract}
  For fixed positive integers $k < n$, given an $n$-dimensional random vector $X^{(n)}$, consider 
  its $k$-dimensional projection $\mathbf{a}_{n,k}X^{(n)}$, where
  $\mathbf{a}_{n,k}$ is an $n \times k$-dimensional matrix
   belonging to the
   Stiefel manifold $\mathbb{V}_{n,k}$ of orthonormal $k$-frames in $\mathbb{R}^n$.
For a class of sequences  $\{X^{(n)}\}_{n \in \NN}$  that includes 
the uniform distributions on suitably scaled $\ell_p^n$ balls, $p \in  (1,\infty]$, and
  product measures with sufficiently light tails, 
      it is shown that the sequence of projected vectors  
      $\{\mathbf{a}_{n,k}^\intercal X^{(n)}\}_{n \in \NN}$ satisfies a large deviation principle  whenever 
  the  empirical measures of the rows of $\sqrt{n} \mathbf{a}_{n,k}$ converge, as $n \rightarrow \infty$, to a probability 
  measure on $\RR^k$.
  In particular,  when $\mathbf{A}_{n,k}$ is a random matrix drawn  from the Haar measure on $\mathbb{V}_{n,k}$, 
  this is shown to imply a large deviation principle for the sequence of random projections $\{\mathbf{A}_{n,k}^\intercal X^{(n)}\}_{n \in \NN}$ in the {\em quenched} sense
  (that is, conditioned on  almost sure realizations of $\{\mathbf{A}_{n,k}\}_{n \in \NN}$). 
  Moreover, a variational formula is obtained for the rate function
  of the large deviation principle for the {\em annealed} projections $\{\mathbf{A}_{n,k}^\intercal X^{(n)}\}_{n \in \NN}$,  which is expressed in terms of  a family of quenched rate functions and
  a modified entropy term. 
  A key step in this analysis is  a large deviation principle for the sequence of empirical measures of rows of
  $\sqrt{n} \mathbf{A}_{n,k}$, $n \geq k$,  
  which   may be of independent interest.   The study of multi-dimensional random projections of high-dimensional measures is of interest in asymptotic functional analysis, convex geometry and  statistics.  Prior results on quenched large
  deviations for random projections   of $\ell_p^n$ balls have been essentially restricted to the one-dimensional setting. 
\end{abstract}

{\bf Keywords}: large deviations;  Stiefel manifold;  random projections; quenched; annealed; rate function;
$\ell_p^n$ balls; asymptotic convex geometry;  variational formula \\

{\bf MSC 2020 subject classifications}: 60F10; 60B20; 52A23

\section{Introduction}

\subsection{Background}\label{ssec-background}

The study of high-dimensional measures and their lower-dimensional projections is a central theme in  high-dimensional probability, asymptotic functional analysis and  
 convex geometry, where in the latter case  the measures of interest are distributions on 
convex bodies, which are compact, convex sets with non-empty interior  (see, e.g., \cite{Kla07a,Mec12b}).  
Multidimensional projections  of high-dimensional random vectors are  also relevant  in
statistics,  data analysis and computer science \cite{DiaFre84,JohLin84}.  
 Recent work has shown that large deviation principles (LDPs) that capture the tail behavior
  of lower-dimensional random projections can provide more interesting information about the original high-dimensonal measures
  than central limit theorem type results that capture universal phenomena of fluctuations. For example, in the case of
    $\ell_p^n$ balls,  $p \in [1,\infty)$, which are  fundamental objects in convex geometry, 
  this  was first  illustrated by 
    LDPs for  one-dimensional projections obtained in \cite{GanKimRam17,skim-thesis}, and subsequently by LDPs 
  for norms  of samples from $\ell_p^n$ balls and their multi-dimensional projections   in 
  \cite{AloGutProTha18,KabProTha19a,KimLiaRam20},  as well as corresponding 
  refined large deviation estimates   
   obtained in  \cite{LiaRam20a,Kaufmann21}.  
  LDPs of random projections of high-dimensional measures
  are broadly of two types, the terminology arising from statistical physics:  so-called ``quenched'' LDPs, where one conditions on the choice of the (sequence of) sub-spaces, bases or
  directions onto which one  projects; or  ``annealed'' LDPs, which  average over the randomness arising in the choice of the projection. 
 While most of the work described above on $\ell_p^n$ balls focused on one-dimensional LDPs (either for one-dimensional projections or norms of higher-dimensional projections), 
  in \cite{KimLiaRam20}, annealed LDPs were also established for multi-dimensional projections
  of high-dimensional measures that  satisfy  a general condition called the
  asymptotic thin shell condition. This condition was shown to be satisfied in 
  \cite{KimLiaRam20} by several classes of measures, including
   product measures with polynomial tail decay,  
  $\ell_p^n$ balls,  $p \in [1,\infty]$, and classes of Orlicz balls and  Gibbs measures.

In this article, we  establish 
quenched LDPs for multidimensional random projections of a class of high-dimensional measures
as the dimension $n$ goes to infinity.
 Quenched LDPs can often provide more geometric information
    than annealed LDPs, but their analysis is typically more difficult because one can no longer exploit
    symmetry properties of the random projection measure.  
To state our results more precisely, for $k \in \RR^n$, let $I_k$ denote the $k\times k$
 identity matrix, and for $n > k$, let 
\begin{equation}\label{eq-vnk}
\vnk := \{A \in \RR^{n\times k} : A^\intercal  A = I_k\}
\end{equation}
denote the Stiefel manifold of $k$-frames in $\RR^n$.   Observe that the set  $\mathbb{V}_{n,n}$ can be
  identified with the set ${\mathcal O}(n)$ of $n \times n$ orthogonal matrices with columns of
  norm $1$. Also, note that 
  for $k, n \in \NN$, $k < n$, any $a_{n,k} \in \vnk$ defines a linear
  projection from $n$ to $k$ dimensions.  
  Fixing a probability space $(\Omega,\mathcal{F},\PP)$, we consider sequences of random vectors $\{X^{(n)}\}_{n \in \NN}$ defined on this space 
  that satisfy a certain set of conditions (see Assumption \ref{as-quenched}), 
  which include, for example,   $X^{(n)}$  uniformly distributed on an $\ell_p^n$ ball of radius $n^{1/p}$,
  $p \geq 2$,
  or $X^{(n)}$ distributed  according to a product measure with sufficiently light tails. 
For any fixed $k \in\NN$, let $\NN_k := \{n \in \NN: n > k\}$, and 
consider the sequence of  $k$-dimensional projections 
\begin{equation}\label{eq-atx2}
\{n^{-1/2} \mathbf{a}_{n,k}^\intercal X^{(n)}	\}_{n\in\NN_k},
\end{equation}
where $\mathbf{a} := \{\mathbf{a}_{n,k}\}_{n \in \NN_k}$, with $\mathbf{a}_{n,k} \subset \mathbb{V}_{n,k}$ for each $n \in \NN_k$.
Also, let 
 \begin{equation}\label{emp_L}
  \mathsf{L}_{n,k}^{\mathbf{a}} := \frac{1}{n}\sum_{i=1}^n \delta_{\sqrt{n}\mathbf{a}_{n,k}(i,\cdot\,)}, \quad n\in\NN_k,   
 \end{equation}
 be the associated sequence of empirical  measures of the rows of $\sqrt{n} \mathbf{a}_{n,k}$.  
 Our first result, Theorem \ref{th-quenched}, shows that whenever $\{\mathsf{L}_{n,k}^{\mathbf{a}}\}_{n \in \NN_k}$
 converges in the $q_\star$-Wasserstein topology (see Definition \ref{def-Wasserstein}) to a measure $\nu$, then the sequence of
 random projections  $\{n^{-1/2} \mathbf{a}_{n,k}^\intercal X^{(n)}	\}_{n\in\NN_k},$  satisfies an LDP 
 with a rate function that we denote by ${\mathcal J}^{\sf{qu}}_\nu$. 
 In particular, this implies  a quenched LDP for the
sequence $\{\mathbf{A}_{n,k}^\intercal X^{(n)}\}_{n \in \NN_k},$ where the  random matrix 
\begin{equation*}
\mathbf{A}_{n, k} = [ \mathbf{A}_{n,k}(i,j) ]_{i=1,\dots, n; \, j=1,\dots, k} 
\end{equation*}
is sampled,  independently of $\{X^{(n)}\}_{n \in \NN}$,  from $\sigma_{n,k}$, the Haar measure on  $\vnk$
(i.e., the unique probability measure on $\vnk$ that is invariant under the group ${\mathcal O}(n)$ of orthogonal transformations).  
In \cite{KimLiaRam20}, it was shown that 
$\{\mathbf{A}_{n,k}^\intercal X^{(n)}\}_{n \in \NN_k}$ also satisfies an annealed LDP.
Our second result,  Theorem \ref{th-var}, establishes a variational formula  for the (annealed) 
rate function ${\mathcal J}^{\sf{an}}$, in terms of  the quenched rate functions ${\mathcal J}^{\sf{qu}}_\nu$.  
Along the way, for any $q \in (0,2)$, 
in  Theorem \ref{th-stldp}, we also 
establish an LDP for the random empirical measure sequence $\{ \mathsf{L}^{A}_{n,k}\}_{n \in \NN_k}$ in
the $q$-Wasserestein topology, 
which  may be of independent interest.

In the next section, we introduce some basic notation and terminology that will be used throughout, and then provide
precise  statements of our main results in Section \ref{ssec-mainresults}, with proofs presented
in Sections \ref{sec-bartlett}--\ref{sec-pfva}. 

\subsection{Notation and Terminology}
\label{subs-notat}
  
We first recall the definition of an LDP, referring to \cite{DemZeiBook} for further background on large deviations theory.
\begin{definition}\label{def-ldp}
Let $\cX$ be a topological space with Borel sigma-algebra $\mathcal{B}$. A sequence of probability measures $\{P_n\}_{n\in\NN}\subset \cP(\cX)$  is said to satisfy a \emph{large deviation principle (LDP)} in $\cX$ with rate function $I:\cX \ra [0,\infty]$ if for all $B \in \mathcal{B}$,
\begin{equation*}
  -\inf_{x\in B^\circ} I(x) \le \liminf_{n\ra\infty} \tfrac{1}{n} \log P_n(B) \le \limsup_{n\ra\infty} \tfrac{1}{n} \log P_n(B) \le -\inf_{x\in \bar{B}} I(x),
\end{equation*}
where $B^\circ$ and $\bar{B}$ denote the interior and closure of $B$, respectively. We say $I$ is a \emph{good rate function (GRF)} if it has compact level sets. Analogously, a sequence of $\cX$-valued random variables $\{\mathsf{x}_n\}_{n\in\NN}$ is said to satisfy an LDP with GRF $I$ if the sequence of their laws $\{\PP \circ \mathsf{x}_n\}_{n\in\NN}$ does.
\end{definition}
  We recall some definitions that will prove useful in our discussion of rate functions. Given a function $f:\RR^m \rightarrow [-\infty, \infty]$
      for some $m \in \NN$, 
  we let $f^*$ denote its \emph{Legendre transform}: 
   \[ f^* (t) := \sup_{ s \in \RR^m} \left[\langle s, t \rangle   -
    f (s) \right], \qquad t \in \RR^m. 
   \]
   Since we will  frequently 
   invoke the contraction principle,  Cram\'{e}r's theorem and Sanov's theorem, we refer the reader to
    Theorem 4.2.1, Corollary  6.1.6, and Section 6.2,  respectively, of  \cite{DemZeiBook}, for precise statements. 

Next, for $p\in[1,\infty]$, let $\|\cdot\|_p$ denote the $\ell_p^k$ norm on $\RR^k$. When $p = 2$, and where the context makes it clear, we will omit the subscript and simply  write $\|\cdot\|$ for the Euclidean norm.   Let $\cP(\RR^k)$ denote the space of probability measures on $\RR^k$, by default equipped with the topology of weak convergence. We will sometimes consider the following restricted subsets of probability measures:  for $q > 0$, let
\begin{equation*}
  \cP_q(\RR^k) := \left\{ \nu \in \cP(\RR^k) :   \int_{\RR^k} \|x\|^q \nu(dx) < \infty \right\}.
\end{equation*}

\begin{definition}\label{def-Wasserstein}
  For $q > 0$, we say a sequence of probability measures $\{\nu_n\}_{n\in\NN}\subset \cP_q(\RR^d)$
  converges to a limit $\nu$ with respect to the \emph{$q$-Wasserstein topology} if we have both weak convergence, denoted 
  $\nu_n\Rightarrow \nu$, as well as convergence of $q$-th moments $\int_{\RR^d} \|x\|^q \nu_n(dx) \ra \int_{\RR^d} \|x\|^q \nu(dx) $. As noted in Section 6 of \cite{Vil08}, the $q$-Wasserstein topology is metrizable through the $q$-Wasserstein metric
which we denote by $\mathcal{W}_q$.
\end{definition}

\section{Main results}\label{ssec-mainresults}

We now provide a precise statement of our results.  We start by defining the class of random vectors that we consider.
As in \cite[Definition 2.3.5]{DemZeiBook}, we define the domain of an (extended real-valued)  function $f$ defined on a Euclidean space $S$,  denoted $D_f$, as the subset of points in $S$ for which $f$ is finite; furthermore, the function $f$ is said to be \emph{essentially smooth} if: $D_{f} \neq \emptyset$; $f$ is differentiable in the interior $D_{f}^\circ$ of $D_f$; and  $f$ is ``steep" (i.e., if $D_{f}$ has a boundary $\partial D_{f}$, then $\lim_{t \rightarrow \partial D_{f}} |\nabla f (t)| = \infty$).

\begin{assumption}[quenched]\label{as-quenched} 
We impose the following conditions on the sequence of random vectors $\{X^{(n)}\}_{n\in \NN}$. 
\begin{enumerate}[label=(\roman*)]
\item \textsc{Representation:} there exists a sequence of i.i.d.\ real-valued random variables $\{\xi_j\}_{j\in \NN}$, a Borel measurable function $\mathbf{r}:\RR \ra \RR_+$, and a continuous function $\rho:\RR_+ \ra \RR_+$ such that
\begin{equation*} X^{(n)} \eqdist \xi^{(n)} \cdot \rho\left(\frac{1}{n}\sum_{i=1}^n \mathbf{r}(\xi_i) \right), \quad n\in \NN,
\end{equation*}
where $\xi^{(n)} := (\xi_1, \dots, \xi_n)$.  Let 
$\Lambda$ denote the log moment-generating function (mgf) of $(\xi_1, \mathbf{r}(\xi_1))$: 
\begin{equation}\label{lambda} \Lambda(s_1, s_2) := \log \EE\left[\exp\left(s_1 \xi_1 + s_2 \mathbf{r}(\xi_1)  \right)\right], \quad s_1 \in \RR, s_2 \in \RR. 
\end{equation}

\item \textsc{Log Moment-Generating Function (MGF):}
There exists  $q_\star > 0$ such that for  every $(s_1,s_2) \in D_\Lambda,$ 
there exists a finite constant $C_{s_2}$  (depending only on $s_2$ and not $s_1$)  such that 
\begin{equation}\label{lambd}
\Lambda(s_1,s_2) \le C_{s_2} (1 + |s_1|^{q_\star}).  
\end{equation}
Furthermore,   there  exists $T \le \infty$ such that $D_\Lambda  = \RR\times (-\infty, T)$. 

\item \textsc{Integrated Log Mgf:} For any $\nu \in \cP(\RR^k)$, the function $\Psi_\nu:\RR^{k+1} \ra \RR$ obtained
  as an integrated form of the log mgf,
  \begin{equation}\label{psi} \Psi_\nu(t_1,t_2) := \int_{\RR^k} \Lambda\left(\langle t_1, x\rangle, t_2  \right) \nu(dx), \quad t_1 \in \RR^k, t_2 \in \RR,
  \end{equation}
contains 0 in the interior of its domain, is lower semicontinuous, and is essentially smooth.

\item \textsc{Log MGF of Square:} The log mgf $\newlambda$ of $(\xi_1^2, \mathbf{r} (\xi_1))$, given by 
     \begin{equation}
       \label{cond-annealed}
       \newlambda (s_1, s_2) := \log \EE [ \exp (s_1 \xi_1^2 + s_2 \mathbf{r}(\xi_1))], \qquad 
       s_1 \in \RR, \, s_2 \in \RR,
     \end{equation} 
     is finite in a non-empty neighborhood of the origin $(0, 0)$. 
     
\item \textsc{Tail Bound:} The exponent $q_\star$ of part (ii) is bounded above, $q_\star < 2$.
 
\end{enumerate}

\end{assumption}

\begin{remark}
  \label{rem-quenched}
  The inequality \eqref{lambd} in Assumption \ref{as-quenched}(ii) implies that  
  for $t_1\in \RR^k$ and $t_2 \in (-\infty,T)$, the map $\nu \mapsto \Psi_\nu(t_1,t_2)$
    is continuous with respect to the $q_\star$-Wasserstein topology.  Further, for $t_2 > T$, 
  we have  $\Lambda (y,t_2) = \infty$ for all $y \in \RR$, and hence,
    \eqref{psi} shows that $\Psi(t_1,t_2) =  \infty$ for all $t_1 \in \RR$. 
    \end{remark}

\begin{remark}
  \label{rem-asq}
  A wide class of product measures satisfy Assumption \ref{as-quenched} with $\rho \equiv \mathbf{r} \equiv 1$; namely those that have sufficiently light tails,  in the sense of parts (iv) and (v).  Examples of  sequences of non-product measures
  satisfying Assumption \ref{as-quenched} are   $\ell_p^n$ spheres.  
  More precisely, fix $p  \in [1,\infty)$, and  for $n \in \NN$, let 
  $\mathbb{D}_{n,p}  := \{  x  \in \RR^n:  \sum_{i=1}^n  |x_i|^p  = n \}$ be the  scaled $\ell_p^n$ ball in $\RR^n$,
 let  $\mathbb{S}_{n,p} := \partial \mathbb{D}_{n,p}$ be the scaled $\ell_p^n$ sphere in $\RR^n$, let 
  $\cone_{n,p}$ be the cone measure  on $\mathbb{S}_{n,p}$: for Borel  subsets $S \subset \mathbb{S}_{n,p}$, 
  \[ \cone_{n,p} (S) := \frac{{\rm vol}_n (\{cx: x \in S, c \in [0,n^{1/p}]\})}{{\rm vol}_n (\mathbb{D}_{n,p})}, \]
  with ${\rm vol}_n$ denoting Lebesgue  measure on $\RR^n$, and let 
  $X^{(n)} = X^{(n,p)}$ be distributed according to $\cone_{n,p}$.
    Then:
   \begin{enumerate}[label=(\roman*)]
\item for $p \in [1, \infty)$, this condition follows from 
  results in \cite{SchZin90,RacRus91}, with  $\{\xi_j\}_{j\in \NN}$ being the
  i.i.d.\ sequence  with common law equal to the generalized $p$-normal distribution (namely, the probability measure on $\RR$ with density proportional to $e^{-|y|^p/p}$), $\mathbf{r}(x) = |x|^p$, and $\rho(y) = y^{-1/p}$;
  
\item for $p \in (1, \infty)$, the growth conditions on the log mgf $\Lambda$ are established in \cite[Lemma 5.7]{GanKimRam17}; further,
  $\Lambda$ is symmetric in its first argument due to the symmetry of the generalized $p$-normal distribution; 

 \item for $p \in (1, \infty)$, the conditions on the integrated log mgf are established in \cite[Lemma 5.9]{GanKimRam17};
 
 \item for $p \in [2, \infty)$, the log mgf condition is easily verified; 
  
 \item for $p \in (2, \infty)$, the precise tail bound exponent is established in \cite[Lemma 5.5]{GanKimRam17}.

\end{enumerate}
\end{remark}

We now our state our first  result, whose proof is deferred to Section \ref{sec-quenched}.  Recall the 
 $q$-Wasserstein metric $\mathcal{W}_q$ specified in Definition \ref{def-Wasserstein}.
 Also, for any $\nu \in \cP(\RR^k)$, we  let $\Psi_\nu^*$ denote the Legendre transform of $\Psi_\nu$,
 \begin{equation}\label{psistar} \Psi_\nu^*(\tau_1,\tau_2) := \sup_{t_1\in \RR^k, t_2\in \RR} \{ \langle t_1, \tau_1\rangle + t_2 \tau_2 - \Psi_\nu(t_1,t_2) \}, \quad \tau_1\in\RR^k, \tau_2\in \RR.
 \end{equation}
 Also, let $\gamma$ denote the standard Gaussian distribution on $\RR$, and $\gamma^{\otimes k}$ its $k$-fold product.

 \begin{theorem}[Quenched]\label{th-quenched}
   Fix $k \in \NN$, and suppose $\{X^{(n)}\}_{n \in \NN}$ satisfies Assumption \ref{as-quenched}(i ii, iii) with associated $q_\star > 0$ and $\Psi_\nu$.  Choose any sequence $\mathbf{a} = \{\mathbf{a}_{n,k}\}_{n\in\NN_k} \subset \vnk$ such that, for some $\nu \in \cP(\RR^k)$, 
\begin{equation}\label{eq-qwass}
  \mathcal{W}_{q_\star}(\mathsf{L}_{n,k}^{\mathbf{a}}, \nu) \to 0 
\end{equation}
where $\mathsf{L}_{n,k}^{\mathbf{a}} \in \cP(\RR^k)$ is the  empirical measure defined in \eqref{emp_L}. Then, the following claims hold: 
\begin{enumerate}[label=(\roman*)]
\item The sequence $\{n^{-1/2} \mathbf{a}_{n,k}^\intercal  X^{(n)}\}_{n\in\NN_k}$ satisfies an LDP in $\RR^k$  with GRF $\mathcal{J}_\nu^{\sf{qu}}:\RR^k \ra [0,\infty]$ defined by 
\begin{equation}
\label{def-jqu}
  \mathcal{J}_\nu^{\sf{qu}}(x) := \inf_{\tau \in \RR_+} \Psi_\nu^*\left(\tfrac{x}{\rho(\tau)},\tau\right) ,\quad x\in \RR^k. 
\end{equation}

\item If $\sigma$ is any probability measure on $\mathbb{S} := \otimes_{n > k} \mathbb{V}_{n,k}$ whose  
  $n$-th marginal coincides with the Haar measure $\sigma_{n,k}$,  then for $\sigma$-a.e. $\mathbf{a}  = \{\mathbf{a}_{n,k}\}_{n \in \NN_k} \in \mathbb{S}$,
   the sequence
  $\{n^{-1/2} \mathbf{a}_{n,k}^\intercal  X^{(n)}\}_{n\in\NN_k}$ satisfies an LDP in $\RR^k$  with GRF $\mathcal{J}_{\gamma^{\otimes k}}^{\sf{qu}}$.

\item Let $U$ be a uniformly distributed random variable on $[0, 1]$, independent of $\{X^{(n)}\}_{n\in\NN}$. If the log mgf $\Lambda$ of \eqref{lambda} is symmetric in its first argument, then the claims (i) and (ii) also hold for the  sequence
  $\{n^{-1/2} \mathbf{a}_{n,k}^\intercal X^{(n)}U^{1/n}\}_{n\in\NN_k}$.
\end{enumerate}
\end{theorem}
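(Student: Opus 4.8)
The plan is to use the product representation in Assumption~\ref{as-quenched}(i) to express each projection as a continuous function of a weighted empirical mean, prove a large deviation principle for that mean via the G\"artner--Ellis theorem, and transfer it through the contraction principle; parts~(ii) and~(iii) then follow from, respectively, an almost-sure identification of the limiting empirical measure and a short convex-duality computation. For part~(i), write $v_{n,i}:=\sqrt{n}\,\mathbf{a}_{n,k}(i,\cdot)\in\RR^k$, so that $\lank=\tfrac1n\sum_{i=1}^n\delta_{v_{n,i}}$, and use Assumption~\ref{as-quenched}(i) to obtain
\begin{equation*}
  n^{-1/2}\mathbf{a}_{n,k}^\intercal X^{(n)}\ \eqdist\ \rho\Bigl(\tfrac1n\sum_{i=1}^n\mathbf{r}(\xi_i)\Bigr)\cdot\tfrac1n\sum_{i=1}^n\xi_i\,v_{n,i}.
\end{equation*}
It therefore suffices to prove an LDP for the pair $S_n:=\bigl(\tfrac1n\sum_i\xi_i v_{n,i},\,\tfrac1n\sum_i\mathbf{r}(\xi_i)\bigr)\in\RR^k\times\RR_+$ and push it forward through the continuous map $g(y,\tau):=\rho(\tau)\,y$. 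By independence of the $\xi_i$, the scaled logarithmic moment generating function of $S_n$ at $(t_1,t_2)\in\RR^k\times\RR$ equals $\tfrac1n\sum_{i=1}^n\Lambda(\langle t_1,v_{n,i}\rangle,t_2)=\int_{\RR^k}\Lambda(\langle t_1,x\rangle,t_2)\,\lank(dx)$; since $\mathcal{W}_{q_\star}(\lank,\nu)\to0$, Remark~\ref{rem-quenched} shows this converges to $\Psi_\nu(t_1,t_2)$ for $t_2<T$, while both sides equal $+\infty$ for $t_2\ge T$, so the limiting generating function is $\Psi_\nu$. By Assumption~\ref{as-quenched}(iii), $\Psi_\nu$ is lower semicontinuous, essentially smooth and finite on a neighborhood of the origin, so the G\"artner--Ellis theorem gives an LDP for $\{S_n\}$ with good rate function $\Psi_\nu^*$, and the contraction principle applied to $g$ then yields the LDP for $\{n^{-1/2}\mathbf{a}_{n,k}^\intercal X^{(n)}\}$ with good rate function $\mathcal{J}_\nu^{\sf{qu}}$ as in~\eqref{def-jqu}.

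For part~(ii), by part~(i) applied with $\nu=\gamma^{\otimes k}$ it suffices to show that, for any probability measure $\sigma$ on $\mathbb{S}$ whose $n$-th marginal is $\sigma_{n,k}$, one has $\mathcal{W}_{q_\star}(\mathsf{L}^{\mathbf{A}}_{n,k},\gamma^{\otimes k})\to0$ for $\sigma$-a.e.\ $\mathbf{a}$. By Theorem~\ref{th-stldp}, $\{\mathsf{L}^{\mathbf{A}}_{n,k}\}$ satisfies an LDP in the $q$-Wasserstein topology for every $q\in(0,2)$, with a good rate function $I$ minimized only at $\gamma^{\otimes k}$ (the latter being the limit in probability, by the classical asymptotic Gaussianity of the rows of a Haar-distributed frame); taking $q=q_\star$, goodness and lower semicontinuity force $\inf\{I(\mu):\mathcal{W}_{q_\star}(\mu,\gamma^{\otimes k})\ge\varepsilon\}>0$ for each $\varepsilon>0$, so the LDP upper bound gives exponential decay of $\sigma_{n,k}\bigl(\mathcal{W}_{q_\star}(\mathsf{L}^{\mathbf{A}}_{n,k},\gamma^{\otimes k})\ge\varepsilon\bigr)$. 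Since this bound involves only the marginals, the first Borel--Cantelli lemma gives, $\sigma$-a.s., $\mathcal{W}_{q_\star}(\mathsf{L}^{\mathbf{A}}_{n,k},\gamma^{\otimes k})<\varepsilon$ for all large $n$, for every such $\sigma$; letting $\varepsilon\downarrow0$ completes part~(ii). (Alternatively, the convergence follows directly from the Gaussian/polar representation $\mathbf{A}_{n,k}=G_n(G_n^\intercal G_n)^{-1/2}$ with $G_n$ having i.i.d.\ $N(0,1)$ entries, using $n^{-1}G_n^\intercal G_n\to I_k$ a.s.\ and the strong law of large numbers applied to the i.i.d.\ rows of $G_n$.)

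For part~(iii), assume $\Lambda$ is even in its first argument and set $V_n:=\tfrac1n\log U$, which is independent of $\{\xi_j\}$ and, since $-nV_n$ is standard exponential, satisfies an LDP on $(-\infty,0]$ with good rate function $I_U(v)=-v$. With $A_n:=\tfrac1n\sum_i\mathbf{r}(\xi_i)$ one has $n^{-1/2}\mathbf{a}_{n,k}^\intercal X^{(n)}U^{1/n}\eqdist e^{V_n}\rho(A_n)\cdot\tfrac1n\sum_i\xi_i v_{n,i}$, so by part~(i), the LDP for $V_n$, and independence (which yields a joint LDP with the sum of the rate functions), the triple $\bigl(\tfrac1n\sum_i\xi_i v_{n,i},A_n,V_n\bigr)$ satisfies an LDP with good rate function $(y,\tau,v)\mapsto\Psi_\nu^*(y,\tau)+I_U(v)$. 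The contraction principle applied to $(y,\tau,v)\mapsto e^v\rho(\tau)y$ then gives an LDP for $\{n^{-1/2}\mathbf{a}_{n,k}^\intercal X^{(n)}U^{1/n}\}$ with good rate function $\widetilde{\mathcal{J}}_\nu(x)=\inf_{\tau\in\RR_+,\,v\le0}\{\Psi_\nu^*(x/(e^v\rho(\tau)),\tau)-v\}$. Choosing $v=0$ shows $\widetilde{\mathcal{J}}_\nu\le\mathcal{J}_\nu^{\sf{qu}}$; for the reverse inequality, evenness and convexity of $\Lambda(\cdot,t_2)$ make $\Psi_\nu(\cdot,t_2)$ even and convex, whence $\Psi_\nu(\lambda t_1,t_2)\le\Psi_\nu(t_1,t_2)$ for $\lambda\in[0,1]$, so substituting $t_1\mapsto\lambda t_1$ in~\eqref{psistar} gives $\Psi_\nu^*(\tau_1/\lambda,\tau_2)\ge\Psi_\nu^*(\tau_1,\tau_2)$ for $\lambda\in(0,1]$; applying this with $\lambda=e^v$ and $\tau_1=x/\rho(\tau)$ gives $\Psi_\nu^*(x/(e^v\rho(\tau)),\tau)-v\ge\Psi_\nu^*(x/\rho(\tau),\tau)\ge\mathcal{J}_\nu^{\sf{qu}}(x)$. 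Hence $\widetilde{\mathcal{J}}_\nu=\mathcal{J}_\nu^{\sf{qu}}$, and the analogous argument combined with part~(ii) yields the Haar statement.

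The conceptual skeleton --- G\"artner--Ellis plus contraction --- is routine once the convergence of the logarithmic moment generating functions to $\Psi_\nu$ is known, which is exactly what Remark~\ref{rem-quenched} (i.e., the growth bound~\eqref{lambd}) supplies. I expect the genuine difficulties to be: (a) the careful treatment of the boundary $t_2=T$ of $D_{\Psi_\nu}$ and of the contraction maps near $\tau=0$, where $\rho$ may be undefined or infinite although $\Psi_\nu^*=+\infty$ there, so that the maps are continuous precisely on the set where the rate functions are finite; and (b) in part~(ii), the passage from convergence in probability to $\sigma$-almost-sure $q_\star$-Wasserstein convergence of $\{\mathsf{L}^{\mathbf{A}}_{n,k}\}$, which is where Theorem~\ref{th-stldp} (the Stiefel LDP, of independent interest) does the essential work --- with the caveat that it is stated for $q<2$, so the borderline case $q_\star\ge2$ requires the direct second-moment estimate instead.
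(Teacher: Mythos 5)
Your proof is correct and follows essentially the same route as the paper: in part (i) your pair $S_n$ is exactly the paper's $R_{\mathbf{a}}^{(n)}$ and the G\"artner--Ellis-plus-contraction argument is identical, while part (ii) matches the paper's appeal to Theorem~\ref{th-stldp} and Borel--Cantelli (packaged as Corollary~\ref{slln}). In part (iii) the paper factors the argument through Lemmas~\ref{lem-monotone} and~\ref{lem-u1n}, proving scaling-monotonicity of $\mathcal{J}_\nu^{\sf{qu}}$ directly from symmetry and convexity of each $\Psi_\nu^*(\cdot/\rho(\tau),\tau)$, whereas you obtain the equivalent inequality $\Psi_\nu^*(\tau_1/\lambda,\tau_2)\ge\Psi_\nu^*(\tau_1,\tau_2)$ dually from $\Psi_\nu(\lambda t_1,t_2)\le\Psi_\nu(t_1,t_2)$ and then contract the joint LDP with $U^{1/n}$ in place --- the same content expressed through a Legendre-dual computation.
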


 \begin{remark}
   Claim (iii) of Theorem \ref{th-quenched} is motivated by the observation that
if $X^{(n, p)}$ is distributed according to the cone measure on the scaled $\ell_p^n$ sphere $\mathbb{S}_{n,p}$,
   then the random variable $U^{1/n} X^{(n,p)}$ is uniformly distributed on the scaled $\ell_p^n$ ball $\mathbb{D}_{n,p}$
   \cite{SchZin90}.  Since,  
   as noted in Remark \ref{rem-asq}, $X^{(n, p)}$ satisfies Assumption \ref{as-quenched} across a wide range of $p$ (with symmetric
   $\Lambda$), 
   Theorem \ref{th-quenched}(iii) allows an extension of the LDP results in (i) and (ii) of Theorem \ref{th-quenched} 
   from $\ell_p^n$ spheres to $\ell_p^n$ balls, which are of greater interest in convex geometry. 
   \end{remark}

Note that the  rate function $\mathcal{J}_\nu^{\sf{qu}}$ depends only on the limit $\nu$ in \eqref{eq-qwass},  
and is  insensitive to further specifics of the  projection matrix
 sequence $\mathbf{a}$.
 For one-dimensional projections ($k=1$),  Theorem \ref{th-quenched} recovers  both Theorem 2 of \cite{GanKimRam16}, which addresses 
 the case where $X^{(n)}$ has a  product distribution,  
 and Theorem 2.5 and Proposition 5.3 of \cite{GanKimRam17}, which consider the case when 
 $X^{(n)}$ is  uniformly distributed on $\mathbb{D}_{n,p}$  or according to the cone measure $\eta_{n,p}$   (as defined in Remark \ref{rem-quenched}). 
    One setting of multidimensional projections ($k > 1$) considered prior to the above result 
   is the LDP  for  the  projection
   of $X^{(n)}$ onto the first $k$ canonical directions, where $\mathbf{a}_{n,k}$ is  the  matrix of 1s on the diagonal and 0s elsewhere, which 
   does not satisfy \eqref{eq-qwass}. 
    More recent work \cite{KabProTha19} 
       establishes interesting asymptotics (law of large numbers and LDPs) for the shape of multidimensional 
       projections of the uniform distribution on a cube or discrete cube.
 This paper  differs by establishing  almost everywhere quenched LDP results,  first reported in the PhD thesis  \cite{skim-thesis}, 
   for multidimensional projections beyond the particular cases of the canonical projection and
    product measures. 
       Our results provide a potential 
     starting point for obtaining  asymptotic results for shapes and instrinsic volumes
     of projections of non-product measures such as $\ell_p^n$ balls, as well as 
      for ongoing work on sharp quenched large deviation estimates for multi-dimensional projections and their norms, which are 
       relevant for understanding volumetric properties of convex bodies and  their
       intersections.

   Our  second  main result concerns a variational representation of the annealed rate function for the sequence of  random multi-dimensional
   projections $\{\mathbf{A}_{n,k}^\intercal X^{(n)}\}_{n \in\NN_k}$.
   We start by stating an annealed LDP counterpart to Theorem \ref{th-quenched}, specialized to the setting considered in this article.

 \begin{theorem}[Annealed]  
     \label{th-annealed}
     Consider a  sequence $\{X^{(n)}\}_{n\in \NN}$ that satisfies
     Assumption \ref{as-quenched}(i, iv) with associated 
      $\{\xi_j\}_{j \in \NN}$,  $\rho$, and $\newlambda$.
     Then, for any $k \in \NN$, $\{n^{-1/2} \mathbf{A}_{n,k}^\intercal X^{(n)}\}_{n\in\NN_k}$ satisfies an LDP in $\RR^k$ with GRF $\mathcal{J}^{\sf{an}}: \RR^k \ra [0, \infty]$ defined by 
\begin{equation*}
  \mathcal{J}^{\sf{an}}(x) := \inf_{c > 0} \left\{J_X\left(\tfrac{\|x\|_2}{c}\right)  -\tfrac{1}{2} \log(1- c^2) \right\}, \quad x\in \RR^k, 
\end{equation*}
where $J_X$ is given, in terms of the  Legendre transform  $\newlambda^*$  of $\newlambda$, by 
\begin{equation*}
  J_X (x)  := \inf_{(t_1,t_2) \in  \RR_+^2} \left\{ \newlambda^*(t_1,t_2):   x = t_1^{1/2} \rho(t_2)\right\} =
  \inf_{t_2 > 0} \newlambda^* \left(\frac{x^2}{\rho^2(t_2)}, t_2 \right). 
  \end{equation*} 
\end{theorem}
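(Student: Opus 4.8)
The plan is to use the rotational invariance of the Haar measure $\sigma_{n,k}$ to reduce $n^{-1/2}\mathbf{A}_{n,k}^\intercal X^{(n)}$ to an explicit product of three mutually independent pieces — a radial part built from $X^{(n)}$, a Beta-distributed ``depletion'' factor coming from the geometry of the sphere, and a uniformly random direction — and then to glue together the (elementary) LDPs of these pieces via the contraction principle.

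\textbf{Step 1 (reduction).} Since $\mathbf{A}_{n,k}$ is independent of $X^{(n)}$ and $\sigma_{n,k}$ is $\mathcal{O}(n)$-invariant (under $A\mapsto OA$), conditioning on $X^{(n)}$ and rotating it onto a fixed coordinate axis shows that $n^{-1/2}\mathbf{A}_{n,k}^\intercal X^{(n)} \eqdist n^{-1/2}\|X^{(n)}\|_2 \cdot R_{n,k}$, where $R_{n,k}$ is independent of $X^{(n)}$ and has the law of the first row of a Haar-distributed element of $\vnk$. Realizing $\vnk$ as the first $k$ columns of a Haar orthogonal matrix identifies $R_{n,k}$ with the first $k$ coordinates of a uniform point on $\mathbb{S}^{n-1}$, and the polar decomposition of the uniform measure on the sphere gives $R_{n,k} \eqdist \sqrt{\beta_{n,k}}\,\Phi_k$, where $\beta_{n,k}\sim\mathrm{Beta}(k/2,(n-k)/2)$, $\Phi_k$ is uniform on $\mathbb{S}^{k-1}$, and $\beta_{n,k},\Phi_k$ are independent of each other and of $X^{(n)}$. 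Finally, Assumption \ref{as-quenched}(i) gives $n^{-1/2}\|X^{(n)}\|_2 \eqdist W_n := \rho\!\big(\tfrac1n\sum_{i=1}^n\mathbf{r}(\xi_i)\big)\big(\tfrac1n\sum_{i=1}^n\xi_i^2\big)^{1/2}$. Hence $n^{-1/2}\mathbf{A}_{n,k}^\intercal X^{(n)} \eqdist W_n\sqrt{\beta_{n,k}}\,\Phi_k$ with $W_n$, $\beta_{n,k}$, $\Phi_k$ mutually independent.

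\textbf{Step 2 (the three building-block LDPs).} By Cram\'er's theorem applied to the i.i.d.\ sequence $\{(\xi_i^2,\mathbf{r}(\xi_i))\}_i$ — whose joint log-mgf is $\newlambda$, finite near the origin by Assumption \ref{as-quenched}(iv) — the averages $(\tfrac1n\sum\xi_i^2,\tfrac1n\sum\mathbf{r}(\xi_i))$ satisfy an LDP in $\RR_+^2$ with good rate function $\newlambda^*$; applying the contraction principle with the continuous map $(u,v)\mapsto\rho(v)u^{1/2}$ yields an LDP for $\{W_n\}$ with good rate function equal to the $J_X$ displayed in the statement. Next, a Stirling estimate on the Beta normalizing constant and density shows that $\{\beta_{n,k}\}$ satisfies an LDP at speed $n$ with good rate function $b\mapsto-\tfrac12\log(1-b)$ on $[0,1)$ (and $+\infty$ otherwise) — this is the origin of the entropy term in $\mathcal{J}^{\sf{an}}$; equivalently one may write $\beta_{n,k}=\tfrac1n\chi_k^2/(\tfrac1n\chi_k^2+\tfrac1n\chi_{n-k}^2)$ and combine the (elementary) LDP for $\tfrac1n\chi_k^2$, which has rate $\alpha\mapsto\alpha/2$, with Cram\'er's LDP for $\tfrac1n\chi_{n-k}^2$. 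Finally, the law of $\Phi_k$ does not depend on $n$, so $\{\Phi_k\}$ trivially satisfies an LDP with rate $0$ on the compact set $\mathbb{S}^{k-1}$.

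\textbf{Step 3 (assembly).} As the three sequences are independent and their rate functions are good (hence the sequences are exponentially tight), $(W_n,\beta_{n,k},\Phi_k)$ satisfies an LDP with good rate function $(w,b,\phi)\mapsto J_X(w)-\tfrac12\log(1-b)$. Applying the contraction principle with the continuous map $(w,b,\phi)\mapsto w\sqrt{b}\,\phi$ on $\RR_+\times[0,1]\times\mathbb{S}^{k-1}$ gives an LDP for $\{n^{-1/2}\mathbf{A}_{n,k}^\intercal X^{(n)}\}$; for $x\neq 0$ the unique preimage has $\phi=x/\|x\|_2$ and $w\sqrt{b}=\|x\|_2$, so the rate at $x$ equals $\inf_{b\in(0,1)}\{J_X(\|x\|_2/\sqrt{b})-\tfrac12\log(1-b)\}$, which is $\mathcal{J}^{\sf{an}}(x)$ after the substitution $c=\sqrt{b}$ (the case $x=0$ reduces to $J_X(0)$ on both sides).

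\textbf{Main obstacle.} The conceptually and technically decisive point is the Beta-LDP in Step 2: since $\beta_{n,k}\to 0$, a boundary point of its support, the rate function is genuinely one-sided and the off-the-shelf Cram\'er/G\"artner--Ellis results do not apply directly; one must read the sharp logarithmic rate $-\tfrac12\log(1-b)$ off the Beta density (via Stirling) and verify matching exponential upper and lower bounds, taking care near $b=0$ and $b=1$. The remaining care is routine: checking that the two contraction maps are continuous on the relevant effective domains (in particular the behavior of $\rho$ near $0$, where for the examples of interest $\newlambda^*$ is already $+\infty$), and justifying the superposition of the independent LDPs — both immediate given that all rate functions in play are good. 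Alternatively, Theorem \ref{th-annealed} can be obtained by checking that the present hypotheses imply those of the annealed LDP of \cite{KimLiaRam20} and rewriting its rate function in the stated form.
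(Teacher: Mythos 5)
Your proposal is correct in substance, but it takes a genuinely different route from the paper. The paper's proof is essentially a citation: it invokes the annealed LDP of \cite[Theorem 2.7]{KimLiaRam20}, whose only hypothesis to verify (Assumption A* therein, at speed $n$) is that $\{\|X^{(n)}\|_2/\sqrt{n}\}_{n\in\NN}$ satisfies an LDP with GRF $J_X$; this is checked exactly as in your radial step, via Cram\'er's theorem for $\left(\tfrac1n\sum_i\xi_i^2,\tfrac1n\sum_i\mathbf{r}(\xi_i)\right)$ (using Assumption \ref{as-quenched}(iv)) and the contraction principle through $(u,v)\mapsto u^{1/2}\rho(v)$. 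You instead reprove the projective half of \cite{KimLiaRam20} from scratch: the rotational-invariance reduction to $W_n\sqrt{\beta_{n,k}}\,\Phi_k$, the Beta (first-row-of-Haar) LDP with rate $b\mapsto-\tfrac12\log(1-b)$, and the assembly by independence plus contraction. Your route is self-contained and makes the geometric origin of the $-\tfrac12\log(1-c^2)$ term transparent; the paper's route is shorter because it outsources precisely that part. Note that your ``main obstacle'' can also be discharged by citation: Theorem 3.4 of \cite{BarGamLozRou10} (which the paper itself uses in the proof of Theorem \ref{th-var}) gives the LDP for the first row $\mathbf{A}_{n,k}(1,\cdot)$ with rate $a\mapsto-\tfrac12\log(1-\|a\|_2^2)$, from which your Beta LDP follows by contraction; your Stirling or chi-square-ratio derivations are fine as well, and the superposition of the three independent LDPs is unproblematic since two of the three factors live in compact spaces.

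The one genuine inaccuracy is your parenthetical claim about $x=0$. The preimage of $0$ under $(w,b,\phi)\mapsto w\sqrt{b}\,\phi$ is $\{w=0\}\cup\{b=0\}$, so the contraction principle yields at the origin the value $\inf_{w\ge0}J_X(w)=0$ (the infimum of the GRF of a full LDP is zero), not $J_X(0)$. This value $0$ is in fact forced: $n^{-1/2}\mathbf{A}_{n,k}^\intercal X^{(n)}\to 0$ in probability and a good rate function is lower semicontinuous, whereas the displayed formula for $\mathcal{J}^{\sf{an}}$ returns $J_X(0)$ at $x=0$, which is typically strictly positive (for instance $+\infty$ when $X^{(n)}$ is a standard Gaussian vector with $\rho\equiv\mathbf{r}\equiv1$, where the projection is exactly $n^{-1/2}$ times a standard Gaussian in $\RR^k$ and the true rate at the origin is $0$). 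So you should not assert agreement at $x=0$: either record the identification of your contraction rate with the displayed $\mathcal{J}^{\sf{an}}$ for $x\ne0$ and treat the origin separately (equivalently, parametrize the contraction by the radial variable $r$, with $c=\|x\|_2/r$, which at $x=0$ returns $\inf_{r\ge0}J_X(r)=0$), or state that the closed form is to be read as its lower semicontinuous regularization. This is a one-point bookkeeping discrepancy with the stated formula rather than a flaw in your method, but as written the parenthetical is false and needs the fix above.
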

\begin{proof} 
  It follows from  Theorem 2.7 of \cite{KimLiaRam20}  that
  $\{n^{-1/2} \mathbf{A}_{n,k}^\intercal X^{(n)}\}_{n\in\NN_k}$ satisfies an 
  LDP in $\RR^k$ with GRF  $\mathcal{J}^{\sf{an}}$ as defined above 
  whenever Assumption A*  therein  is satisfied with speed $s_n = n$,
  namely, when the sequence of scaled norms $\{\|X^{(n)}\|_2/\sqrt{n}\}_{n\in\NN}$ satisfies an LDP with GRF  $J_X$.  Since
 the domain of  $\newlambda$ contains a neighborhood of the origin due to 
 Assumption \ref{as-quenched}(iv),    Cram\'{e}r's theorem
 implies that  the sequence
 $\{\left( \frac{1}{n} \sum_{i=1}^n \xi_i^2, \frac{1}{n} \sum_{i=1}^n \mathbf{r}(\xi_i) \right)\}_{n \in \NN}$ 
 satisfies an LDP in $\RR^2$ with GRF $\newlambda^*$. Since
 Assumption \ref{as-quenched}(i) implies 
  \begin{equation*}\frac{\|X^{(n)}\|_2}{\sqrt{n}} \eqdist \left( \frac{1}{n}  \sum_{i=1}^n \xi_i^2 \right)^{1/2} \rho \left( \frac{1}{n}\sum_{i=1}^n \mathbf{r}(\xi_i) \right),
  \end{equation*}
  with $\rho$ continuous, 
  the contraction principle
  shows that the sequence of scaled norms $\{\|X^{(n)}\|_2/\sqrt{n}\}_{n\in\NN}$ satisfies an LDP with GRF  $J_X$.
 This  completes the proof.   
\end{proof}

To state the variational representation  for the rate function $\mathcal{J}^{\sf{an}}$,  
we first introduce some notation. 
 For $\nu, \mu \in \cP(\RR)$, define the \emph{relative entropy} of $\nu$ with respect to $\mu$ as
\begin{equation}\label{relent}
  H(\nu | \mu) := \int_{\RR} \log \left( \tfrac{d\nu}{d\mu}\right) d\nu 
\end{equation}
if $\nu \ll \mu$, and $H(\nu | \mu) := +\infty$ otherwise. Let $\gamma$ denote the standard Gaussian measure on $\RR$, and $\gamma^{\otimes k}$  the associated product measure on $\RR^k$. For $\nu\in\cP(\RR^k)$, let $\mathcal{C}:\cP(\RR^k)\ra \RR^{k\times k}$ denote the covariance map,
\begin{equation}\label{cdef}
\mathcal{C}(\nu) :=   \int_{\RR^k}[x\otimes x]\,\nu(dx)   , \quad \nu\in\cP(\RR^k).
\end{equation}
Let $I_k$ denote the $k\times k$ identity matrix, write $A \preceq B$ if $B-A$ is positive semidefinite, and define the rate function
\begin{equation}\label{hk}
  \mathbb{H}_k(\nu) := \left\{\begin{array}{ll}
H(\nu | \gamma^{\otimes k}) + \frac{1}{2} \textnormal{tr}(I_k - \mathcal{C}(\nu)) & \textnormal{ if } \mathcal{C}(\nu) \preceq I_k \\
+\infty & \textnormal{ else} 
\end{array}\right.,  \quad \nu\in\cP(\RR^k).
\end{equation}
  
\begin{theorem}\label{th-var}
  Fix $k\in\NN$, suppose that the sequence $\{X^{(n)}\}_{n\in\NN}$ satisfies Assumption \ref{as-quenched}. Let $\mathcal{J}_\nu^{\sf{qu}}$ and
 $\mathcal{J}^{\sf{an}}$ be defined as in Theorems \ref{th-quenched} and \ref{th-annealed}, respectively. 
  Then, we have the following variational formula: 
  \begin{equation}
    \label{formula-var}
  \mathcal{J}^{\sf{an}}(x) = \inf_{\nu \in \cP(\RR^k)} \left\{ \mathcal{J}_\nu^{\sf{qu}}(x) + \mathbb{H}_k(\nu)  \right\}, \qquad
  x\in \RR^k. 
\end{equation}
\end{theorem}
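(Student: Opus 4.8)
The plan is to obtain \eqref{formula-var} by recognizing the annealed rate as the exponential decay rate of an average, over the random Stiefel frame $\mathbf{A}_{n,k}$ (which is independent of $X^{(n)}$), of quenched projection probabilities, where a frame whose row empirical measure lies near $\nu$ is weighted by the cost $\mathbb{H}_k(\nu)$. The three inputs are: (a) the LDP for $\{\mathsf{L}^{\mathbf A}_{n,k}\}_{n\in\NN_k}$ in the $\mathcal{W}_{q_\star}$-topology with good rate function $\mathbb{H}_k$ --- Theorem \ref{th-stldp}, which applies because Assumption \ref{as-quenched}(v) gives $q_\star<2$; (b) the quenched LDP of Theorem \ref{th-quenched}(i), whose decisive feature is that $\mathcal{J}^{\sf{qu}}_\nu$ depends on the frame sequence only through the $\mathcal{W}_{q_\star}$-limit $\nu$ of its row empirical measures; and (c) the annealed LDP of Theorem \ref{th-annealed}, together with uniqueness of large-deviation rate functions. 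Concretely, I would first establish the \emph{joint} LDP for $\{(\mathsf{L}^{\mathbf A}_{n,k},\,n^{-1/2}\mathbf{A}_{n,k}^\intercal X^{(n)})\}_{n\in\NN_k}$ in $\cP_{q_\star}(\RR^k)\times\RR^k$ with rate function $(\nu,x)\mapsto\mathbb{H}_k(\nu)+\mathcal{J}^{\sf{qu}}_\nu(x)$, and then extract \eqref{formula-var} from its $\RR^k$-marginal.

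The first and main task is to upgrade Theorem \ref{th-quenched}(i) to estimates that are uniform over frames $a\in\vnk$ whose row empirical measure $\mathsf{L}^a_{n,k}$ (defined by \eqref{emp_L}, with the obvious meaning for a single frame) lies in a small $\mathcal{W}_{q_\star}$-ball $B_\epsilon(\nu)$: for every open $O\subseteq\RR^k$ and closed $C\subseteq\RR^k$,
\begin{align*}
  \liminf_{n\to\infty}\tfrac1n\log\,\inf_{a:\,\mathsf{L}^a_{n,k}\in B_\epsilon(\nu)}\PP\bigl(n^{-1/2}a^\intercal X^{(n)}\in O\bigr)
  &\ \ge\ -\sup_{\mu\in\overline{B_\epsilon(\nu)}}\,\inf_{x\in O}\mathcal{J}^{\sf{qu}}_\mu(x),\\
  \limsup_{n\to\infty}\tfrac1n\log\,\sup_{a:\,\mathsf{L}^a_{n,k}\in B_\epsilon(\nu)}\PP\bigl(n^{-1/2}a^\intercal X^{(n)}\in C\bigr)
  &\ \le\ -\inf_{\mu\in\overline{B_\epsilon(\nu)}}\,\inf_{x\in C}\mathcal{J}^{\sf{qu}}_\mu(x).
\end{align*}
These should follow by inspecting the proof of Theorem \ref{th-quenched}: that argument depends on the frame only through the normalized cumulant generating functions $\tfrac1n\sum_{i=1}^n\Lambda(\langle t_1,\sqrt n\,a(i,\cdot)\rangle,t_2)=\Psi_{\mathsf{L}^a_{n,k}}(t_1,t_2)$, and the growth bound \eqref{lambd} (see Remark \ref{rem-quenched}) makes $\nu\mapsto\Psi_\nu$ --- hence $\Psi_\nu^*$ and $\mathcal{J}^{\sf{qu}}_\nu$ --- continuous on $\cP_{q_\star}(\RR^k)$, uniformly on $\mathcal{W}_{q_\star}$-bounded sets, so that the right-hand sides above tend to $\inf_{x\in O}\mathcal{J}^{\sf{qu}}_\nu(x)$, resp.\ $\inf_{x\in C}\mathcal{J}^{\sf{qu}}_\nu(x)$, as $\epsilon\downarrow0$. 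The same continuity, together with coercivity of $\Psi_\nu^*$ (which confines the auxiliary variable $\tau$ in \eqref{def-jqu} to a bounded range), also shows that $(\nu,x)\mapsto\mathcal{J}^{\sf{qu}}_\nu(x)$ is jointly lower semicontinuous.

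Next, using independence of $\mathbf{A}_{n,k}$ and $X^{(n)}$, for closed $C$ and open $O$ one has
\begin{align*}
 \PP\bigl(\mathsf{L}^{\mathbf A}_{n,k}\in B_\epsilon(\nu),\ n^{-1/2}\mathbf{A}_{n,k}^\intercal X^{(n)}\in C\bigr)
   &\le \PP\bigl(\mathsf{L}^{\mathbf A}_{n,k}\in B_\epsilon(\nu)\bigr)\,\sup_{a:\,\mathsf{L}^a_{n,k}\in B_\epsilon(\nu)}\PP\bigl(n^{-1/2}a^\intercal X^{(n)}\in C\bigr),\\
 \PP\bigl(\mathsf{L}^{\mathbf A}_{n,k}\in B_\epsilon(\nu),\ n^{-1/2}\mathbf{A}_{n,k}^\intercal X^{(n)}\in O\bigr)
   &\ge \PP\bigl(\mathsf{L}^{\mathbf A}_{n,k}\in B_\epsilon(\nu)\bigr)\,\inf_{a:\,\mathsf{L}^a_{n,k}\in B_\epsilon(\nu)}\PP\bigl(n^{-1/2}a^\intercal X^{(n)}\in O\bigr).
\end{align*}
Combining this with Theorem \ref{th-stldp} (which governs $\PP(\mathsf{L}^{\mathbf A}_{n,k}\in B_\epsilon(\nu))$) and the uniform estimates of the previous step gives matching exponential bounds on the product sets $B_\epsilon(\nu)\times C$ and $B_\epsilon(\nu)\times O$, with exponents converging as $\epsilon\downarrow0$ to $\mathbb{H}_k(\nu)+\inf_{x\in C}\mathcal{J}^{\sf{qu}}_\nu(x)$, resp.\ $\mathbb{H}_k(\nu)+\inf_{x\in O}\mathcal{J}^{\sf{qu}}_\nu(x)$; extending the upper bound to arbitrary closed subsets of $\cP_{q_\star}(\RR^k)\times\RR^k$ by the standard finite-subcover argument (valid since $\mathbb{H}_k$ is good, giving exponential tightness) completes the joint LDP. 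Applying the joint bounds to the cylinder sets $\cP_{q_\star}(\RR^k)\times C$ and $\cP_{q_\star}(\RR^k)\times O$ then shows that $\{n^{-1/2}\mathbf{A}_{n,k}^\intercal X^{(n)}\}_{n\in\NN_k}$ satisfies an LDP with rate function $\mathcal{K}(x):=\inf_{\nu\in\cP(\RR^k)}\{\mathcal{J}^{\sf{qu}}_\nu(x)+\mathbb{H}_k(\nu)\}$. This $\mathcal{K}$ is lower semicontinuous: since $\mathcal{J}^{\sf{qu}}_\nu\ge0$, any $\nu$ nearly attaining the infimum at $x$ satisfies $\mathbb{H}_k(\nu)\le\mathcal{K}(x)+1$ and so lies in a compact sublevel set of $\mathbb{H}_k$, whence the infimum reduces to one over a compact set of the jointly lower semicontinuous map $(\nu,x)\mapsto\mathbb{H}_k(\nu)+\mathcal{J}^{\sf{qu}}_\nu(x)$. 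On the other hand, Theorem \ref{th-annealed} asserts the same sequence satisfies an LDP with the good (hence lower semicontinuous) rate function $\mathcal{J}^{\sf{an}}$. Since lower semicontinuous rate functions of an LDP on $\RR^k$ are unique, $\mathcal{K}\equiv\mathcal{J}^{\sf{an}}$, which is exactly \eqref{formula-var}.

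The principal obstacle is the uniform quenched estimate in the second paragraph: Theorem \ref{th-quenched} is stated for a single convergent frame sequence, whereas here one must condition on the event $\{\mathsf{L}^{\mathbf A}_{n,k}\in B_\epsilon(\nu)\}$ and therefore needs bounds holding simultaneously for all frames with row empirical measure in that ball, so that the conditioning decouples cleanly into a factor controlled by Theorem \ref{th-stldp} and a quenched factor; the key enabler is the uniform $\mathcal{W}_{q_\star}$-continuity of $\nu\mapsto\Psi_\nu$ afforded by \eqref{lambd}. The remaining technical points --- goodness of $\mathbb{H}_k$ (for the finite-cover step), joint lower semicontinuity of $(\nu,x)\mapsto\mathcal{J}^{\sf{qu}}_\nu(x)$, and boundedness of the effective range of $\tau$ in \eqref{def-jqu} --- are comparatively routine consequences of the coercivity built into Assumption \ref{as-quenched}.
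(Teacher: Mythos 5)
Your route is genuinely different from the paper's, but the crux — the pair of $\epsilon$-uniform quenched estimates in your second paragraph — is a real gap, not a matter of ``inspecting the proof of Theorem \ref{th-quenched}.'' That proof invokes the G\"artner--Ellis theorem along a \emph{single} frame sequence whose row empirical measures converge in $\mathcal{W}_{q_\star}$; G\"artner--Ellis is a sequential statement, and its lower bound rests on an exponential change of measure calibrated to the limit $\Psi_\nu$ together with essential smoothness of that limit. Upgrading to a bound holding simultaneously for every frame $a$ with $\mathsf{L}^a_{n,k}\in B_\epsilon(\nu)$ means running the tilted-measure argument uniformly over the compact set $\overline{B_\epsilon(\nu)}$, including uniform steepness of $\Psi_\mu$ and control of how the optimizing tilt varies with $\mu$ — none of which follows ``by inspection.'' Separately, sending $\epsilon\downarrow 0$ on the right-hand sides requires both upper and lower semicontinuity of $\mu\mapsto\mathcal{J}_\mu^{\sf{qu}}(x)$ at $\nu$, which you attribute to the $\mathcal{W}_{q_\star}$-continuity of $\nu\mapsto\Psi_\nu$. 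But $\Psi_\nu^*$ inherits only \emph{lower} semicontinuity in $\nu$ (it is a supremum of $\nu$-continuous maps); upper semicontinuity, and the joint lower semicontinuity of $(\nu,x)\mapsto\mathcal{J}_\nu^{\sf{qu}}(x)$ that your joint LDP rate function needs, require additional convex-analytic work (locally uniform convergence of the finite convex log-mgfs and their conjugates, plus a uniform compact localization of the $\tau$-infimum in \eqref{def-jqu}) that is not supplied.

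The paper sidesteps both difficulties by working with Legendre transforms rather than a joint LDP. It proves an LDP for $R^{(n)}$ with a convex good rate function $J_R$ (rotation invariance, Cram\'er's theorem, and the spherical-marginal LDP of \cite{BarGamLozRou10}), shows $\Phi=J_R^*$ via Varadhan's lemma — once in Lemma \ref{lem-lmgrep}, applied to the Stiefel LDP of Theorem \ref{th-stldp} with the moment control of Lemma \ref{lem-bound}, and once applied to the LDP for $R^{(n)}$ — deduces $J_R=\Phi^*$ by convex Legendre duality, and then, crucially, identifies $\Phi^*(\tau)=\inf_{\nu}\{\Psi_\nu^*(\tau)+\mathbb{H}_k(\nu)\}$ via Sion's minimax theorem in Lemma \ref{lem-phistarvar}. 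A final contraction then yields \eqref{formula-var}. The minimax theorem is precisely the analytic device that replaces your probabilistic ``sum over environments'' decomposition and makes uniform quenched estimates unnecessary. Your plan is a natural first instinct and, if carried through, would yield the stronger joint LDP; but as written it asserts the uniform estimates rather than proving them, and that is exactly where the difficulty lives.
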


Note that  $\mathbb{H}_k(\nu) =0$ when $\nu = \gamma^{\otimes k}$,  which implies   $\mathcal{J}^{\sf{an}} \leq \mathcal{J}^{\sf{qu}}_{\gamma^{\otimes k}}$, as would  be expected from  Jensen's inequality given $\mathcal{J}^{\sf{qu}}_{\gamma^{\otimes k}}$ is simply the GRF of the quenched
LDP for $\{\mathbf{A}_{n,k}^\intercal X^{(n)}\}_{n \in \NN_k}$.
More generally, the optimization problem \eqref{formula-var}
can be interpreted as saying that at the large deviation level,
the annealed probability of a rare event is the infimum, over all random ``environments'' (in this case ``projections''),  of the probability
of the rare event conditioned on that environment plus the cost of the choice of the environment. 
While such a  relation is intuitive,   rigorous proofs of such informal statements are typically  non-trivial. 
For example, such  variational representations have been rigorously established only in  a few specific cases, such as LDPs for random
walks in random environments on $\mathbb{Z}$  in \cite{ComGanZei00} and on supercritical Galton-Watson trees in \cite{Aid10}.  
The one-dimensional case ($k=1$) of Theorem \ref{th-var} for $\ell_p^n$ balls recovers  Theorem 2.7 of \cite{GanKimRam17}. 
The proof of the 
 the multi-dimensional case stated in 
Theorem \ref{th-var}),   which is given in Section \ref{sec-pfva}, is more involved and
relies on an auxiliary LDP for the following sequence
of random empirical measures, analogous to those defined in \eqref{emp_L}: 
\begin{equation}\label{eq-lnk}
  \mathsf{L}_{n,k} := \mathsf{L}_{n,k}^{\mathbf{A}}  = \frac{1}{n}\sum_{i=1}^n \delta_{\sqrt{n}\mathbf{A}_{n,k}(i,\cdot)},\quad n\in\NN.
\end{equation}

\begin{theorem}\label{th-stldp}
Fix  $k \in \NN$. Then for all $q \in (0,2)$, the sequence $\{\mathsf{L}_{n,k}\}_{n\in\NN_k}$ satisfies an LDP in $\cP_q(\RR^k)$ with the strictly convex GRF $\mathbb{H}_k$.
\end{theorem}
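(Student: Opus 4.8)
The plan is to transfer a known large deviation principle for the rows of a Haar-distributed Stiefel matrix to the level of empirical measures via Sanov-type arguments, handling the nontrivial topology (the $q$-Wasserstein topology rather than plain weak convergence) with care. The starting point is the classical observation that if $G_{n,k}$ is an $n\times k$ matrix of i.i.d.\ standard Gaussians, then $\sqrt{n}\,\mathbf{A}_{n,k}$ has the same law as $G_{n,k}(G_{n,k}^\intercal G_{n,k}/n)^{-1/2}$; since $G_{n,k}^\intercal G_{n,k}/n \to I_k$ almost surely and, more quantitatively, satisfies an LDP at speed $n$ concentrated at $I_k$, the rows of $\sqrt{n}\,\mathbf{A}_{n,k}$ are a small (exponentially negligible at speed $n$) perturbation of the i.i.d.\ Gaussian rows of $G_{n,k}$. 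First I would record the Bartlett-type decomposition (presumably this is what Section \ref{sec-bartlett} sets up) and the LDP for $\{G_{n,k}^\intercal G_{n,k}/n\}_n$; the latter follows from Cram\'er's theorem in $\RR^{k\times k}$ applied to the i.i.d.\ rank-one summands $g_i \otimes g_i$, whose log-mgf is finite in a neighborhood of $0$ and which is classical (Wishart / matrix Cram\'er).

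Second, I would establish the LDP for $\{\mathsf{L}_{n,k}^G\}_n$, the empirical measure of the i.i.d.\ Gaussian rows, in the $q$-Wasserstein topology for $q \in (0,2)$. Plain Sanov gives the LDP in the weak topology with rate $H(\cdot\,|\,\gamma^{\otimes k})$; to upgrade to the $q$-Wasserstein topology one uses that $\gamma^{\otimes k}$ has Gaussian tails, so $\int \|x\|^{q'}\,d\gamma^{\otimes k} < \infty$ for all $q' $, and in particular for some $q' > q$. The standard device (see, e.g., Wang--Wang--Wu or the exponential-tightness criterion for empirical measures) is: exponential tightness in $\mathcal{P}_q(\RR^k)$ holds because $\PP(\frac{1}{n}\sum_i \|g_i\|^{q'} > M)$ decays like $e^{-nc(M)}$ with $c(M)\to\infty$, which controls the $q$-th moment uniformly; combined with the weak-topology LDP and the fact that $q$-Wasserstein convergence is weak convergence plus convergence of $q$-th moments, the inverse contraction principle yields the LDP in $\mathcal{P}_q(\RR^k)$ with the same rate function $H(\cdot\,|\,\gamma^{\otimes k})$ — but one must check the rate function is unchanged, i.e.\ that $H(\nu\,|\,\gamma^{\otimes k}) = +\infty$ automatically forces nothing extra, which is fine since $\mathcal{P}_q$ is a Borel subset carrying the finer topology.

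Third, I would combine the two ingredients. Write $\mathsf{L}_{n,k} = \mathsf{L}_{n,k}^{\mathbf{A}}$ as the pushforward of $\mathsf{L}_{n,k}^G$ under $x \mapsto (M_n)^{-1/2}x$ where $M_n = G_{n,k}^\intercal G_{n,k}/n$. This is an ``LDP for a pair'' argument: the pair $(\mathsf{L}_{n,k}^G, M_n)$ satisfies an LDP in $\mathcal{P}_q(\RR^k)\times \RR^{k\times k}$ with good rate function $(\mu, M)\mapsto H(\mu\,|\,\gamma^{\otimes k})$ if $M = \mathcal{C}(\text{something})$ — actually the cleanest route is to note $M_n = \mathcal{C}(\mathsf{L}_{n,k}^G)$ is itself a continuous-ish functional (continuous on $\mathcal{P}_q$ for $q \ge 2$, but here $q<2$, so $\mathcal{C}$ is only lower semicontinuous, which is the source of difficulty). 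The map $\Phi: (\mu, M) \mapsto ((M)^{-1/2})_\sharp \mu$ is continuous on the relevant set (where $M \succ 0$), so the contraction principle gives the LDP for $\mathsf{L}_{n,k}$ with rate function $\mathbb{H}_k(\nu) = \inf\{ H(\mu\,|\,\gamma^{\otimes k}) : (M^{-1/2})_\sharp \mu = \nu, \ M = \mathcal{C}(\mu)\}$. Finally I would solve this variational problem explicitly: substituting $\mu = (M^{1/2})_\sharp \nu$ and using $\mathcal{C}(\mu) = M^{1/2}\mathcal{C}(\nu)M^{1/2} = M$ forces $\mathcal{C}(\nu) = M^{-1}$, so the constraint set is a single point once $\nu$ is fixed (namely $M = \mathcal{C}(\nu)^{-1}$, requiring $\mathcal{C}(\nu) \succ 0$), and a Gaussian change-of-variables computation for relative entropy gives
\[
H\big((M^{1/2})_\sharp\nu \,\big|\, \gamma^{\otimes k}\big) = H(\nu\,|\,\gamma^{\otimes k}) + \tfrac12\log\det M - \tfrac12\operatorname{tr}(M^{-1}) + \tfrac12 k,
\]
which with $M = \mathcal{C}(\nu)^{-1}$ becomes $H(\nu\,|\,\gamma^{\otimes k}) - \tfrac12\log\det\mathcal{C}(\nu) + \tfrac12\operatorname{tr}(I_k - \mathcal{C}(\nu)) + $ (correction terms); reconciling this with the stated formula $\mathbb{H}_k(\nu) = H(\nu\,|\,\gamma^{\otimes k}) + \tfrac12\operatorname{tr}(I_k - \mathcal{C}(\nu))$ on $\{\mathcal{C}(\nu) \preceq I_k\}$ requires absorbing the $-\tfrac12\log\det\mathcal{C}(\nu)$ term — this suggests the correct path is not to write $\mathsf{L}_{n,k}$ as a pushforward of the Gaussian empirical measure but rather to run Sanov directly on the rows of $\sqrt{n}\mathbf{A}_{n,k}$ using the explicit density of the Haar measure (the matrix-variate structure), whose single-row marginal has density proportional to $(1 - \|x\|^2/n)^{(n-k-2)/2}$ on the scaled ball, producing the extra $\tfrac12\operatorname{tr}(I_k - \mathcal{C}(\nu))$ term as the limiting log-density contribution and the $\mathcal{C}(\nu) \preceq I_k$ constraint from the support.

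The main obstacle, I expect, is the interplay between the rows of $\sqrt{n}\mathbf{A}_{n,k}$ not being independent (they are exchangeable but constrained to lie on a sphere-like set $\{A^\intercal A = I_k\}$) and the need for the $q$-Wasserstein topology with $q < 2$ — precisely the regime where the covariance functional $\mathcal{C}$ is not continuous, so one cannot naively contract. The resolution should be a de Finetti / Gibbs-conditioning style argument: condition the i.i.d.\ Gaussian empirical measure $\mathsf{L}_{n,k}^G$ on the event $\{\mathcal{C}(\mathsf{L}_{n,k}^G) \approx I_k\}$ (an event of probability $e^{-no(1)}$ only in a weak sense — actually it has a hard constraint from the Jacobian), and use the LDP for $(\mathsf{L}_{n,k}^G, \mathcal{C}(\mathsf{L}_{n,k}^G))$ together with a careful verification that conditioning multiplies the rate function by the indicator of $\{\mathcal{C} = I_k\}$ only after the pushforward by $\mathcal{C}(\mu)^{-1/2}$ is applied; tracking the Jacobian of this pushforward through the relative entropy is exactly what produces the $\tfrac12\operatorname{tr}(I_k - \mathcal{C}(\nu))$ correction and the constraint region. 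Strict convexity of $\mathbb{H}_k$ then follows from strict convexity of $H(\cdot\,|\,\gamma^{\otimes k})$ together with linearity of $\nu \mapsto \mathcal{C}(\nu)$ and concavity of $M \mapsto \tfrac12\operatorname{tr}(I_k - M)$ (affine, hence the sum stays strictly convex), plus convexity of the constraint set $\{\mathcal{C}(\nu)\preceq I_k\}$.
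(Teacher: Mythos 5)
Your overall plan — Gaussian representation of the Haar-Stiefel ensemble, push the Gaussian empirical measure through the inverse of a square-root factor, then contract — is the right scaffolding and matches the paper's Bartlett-decomposition approach. However, the step where you contract from a joint LDP for $(\mathsf{L}_{n,k}^G, M_n)$ with $M_n = \mathcal{C}(\mathsf{L}_{n,k}^G)$ is where the argument breaks, and you half-notice this but don't resolve it. If such a joint LDP held with the \emph{naive} rate function (namely, $H(\mu\,|\,\gamma^{\otimes k})$ on the graph $\{(\mu, M): M = \mathcal{C}(\mu)\}$ and $+\infty$ elsewhere), then pushing forward through $(\mu, M) \mapsto (M^{-1/2})_\sharp \mu$ would force $\mathcal{C}(\nu) = M^{-1/2}\mathcal{C}(\mu)M^{-1/2} = I_k$ exactly (not $\mathcal{C}(\nu) = M^{-1}$ as you wrote — an algebra slip), and the resulting rate function would be $+\infty$ off $\{\nu : \mathcal{C}(\nu) = I_k\}$. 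That is wrong: $\mathbb{H}_k$ is finite on the strictly larger set $\{\mathcal{C}(\nu) \preceq I_k\}$, and the paper explicitly remarks that $\{\nu : \mathcal{C}(\nu) = I_k\}$ is neither open nor closed, so the rate-function infima in the LDP never see it as a hard constraint.

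The missing idea is the \emph{approximate contraction principle} (Lemma~\ref{cor-iid}, taken from Ben~Arous--Dembo--Guionnet, Prop.~6.4). Because $z \mapsto z\otimes z$ is not weakly continuous, the joint sequence $(\mathsf{L}_{n,k}^{\mathbf{Z}}, \mathcal{C}(\mathsf{L}_{n,k}^{\mathbf{Z}}))$ does \emph{not} have a rate function supported on the graph of $\mathcal{C}$; instead it has the relaxed rate $\mathbb{J}_k(\nu, M)$ of \eqref{jform}, which is finite for all $M \succeq \mathcal{C}(\nu)$, with penalty $\tfrac12\operatorname{tr}(M - \mathcal{C}(\nu))$. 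This slack records the possibility of mass escaping to infinity so that the running second-moment statistic exceeds the second moment of the weak limit. Only after this relaxed joint LDP is in hand does one contract through the genuinely continuous map $(\mu, M) \mapsto \mu(\cdot \times \Gamma(M))$, where $\Gamma(M)$ is the Cholesky (upper-triangular) factor that the Bartlett decomposition naturally produces — not the symmetric square root, though that difference is cosmetic. The infimization over $M$ in the contracted rate function (Lemma~\ref{lem-ieq}) is then what produces the $\tfrac12\operatorname{tr}(I_k - \mathcal{C}(\nu))$ term and the $\mathcal{C}(\nu) \preceq I_k$ constraint, with the $\log\det$-type terms you flagged cancelling exactly against the $\tfrac12(M_{ii}-1)$ contributions. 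Your fallback idea of running Sanov on the explicit single-row marginal density of Haar measure is closer in spirit to the $k=1$ case of BDG, but it does not extend to $k>1$ without additional structure, precisely because the rows are dependent through the non-commutative orthonormality constraint; the paper's whole point in invoking the Bartlett decomposition is to sidestep this.

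Two smaller points. First, your Wasserstein-topology upgrade is much more complicated than necessary: since $\mathbf{A}_{n,k}^\intercal\mathbf{A}_{n,k} = I_k$ deterministically, $\int\|x\|^2\,d\mathsf{L}_{n,k} = k$ almost surely, so $\mathsf{L}_{n,k}$ lives in the fixed $\mathcal{W}_q$-compact set $\cK$ of \eqref{k2} with probability one, giving exponential tightness in $\cP_q(\RR^k)$ for free (Lemma~\ref{lem-compcat}); no large-deviation tail bound on $\tfrac1n\sum\|g_i\|^{q'}$ is needed. Second, your argument for strict convexity of $\mathbb{H}_k$ is correct and matches the paper's (strict convexity of relative entropy plus affinity of $\nu \mapsto \operatorname{tr}(I_k - \mathcal{C}(\nu))$ and convexity of the constraint set).
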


This theorem, which is established in  Section \ref{sec-auxemp},  generalizes Theorem 6.6 of \cite{BenDemGui01}, which states an LDP for the empirical measure of coordinates drawn uniformly from the sphere $S^{n-1}$, which corresponds to the case $k=1$ in our work.
In contrast to this case, the $k>1$ case necessitates more extensive computations which arise due to the non-commutative matrix setting, where the Bartlett decomposition of Proposition \ref{prop-bartlett} replaces the usual polar decomposition for a random vector from the sphere. Given that large deviation perspectives have informed the analysis of asymptotics for spherical integrals \cite{GuiMad05, OnaMorHal13}, it is possible that a similar approach could inform asymptotics for integrals over the Stiefel manifold, which arise, for instance, as the normalizing constant of the matrix Bingham distribution \cite{Hof09}, or in the study of multi-spiked random covariance matrices \cite{OnaMorHal14}.

\begin{remark}
  The first term in the definition \eqref{hk} of $\mathbb{H}_k$ is the relative entropy with respect to the $k$-dimensional
  standard Gaussian measure, which is (due to Sanov's theorem) the large deviation rate function for the sequence of empirical
  measures of the rows of an $n\times k$ matrix of i.i.d.\ standard Gaussian elements. Hence, Theorem \ref{th-stldp} offers a way of characterizing the distinction between Haar-distributed  matrices  on the Stiefel manifold and Gaussian matrices. Outside of the large deviations literature, a different comparison between such Stiefel and Gaussian matrices can be found in \cite{Tro12}, which analyzes  expectations of sublinear convex functions of random matrices. 
\end{remark}

\begin{remark}
  The second term in the definition \eqref{hk} of $\mathbb{H}_k$ arises from the orthogonality and normalization constraint defining the Stiefel manifold.
  Note that because $\PP(\mathbf{A}_{n,k}^\intercal \mathbf{A}_{n,k} = I_k) = 1$, we have, $\PP\text{-a.s.}$,
\begin{align}\label{traceterm}
0 =  \text{tr}\left(I_k - \mathbf{A}_{n,k}^\intercal \mathbf{A}_{n,k}\right) =\text{tr}(I_k- \mathcal{C}(\mathsf{L}_{n,k})), \quad n\in\NN_k. 
\end{align} 
Nonetheless, the definition of the rate function $\mathbb{H}_k$ includes the trace term $\text{tr}(I_k - \mathcal{C}(\nu))$, and $\mathbb{H}_k(\nu)$ is finite even for $\nu \in \cP(\RR^k)$ such that $\text{tr}(I_k - \mathcal{C}(\nu))\ne 0$, due to the fact that the statement of
an LDP (Definition \ref{def-ldp}) involves infimization of the rate function $\mathbb{H}_k$ not over a set like $\mathcal{V}_k:= \{\nu\in\cP(\RR^k) : I_k = \mathcal{C}(\nu)\}$, but rather over its interior and closure (in the space of probability measures). In particular, the example set $\mathcal{V}_k$ is neither open nor closed with respect to the weak topology.  In fact, it is possible to show from \cite{WanWanWu10} that  $\mathcal{V}_k$ is neither open nor closed with respect to \emph{any} topology for which the sequence
 of empirical measures $\{\mathsf{L}_{n,k}\}_{n\in\NN_k}$ satisfies an LDP. 
\end{remark}

An immediate consequence of Theorem \ref{th-stldp} is the following:

\begin{corollary}\label{slln}
  Fix  $k \in \NN$. Then for all $q \in (0,2)$, the sequence $\{\mathsf{L}_{n,k}\}_{n\in\NN_k}$ satisfies the strong law of large numbers in $\cP_q(\RR^k)$. That is, almost surely, as $n \to \infty$, we have $\cW_q(\mathsf{L}_{n,k}, \gamma_2^{\otimes k}) \to 0$.  
\end{corollary}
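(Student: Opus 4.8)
The plan is to read off Corollary~\ref{slln} from the large deviation upper bound in Theorem~\ref{th-stldp} by the standard route from an LDP with a good rate function having a unique zero to a strong law of large numbers, via the Borel--Cantelli lemma.

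First I would identify the unique minimizer of $\mathbb{H}_k$. If $\mathbb{H}_k(\nu) < \infty$, then $\mathcal{C}(\nu) \preceq I_k$, so $I_k - \mathcal{C}(\nu)$ is positive semidefinite and $\mathrm{tr}(I_k - \mathcal{C}(\nu)) \ge 0$; together with $H(\nu \mid \gamma^{\otimes k}) \ge 0$ this shows $\mathbb{H}_k \ge 0$ everywhere, and $\mathbb{H}_k(\nu) = 0$ forces $H(\nu \mid \gamma^{\otimes k}) = 0$, i.e.\ $\nu = \gamma^{\otimes k}$ (this uniqueness also follows from the strict convexity asserted in Theorem~\ref{th-stldp}). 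Then, fixing $\varepsilon > 0$, I would set $F_\varepsilon := \{\nu \in \cP_q(\RR^k) : \cW_q(\nu, \gamma^{\otimes k}) \ge \varepsilon\}$, which is closed in the $q$-Wasserstein topology because that topology is metrized by $\cW_q$ (Definition~\ref{def-Wasserstein}), and argue that $c_\varepsilon := \inf_{\nu \in F_\varepsilon} \mathbb{H}_k(\nu) > 0$. Indeed, if some $\nu_j \in F_\varepsilon$ had $\mathbb{H}_k(\nu_j) \to 0$, then for large $j$ the $\nu_j$ would lie in the compact sublevel set $\{\mathbb{H}_k \le 1\}$ (compactness since $\mathbb{H}_k$ is a good rate function), so a subsequence would converge to some $\nu_\star \in F_\varepsilon$, and lower semicontinuity would give $\mathbb{H}_k(\nu_\star) = 0$, whence $\nu_\star = \gamma^{\otimes k} \notin F_\varepsilon$, a contradiction. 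Applying the upper bound half of Theorem~\ref{th-stldp} to the closed set $F_\varepsilon$,
\[
\limsup_{n\to\infty} \tfrac1n \log \PP\big( \cW_q(\mathsf{L}_{n,k}, \gamma^{\otimes k}) \ge \varepsilon \big) \le -c_\varepsilon < 0,
\]
so $\PP(\cW_q(\mathsf{L}_{n,k}, \gamma^{\otimes k}) \ge \varepsilon) \le e^{-n c_\varepsilon/2}$ for all large $n$; these bounds are summable in $n$, so Borel--Cantelli yields that almost surely $\cW_q(\mathsf{L}_{n,k}, \gamma^{\otimes k}) < \varepsilon$ for all sufficiently large $n$. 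Intersecting this almost sure event over $\varepsilon \in \{1/m : m \in \NN\}$ gives, almost surely, $\limsup_{n\to\infty} \cW_q(\mathsf{L}_{n,k}, \gamma^{\otimes k}) = 0$, which is the claim.

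The argument is essentially routine given Theorem~\ref{th-stldp}; the only step requiring any care is the strict positivity of $c_\varepsilon$, which is exactly where the goodness (compactness of sublevel sets) of $\mathbb{H}_k$ enters. I would also note that nothing needs to be assumed about the joint law of the matrices $\mathbf{A}_{n,k}$ across different $n$: the Borel--Cantelli argument only consumes the marginal exponential tail bounds, so the conclusion holds for any coupling realizing the $\mathsf{L}_{n,k}$ on a common probability space.
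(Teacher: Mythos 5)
Your proof is correct and follows essentially the same route as the paper's: unique zero of the good rate function $\mathbb{H}_k$ at $\gamma^{\otimes k}$, large deviation upper bound on the closed $\mathcal{W}_q$-complement of an $\varepsilon$-ball, and Borel--Cantelli. You supply more detail than the paper (in particular, the compactness/lower-semicontinuity argument showing $\inf_{F_\varepsilon}\mathbb{H}_k>0$, which the paper leaves implicit), but the underlying reasoning is the same.
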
 
\begin{proof}
By Theorem~\ref{th-stldp}, the rate function $\mathbb{H}_k$ in~\eqref{hk} is strictly convex. Since $\mathbb{H}_k(\gamma^{\otimes k})=0$, $\mathbb{H}_k$ attains its unique minimum over $\cP_q(\RR^k)$ at $\gamma_2^{\otimes k}$. For $\epsilon >0$, due to the LDP for $\{\mathsf{L}_{n,k}\}_{n\in\NN_k}$ and the uniqueness of the minimum of $\mathbb{H}_k$, there exists $\delta >0$ and $N\in\NN_k$ such that for $n>N$, $\PP(\mathcal{W}_q(\mathsf{L}_{n,k},\gamma_2^{\otimes k})>\epsilon )\leq e^{-n\delta}$, which combined with the Borel-Cantelli Lemma yields the almost sure convergence of $\mathsf{L}_{n,k}$.
\end{proof}

\section{The Bartlett decomposition and its consequences}\label{sec-bartlett}

We recall the well known result of Bartlett on the \textsf{QR} decomposition of a matrix with independent standard Gaussian entries, and derive
certain consequences which will be used in the proofs of the main theorem. 
Throughout, let $\tk$ denote the space of $k\times k$ upper triangular matrices, and recall that $\vnk$ denotes the Stiefel manifold of $k$-frames in $\RR^n$.

\begin{definition} 
Fix $k\le n\in\NN$.  Let $\znk\in\RR^{n\times k}$ be an $n\times k$ matrix with i.i.d.\ standard Gaussian elements. Let $\znk=\qnk\rnk$ be the \textnormal{\sffamily QR} decomposition of $\znk$ as the product of the semi-orthogonal matrix $\qnk\in \vnk \subset \RR^{n\times k}$ and the upper triangular matrix $\rnk\in \tk \subset \RR^{k\times k}$. \end{definition}

\begin{proposition}[Bartlett decomposition \cite{Bar33}]\label{prop-bartlett}
The law of $\qnk$ is $\sigma_{n,k}$, the Haar measure on $\vnk$. Moreover, the diagonal entries of $\rnk$ satisfy $\rnk(i,i) \sim \chi_{n-i+1}$, the chi distribution with $n-i+1$ degrees of freedom, for $i=1,\cdots, k$. 
\end{proposition}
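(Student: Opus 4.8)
The plan is to exploit the rotational invariance of the Gaussian matrix $\znk$ together with the almost sure uniqueness of the \textsf{QR} decomposition for the first assertion, and an explicit Gram--Schmidt computation for the law of the diagonal entries of $\rnk$. Since the result is classical, the proof is essentially a matter of organizing these two ingredients cleanly.

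First I would note that, because the columns of $\znk$ are i.i.d.\ standard Gaussian vectors in $\RR^n$, the law of $\znk$ is invariant under left multiplication by any $O \in {\mathcal O}(n)$, i.e.\ $O\znk \eqdist \znk$. Next, since $\znk$ has full column rank with probability one (the set of rank-deficient $n\times k$ matrices has Lebesgue measure zero, using $k\le n$), the \textsf{QR} decomposition $\znk = \qnk\rnk$ with $\qnk \in \vnk$ and $\rnk \in \tk$ having strictly positive diagonal entries is a.s.\ unique, and the induced map $Z \mapsto (Q(Z),R(Z))$ on full-rank matrices is Borel measurable. For $O \in {\mathcal O}(n)$ one has $O\znk = (O\qnk)\rnk$ with $O\qnk \in \vnk$, so by uniqueness $Q(O\znk) = O\qnk$ on the full-measure event that $\znk$ has full rank. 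Combining this with $O\znk \eqdist \znk$ gives $O\qnk = Q(O\znk) \eqdist Q(\znk) = \qnk$ for every $O \in {\mathcal O}(n)$. Since $\sigma_{n,k}$ is by definition the unique ${\mathcal O}(n)$-invariant probability measure on $\vnk$, the law of $\qnk$ is $\sigma_{n,k}$.

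For the diagonal entries I would write $\znk = [\,v_1 \mid \cdots \mid v_k\,]$ in terms of its columns and run Gram--Schmidt: set $q_1 := v_1/\norm{v_1}$ and, for $2 \le i \le k$, $q_i := u_i/\norm{u_i}$ where $u_i := (I_n - P_{i-1})v_i$ and $P_{i-1}$ is the orthogonal projection of $\RR^n$ onto $\mathrm{span}(q_1,\dots,q_{i-1})$; then $\qnk = [\,q_1 \mid \cdots \mid q_k\,]$ and $\rnk(i,i) = \norm{u_i}$, with $\rnk(1,1) = \norm{v_1} \sim \chi_n$. Now $P_{i-1}$ is a Borel function of $v_1,\dots,v_{i-1}$ only and is a.s.\ an orthogonal projection of rank $i-1$; since $v_i$ is independent of $(v_1,\dots,v_{i-1})$, conditioning on $P_{i-1} = P$ for a fixed rank-$(i-1)$ projection $P$, the vector $(I_n - P)v_i$ is a standard Gaussian vector supported on the $(n-i+1)$-dimensional range of $I_n - P$, so $\norm{(I_n-P)v_i}$ has the $\chi$ distribution with $n-i+1$ degrees of freedom. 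As this conditional law does not depend on $P$, we conclude $\rnk(i,i) \sim \chi_{n-i+1}$ for each $i = 1,\dots,k$ (and in fact the $\rnk(i,i)$ turn out to be mutually independent, though that is not needed here).

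The genuinely routine points are the measurability of the \textsf{QR} map and the a.s.\ full-rank property of a Gaussian matrix. The only step requiring a little care is the uniqueness of the \textsf{QR} decomposition, which forces the convention that $\rnk$ has positive diagonal entries; with that convention in place, Gram--Schmidt simultaneously establishes existence and identifies the diagonal of $\rnk$, so both halves of the proposition are handled at once. I do not expect a real obstacle, since this is a well-known fact; the main work is simply presenting the invariance argument and the conditioning inside Gram--Schmidt carefully.
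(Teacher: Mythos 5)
The paper itself gives no proof of Proposition~\ref{prop-bartlett}: it simply cites Bartlett~\cite{Bar33} and treats the statement as classical background. Your argument supplies a correct self-contained proof by the standard route. Both halves are sound: for the Haar law of $\qnk$, you correctly combine the $\mathcal{O}(n)$-invariance of the Gaussian matrix with the a.s.\ uniqueness of the \textsf{QR} factorization (under the positive-diagonal convention for $\rnk$, which is also the convention the paper implicitly uses when it observes the diagonal entries of $\rnk$ are a.s.\ positive) and the characterization of $\sigma_{n,k}$ as the unique $\mathcal{O}(n)$-invariant probability measure on $\vnk$. For the diagonal entries, the conditioning argument is also correct: $P_{i-1}$ depends only on $v_1,\dots,v_{i-1}$ and is a.s.\ a rank-$(i-1)$ projection, $v_i$ is independent of it, and the conditional law of $\norm{(I_n-P)v_i}$ given $P_{i-1}=P$ is $\chi_{n-i+1}$ regardless of $P$, hence so is the unconditional law. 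The parenthetical remark about mutual independence of the $\rnk(i,i)$ is accurate and correctly flagged as unnecessary for the statement being proved.
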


\begin{remark}
In fact, the matrices $\qnk$ and $\rnk$ of the Bartlett decomposition are independent, and moreover, the marginal law of the off-diagonal entries of $\rnk$ are also explicitly known; however, we will not need these facts for our analysis. Also,  note that when $k=1$, the Bartlett decomposition corresponds to the classical polar decomposition of the $n$-dimensional Gaussian measure.
\end{remark}

Let $\lkz$ denote the empirical measure of the \emph{rows} of $\znk$,
\begin{equation*} \lkz := \frac{1}{n}\sum_{i=1}^n \delta_{\znk(i,\cdot)} \in \cP(\RR^k).
\end{equation*}
Then, due to Proposition \ref{prop-bartlett}, for $\mathbf{A}_{n,k}$ distributed according to the Haar measure $\sigma_{n,k}$ on $\vnk$, we  have 
\begin{equation}\label{xzr} \mathbf{A}_{n,k} \eqdist \qnk =\znk \rnk^{-1}.
\end{equation}
In the second equality, we use the fact that $\rnk$ is (almost surely) invertible, since it is an upper triangular matrix with diagonal entries that are all (almost surely) positive.  Recalling the definition of $\lk$ from  \eqref{eq-lnk}, and using 
 the representation \eqref{xzr}, we have for any Borel set $B\subset \RR^k$, 
\begin{align}
  \lk(B) &\eqdist \frac{1}{n} \sum_{r=1}^n \delta_{\sqrt{n} \znk(r,\cdot) \rnk^{-1}} (B)
  = \lkz \left(B \tfrac{\rnk}{\sqrt{n}}\right). \label{empalt}
\end{align}
Fortuitously, each element of the matrix $\rnk$ can be computed as a function of the \emph{rows} of the matrix $\znk$, and  can be written as the image of a linear functional of the measure $\lkz$. To be more precise, we recall the Gram-Schmidt process. For a matrix $Z\in \RR^{n\times k}$ with columns $z_1,\dots , z_k \in \RR^n$, let
\begin{align*}
y_1 &:= z_1  & q_1 := \frac{y_1}{\|y_1\|_2}; \hspace{0.894in}\\
y_i &:=  z_i - \sum_{h=1}^{i-1} \langle q_h,z_i\rangle q_h & q_i := \frac{y_i}{\|y_i\|_2}, \quad i=2,\dots, k.
\end{align*}
Then, we have the decomposition $Z=QR$ where $Q=(q_1,\dots, q_k)$ and 
\begin{equation}\label{rmat} R = \begin{pmatrix} \langle q_1, z_1 \rangle  & \langle q_1, z_2\rangle & \langle q_1, z_3\rangle & \cdots & \langle q_1, z_k \rangle \\ 0 & \langle q_2, z_2\rangle & \langle q_2,z_3 \rangle &  \cdots & \langle q_2, z_k \rangle \\ 0 & 0  & \langle q_3,z_3 \rangle &  \cdots & \langle q_3, z_k \rangle \\   \vdots &   & & \ddots & \vdots   \\   0 &  &  & & \langle q_k, z_k\rangle 
 \end{pmatrix}.
\end{equation}
Note that we also have the following relation among the elements of $R$: for $1\le i\le j \le k$,
\begin{equation}\label{rrel}
\langle q_i, z_j \rangle = \frac{\langle y_i, z_j\rangle }{\|y_i\|_2} = \frac{\langle z_i, z_j \rangle - \sum_{h=1}^{i-1} \langle q_h,z_i\rangle  \langle q_h, z_j\rangle  }{\left(\|z_i\|^2 - \sum_{h=1}^{i-1} \langle q_h,z_i\rangle^2  \right)^{1/2}}.
\end{equation}
This expression allows us to clarify the relationship between $\rnk$ and $\lkz$.

\begin{definition}\label{def-symk}
Let $\text{Sym}_k$ be the space of real symmetric $k\times k$ matrices. For $L,M \in \text{Sym}_k$, we write $L \succeq M$ (resp., $L\succ M$) if $L-M$ is positive semi-definite (resp., positive definite).
\end{definition}

We equip  $\text{Sym}_k \subset \RR^{k \times k}$ with  the induced 
  Borel $\sigma$-algebra  when viewing  it as a measurable space,  and
the Frobenius norm when viewed as a Banach space.

\begin{definition}
Define the map $\Gamma:\RR^{k\times k} \ra \tk$ according to the following iterative procedure: for $M\in \RR^{k\times k}$, $j=1,\dots,k$, and $i=1,\dots, j$, 
\begin{equation}\label{gamdef} \Gamma(M)_{ij} := \frac{M_{ij} - \sum_{h=1}^{i-1} \Gamma(M)_{hi}\, \Gamma(M)_{hj}}{\left(M_{ii} - \sum_{h=1}^{i-1} \Gamma(M)_{hi}^2  \right)^{1/2}} \, .
\end{equation}
In the case $i=j=1$, we abide by the convention that empty summations are set to zero, so $\Gamma(M)_{11} :=  M_{11}^{1/2}$.
\end{definition}

\begin{lemma}\label{lem-rho}
We have $\PP$-a.s.,  $n^{-1/2}\rnk = \Gamma ( \mathcal{C}(\lkz))$, where $\mathcal{C}$ is the covariance map of \eqref{cdef}.
\end{lemma}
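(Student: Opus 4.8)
The plan is to verify the identity entrywise, matching the explicit formula \eqref{gamdef} for $\Gamma$ against the Gram--Schmidt expression \eqref{rrel} for the entries of $\rnk$, after observing that the empirical covariance matrix $\mathcal{C}(\lkz)$ records exactly the inner products of the columns of $\znk$ up to the scaling $1/n$. The first step is to unpack $\mathcal{C}(\lkz)$: by the definition \eqref{cdef} of the covariance map and the definition of $\lkz$ as the empirical measure of the rows of $\znk$, we have $\mathcal{C}(\lkz) = \tfrac1n \sum_{r=1}^n \znk(r,\cdot) \otimes \znk(r,\cdot) = \tfrac1n \znk^\intercal \znk$, so that $\mathcal{C}(\lkz)_{ij} = \tfrac1n \langle z_i, z_j\rangle$ where $z_1,\dots,z_k$ denote the columns of $\znk$. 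Since $\rnk$ has almost surely positive diagonal entries (as recorded after \eqref{xzr}), all the square roots appearing below are of strictly positive quantities, so the iteration defining $\Gamma$ is well-defined at $M = \mathcal{C}(\lkz)$ almost surely.

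The core of the argument is an induction on $i$ (the row index), proving simultaneously for all $j \geq i$ that $n^{-1/2}\rnk(i,j) = \Gamma(\mathcal{C}(\lkz))_{ij}$ and, as an auxiliary bookkeeping fact, that $n^{-1/2}\langle q_i, z_j\rangle$ matches the corresponding entry and that $\tfrac1n\|y_i\|_2^2 = \mathcal{C}(\lkz)_{ii} - \sum_{h<i}\Gamma(\mathcal{C}(\lkz))_{hi}^2$. The base case $i=1$ is immediate: $\rnk(1,j) = \langle q_1, z_j\rangle = \langle z_1,z_j\rangle/\|z_1\|_2$, and dividing by $\sqrt n$ and multiplying numerator and denominator appropriately gives $\mathcal{C}(\lkz)_{1j}/\mathcal{C}(\lkz)_{11}^{1/2} = \Gamma(\mathcal{C}(\lkz))_{1j}$, using the convention $\Gamma(M)_{11} = M_{11}^{1/2}$. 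For the inductive step, one takes the relation \eqref{rrel},
\[
\langle q_i, z_j\rangle = \frac{\langle z_i,z_j\rangle - \sum_{h=1}^{i-1}\langle q_h,z_i\rangle\langle q_h,z_j\rangle}{\bigl(\|z_i\|_2^2 - \sum_{h=1}^{i-1}\langle q_h,z_i\rangle^2\bigr)^{1/2}},
\]
divides numerator and denominator by $n$ (note the denominator scales like $n$ under the square root, matching the single power of $n$ in the numerator), and substitutes $\tfrac1n\langle z_i,z_j\rangle = \mathcal{C}(\lkz)_{ij}$ together with the inductive hypothesis $n^{-1/2}\langle q_h, z_\ell\rangle = \Gamma(\mathcal{C}(\lkz))_{h\ell}$ for $h < i$. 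This reproduces exactly the right-hand side of \eqref{gamdef} evaluated at $M = \mathcal{C}(\lkz)$, completing the induction. Since $\rnk(i,j) = \langle q_i, z_j\rangle$ for $i\le j$ by \eqref{rmat} and $\rnk$ is upper triangular, this establishes $n^{-1/2}\rnk = \Gamma(\mathcal{C}(\lkz))$ entrywise, hence as matrices.

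I do not expect any serious obstacle here; the statement is essentially a reorganization of the classical Gram--Schmidt recursion in terms of the Gram matrix, and the only genuine content is the homogeneity check that makes the factor of $\sqrt n$ distribute correctly between numerator and denominator in \eqref{gamdef}. The one point requiring a word of care is the almost-sure qualifier: one must invoke that $\znk$ has full column rank almost surely (equivalently, $\rnk(i,i) > 0$ for all $i$, as follows from Proposition~\ref{prop-bartlett} since $\chi_{n-i+1}$ charges $(0,\infty)$), so that every denominator in both \eqref{rrel} and \eqref{gamdef} is strictly positive and the Gram--Schmidt process and the map $\Gamma$ are both well-defined along the relevant sample paths.
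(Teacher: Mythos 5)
Your argument is correct and matches the paper's approach: the paper likewise reduces the claim to the observation that $\mathcal{C}_{ij}(\lkz) = \tfrac1n\langle\znk(\cdot,i),\znk(\cdot,j)\rangle$ and then appeals directly to \eqref{rmat} and \eqref{rrel}, which you have simply unpacked into a careful entrywise induction with the $\sqrt{n}$ homogeneity made explicit. Nothing is missing, and your remark that the almost-sure positivity of the diagonal of $\rnk$ (from Proposition~\ref{prop-bartlett}) ensures $\Gamma$ is well-defined along the sample path is a correct, if implicit in the paper, point of care.
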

\begin{proof}
The result follows from \eqref{rmat} and \eqref{rrel} upon noticing that for $1\le i \le j \le k$,
\begin{align*} \mathcal{C}_{ij}(\lkz) &= \frac{1}{n} \sum_{r=1}^n \znk(r,i)\znk(r,j) = \frac{1}{n} \langle \, \znk(\cdot,i) , \znk(\cdot, j)  \, \rangle.
\end{align*}
\end{proof}

\begin{example}
For example, when $k=1$, we have
 $\frac{\mathbf{R}_{n,1}}{\sqrt{n}}= \mathcal{C}_{11}(\mathsf{L}_{n,1})^{1/2}=\frac{ \| \mathbf{Z}_{n,1} \|_2}{\sqrt{n}}$,  
and for $k=2$, we have
\begin{equation*} \frac{\mathbf{R}_{n,2}}{\sqrt{n}} = \begin{pmatrix} 
 \mathcal{C}_{11}(\mathsf{L}_{n,2}) & \frac{ \mathcal{C}_{12}(\mathsf{L}_{n,2}) }{  \mathcal{C}_{11}(\mathsf{L}_{n,2})^{1/2}  }\\\
 0 & \left(\mathcal{C}_{22}(\mathsf{L}_{n,2}) - \frac{\mathcal{C}_{21}(\mathsf{L}_{n,2})^2}{\mathcal{C}_{11}(\mathsf{L}_{n,2})}  \right)^{1/2}
 \end{pmatrix}.
\end{equation*}
\end{example}

\begin{remark}\label{rmk-cholesky}
Note that if $M$ is symmetric positive semi-definite, then $\Gamma(M)$ computes the \emph{Cholesky decomposition} of $M$, so that $\Gamma(M)^\intercal  \Gamma(M) = M$.
\end{remark}


\section{Proof of the empirical measure large deviations}\label{sec-auxemp}

The representation \eqref{empalt} and Lemma \ref{lem-rho} suggest our plan of attack for the proof of Theorem \ref{th-stldp}: first prove a joint LDP for $\{\lkz,  \mathcal{C} (\lkz) \}_{n\in\NN_k}$; then, establish an LDP for $\{\lk\}_{n\in\NN_k}$. Note that we do not attempt to directly establish an LDP for $\{\lk\}_{n\in\NN_k}$ from that for $\{\lkz\}_{n\in\NN_k}$, because the map $\lkz \mapsto \lk$ is not continuous with respect to the weak topology. Nor do we attempt to directly establish an LDP for $\{\lkz, \Gamma(\mathcal{C}(\lkz))\}_{n\in\NN_k}$, because the map $\Gamma \circ \mathcal{C}$ is a nonlinear functional on the space of measures on $\RR^k$. In contrast, our proposed first step is tractable precisely because $\mathcal{C}$ is a linear functional.
We will make use of the following result, which is stated in \cite[Corollary A.2]{KimLiaRam20} as a corollary of \cite[Proposition 6.4]{BenDemGui01}. 

\begin{lemma}[approximate contraction principle]\label{cor-iid}
Let $\Sigma$ be a Polish space and ${\mathcal X}$ be a separable Banach space with topological dual $\mathcal{X}^*$. Let $\{\mathscr{L}_n\}_{n\in \NN}$ be a sequence of $\cP(\Sigma)$-valued random variables such that for each $n\in \NN$, $\mathscr{L}_n$ is the empirical measure of $n$ i.i.d.\  $\Sigma$-valued random variables $\newxi_1,\dots,\newxi_n$ with common distribution $\mu$ (that does not depend on $n$). For any continuous $W:\Sigma \ra \RR$, define 
\begin{equation}
\label{rel-tillam}
\widehat{\Lambda}(W):= \log \mathbb{E}[e^{W(\newxi_1)}].
\end{equation}
Also, let $\mathbf{c}: \Sigma \mapsto {\mathcal X}$ be a continuous map such
that   $0$ lies in the interior $\mathscr{D}^\circ$ of the set 
 \begin{equation}
 \label{rel-calD}
  \mathscr{D} := \left\{\alpha\in \mathcal{X}^* : \widehat{\Lambda}(\langle \alpha, \mathbf{c}(\cdot) \rangle) < \infty
  \right\},  
 \end{equation}
and let $\mathscr{C}_n := \int_{\Sigma} \mathbf{c}(x)\mathscr{L}_n(dx)$. Lastly, define $F:\cX \ra \RR$ as
\begin{equation}\label{eq-f}
F(x) := \sup_{\alpha \in \mathscr{D}_\circ} \langle \alpha, x \rangle, \quad x \in \cX.
\end{equation}
 Then, $\{\mathscr{L}_n, \mathscr{C}_n\}_{n \in \NN}$ satisfies an LDP with the GRF $\mathbb{I}: \cP(\Sigma) \times \mathcal{X} \rightarrow [0,\infty]$ defined by
\begin{equation}\label{eqdef-irf}   \mathbb{I}(\nu, x) :=  \left\{\begin{array}{ll}
 H(\nu|\mu) + F\left( x - \int_\Sigma \mathbf{c} \,d\nu\right) & \textnormal{ if }  H(\nu|\mu) < \infty,\\
+\infty & \textnormal{ else,} 
 \end{array}\right. \quad \nu\in\cP(\Sigma), x\in \mathcal{X}.
\end{equation}
\end{lemma}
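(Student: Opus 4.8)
The plan is to recognize the pair $(\mathscr{L}_n,\mathscr{C}_n)$ as the empirical mean $\frac{1}{n}\sum_{i=1}^{n}\big(\delta_{\newxi_i},\mathbf{c}(\newxi_i)\big)$ of i.i.d.\ random variables valued in the locally convex space $\mathcal{M}(\Sigma)\times\mathcal{X}$, where $\mathcal{M}(\Sigma)$ denotes the finite signed Borel measures on $\Sigma$ put in duality with $C_b(\Sigma)$ (so that the relative topology on $\cP(\Sigma)\subset\mathcal{M}(\Sigma)$ is the weak topology) and $\mathcal{X}$ is put in duality with $\mathcal{X}^{*}$; one then applies the abstract Cram\'er theorem for such means and identifies the resulting Legendre-dual rate function with $\mathbb{I}$. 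As a preliminary I would extract from $0\in\mathscr{D}^{\circ}$ the facts that $\alpha\mapsto\widehat\Lambda(\langle\alpha,\mathbf{c}(\cdot)\rangle)=\log\EE[e^{\langle\alpha,\mathbf{c}(\newxi_1)\rangle}]$ is finite on a neighborhood of $0$ in $\mathcal{X}^{*}$ --- which, in the finite-dimensional setting $\dim\mathcal{X}<\infty$ relevant here, gives $\EE[e^{\theta\|\mathbf{c}(\newxi_1)\|}]<\infty$ for some $\theta>0$, hence exponential tightness of $\{\mathscr{C}_n\}$ in $\mathcal{X}$ via Cram\'er applied to $\|\mathscr{C}_n\|\le\frac{1}{n}\sum_{i=1}^{n}\|\mathbf{c}(\newxi_i)\|$ --- and that the Bochner integral $\overline{\mathbf{c}}_\nu:=\int_\Sigma\mathbf{c}\,d\nu$ is well defined whenever $H(\nu|\mu)<\infty$, by the entropy inequality $\int_\Sigma\|\mathbf{c}\|\,d\nu\le\theta^{-1}\big(H(\nu|\mu)+\log\EE[e^{\theta\|\mathbf{c}(\newxi_1)\|}]\big)$.

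Since $\{\mathscr{L}_n\}$ satisfies Sanov's LDP in $\cP(\Sigma)$ with good rate function $H(\cdot\,|\,\mu)$ and is exponentially tight there, the pair $\{(\mathscr{L}_n,\mathscr{C}_n)\}$ is exponentially tight, and the abstract Cram\'er theorem yields an LDP for it with the convex good rate function
\begin{equation*}
\mathbb{I}(\nu,x)=\sup_{\phi\in C_b(\Sigma),\ \alpha\in\mathcal{X}^{*}}\Big\{\int_\Sigma\phi\,d\nu+\langle\alpha,x\rangle-\log\EE\big[e^{\phi(\newxi_1)+\langle\alpha,\mathbf{c}(\newxi_1)\rangle}\big]\Big\},\qquad\nu\in\cP(\Sigma),\ x\in\mathcal{X}.
\end{equation*}
(To bypass the infinite-dimensional Cram\'er theorem one can instead truncate: with $\mathbf{c}_M(\cdot):=\mathbf{c}(\cdot)\min\{1,M/\|\mathbf{c}(\cdot)\|\}$ bounded and continuous, the contraction principle applied to Sanov's LDP via $\nu\mapsto(\nu,\int_\Sigma\mathbf{c}_M\,d\nu)$ gives an LDP for $\{(\mathscr{L}_n,\int_\Sigma\mathbf{c}_M\,d\mathscr{L}_n)\}$; a Chernoff bound on $\frac{1}{n}\sum_{i=1}^{n}\|\mathbf{c}(\newxi_i)\|\mathbbm{1}\{\|\mathbf{c}(\newxi_i)\|>M\}$ together with dominated convergence shows this is an exponentially good approximation of $\{(\mathscr{L}_n,\mathscr{C}_n)\}$ as $M\to\infty$, and \cite[Theorem 4.2.16]{DemZeiBook} transfers the LDP, whose good rate function is then matched to the Legendre transform above by Varadhan's lemma applied to the admissible test functions $(\phi,\alpha)$ with $\alpha\in\mathscr{D}^{\circ}$.)

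It then remains to identify $\mathbb{I}$ with the formula in \eqref{eqdef-irf}. If $H(\nu|\mu)=\infty$, the choice $\alpha=0$ already gives $\mathbb{I}(\nu,x)\ge\sup_{\phi\in C_b(\Sigma)}\big\{\int_\Sigma\phi\,d\nu-\log\int_\Sigma e^{\phi}\,d\mu\big\}=H(\nu|\mu)=\infty$ by the Donsker--Varadhan variational formula for relative entropy, so $\mathbb{I}(\nu,x)=\infty$. Suppose $H(\nu|\mu)<\infty$. For $\alpha\notin\mathscr{D}$ one has $\EE[e^{\langle\alpha,\mathbf{c}(\newxi_1)\rangle}]=\infty$, so $\log\EE[e^{\phi(\newxi_1)+\langle\alpha,\mathbf{c}(\newxi_1)\rangle}]=\infty$ for every $\phi\in C_b(\Sigma)$ and such $\alpha$ contribute $-\infty$; they may be discarded. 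For $\alpha\in\mathscr{D}$, introduce the tilt $\mu_\alpha\in\cP(\Sigma)$ with $d\mu_\alpha/d\mu=\exp\big(\langle\alpha,\mathbf{c}\rangle-\widehat\Lambda(\langle\alpha,\mathbf{c}\rangle)\big)$, so that $\log\EE[e^{\phi+\langle\alpha,\mathbf{c}\rangle}]=\widehat\Lambda(\langle\alpha,\mathbf{c}\rangle)+\log\int_\Sigma e^{\phi}\,d\mu_\alpha$; taking the supremum over $\phi\in C_b(\Sigma)$ and invoking Donsker--Varadhan again, the inner supremum becomes $\langle\alpha,x\rangle-\widehat\Lambda(\langle\alpha,\mathbf{c}\rangle)+H(\nu|\mu_\alpha)$, and the relative-entropy chain rule $H(\nu|\mu_\alpha)=H(\nu|\mu)-\langle\alpha,\overline{\mathbf{c}}_\nu\rangle+\widehat\Lambda(\langle\alpha,\mathbf{c}\rangle)$, valid because $\int_\Sigma\|\mathbf{c}\|\,d\nu<\infty$, collapses it to $H(\nu|\mu)+\langle\alpha,x-\overline{\mathbf{c}}_\nu\rangle$. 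Hence $\mathbb{I}(\nu,x)=H(\nu|\mu)+\sup_{\alpha\in\mathscr{D}}\langle\alpha,x-\overline{\mathbf{c}}_\nu\rangle$, and since $\mathscr{D}$ is convex with nonempty interior we have $\mathscr{D}\subset\overline{\mathscr{D}^{\circ}}$, so $\sup_{\alpha\in\mathscr{D}}\langle\alpha,\,\cdot\,\rangle=\sup_{\alpha\in\mathscr{D}^{\circ}}\langle\alpha,\,\cdot\,\rangle=F(\cdot)$; therefore $\mathbb{I}(\nu,x)=H(\nu|\mu)+F(x-\overline{\mathbf{c}}_\nu)$, exactly \eqref{eqdef-irf}.

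I expect the main obstacle to be carrying out this last identification rigorously with an \emph{unbounded} $\mathbf{c}$: one must justify that $\overline{\mathbf{c}}_\nu$ is well defined, that the relative-entropy chain rule and the interchange of the supremum over $\phi$ with the exponential tilt are legitimate, and that the boundary $\partial\mathscr{D}$ is irrelevant to the final supremum; and in the genuinely infinite-dimensional case one must additionally supply the exponential tightness of $\{\mathscr{C}_n\}$ needed to run the abstract Cram\'er theorem (or the truncation argument). These are precisely the issues that the hypothesis $0\in\mathscr{D}^{\circ}$ is designed to control, and for $\mathcal{X}=\mathrm{Sym}_k$, the case used in this paper, they reduce to routine verifications.
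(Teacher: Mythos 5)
You should first note that the paper does not actually prove this lemma: it is quoted verbatim from \cite[Corollary A.2]{KimLiaRam20}, which in turn derives it from \cite[Proposition 6.4]{BenDemGui01}, where the argument proceeds by truncating $\mathbf{c}$, showing the truncated pairs are exponentially good approximations, and proving the upper and lower bounds directly --- the lower bound by an explicit tilting construction that sends a vanishing fraction of sample points to infinity, which is exactly what makes points $(\nu,x)$ with $x\neq\int_\Sigma\mathbf{c}\,d\nu$ cost only $F\bigl(x-\int_\Sigma\mathbf{c}\,d\nu\bigr)$ rather than $+\infty$. Your dual-identification computation (the exponential tilt, Donsker--Varadhan, the entropy chain rule, and the observation that $\sup_{\mathscr{D}}=\sup_{\mathscr{D}^\circ}$ for a convex set with nonempty interior) is correct and is a clean explanation of why the answer has the form \eqref{eqdef-irf}.

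The genuine gap is that the load-bearing step --- that $\{(\mathscr{L}_n,\mathscr{C}_n)\}$ satisfies an LDP whose rate function \emph{is} that Legendre transform --- is never established. Your main route invokes ``the abstract Cram\'er theorem'' in $\mathcal{M}(\Sigma)\times\mathcal{X}$, but $\mathcal{M}(\Sigma)$ with the weak topology is a non-normable locally convex space, so the separable-Banach Cram\'er theorem of \cite{DemZeiBook} does not apply as stated, and the piece of its conclusion you need most --- the lower bound with rate exactly $\Lambda^*$ at points where $x\neq\int_\Sigma\mathbf{c}\,d\nu$, i.e.\ the ``approximate'' part of the approximate contraction principle --- is precisely what is nontrivial and what \cite{BenDemGui01} prove by hand. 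Your fallback truncation route has the same hole in a different place: \cite[Theorem 4.2.16]{DemZeiBook} produces only the abstract rate $\sup_{\delta>0}\liminf_{M\to\infty}\inf_{B((\nu,x),\delta)}\mathbb{I}_M$, and matching it to $\mathbb{I}$ is not a consequence of Varadhan's lemma as you assert: Varadhan's moment condition fails for tilts $\alpha$ near $\partial\mathscr{D}$, and recovering a rate function from its conjugate requires knowing a priori that the unknown rate is convex and lower semicontinuous, plus a separate argument (both inequalities, not just the upper bound) in which the escaping-mass construction reappears. Finally, the lemma is stated for an arbitrary separable Banach space $\mathcal{X}$, and there $0\in\mathscr{D}^\circ$ does not imply $\EE[e^{\theta\|\mathbf{c}(\newxi_1)\|}]<\infty$, so both your exponential tightness of $\{\mathscr{C}_n\}$ and the finiteness of $\int_\Sigma\|\mathbf{c}\|\,d\nu$ when $H(\nu|\mu)<\infty$ need different arguments in that generality; you flag this honestly, but since it is part of the statement being proved (even if only $\mathcal{X}=\mathrm{Sym}_k$ is used in this paper), it remains a gap rather than a routine verification.
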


\begin{lemma}
For any $k \in \NN$, the sequence $\{\lkz, \mathcal{C}(\lkz)\}_{n\in\NN_k}$ satisfies an LDP in $\cP(\RR^k) \times  \textnormal{Sym}_k$ with GRF $\mathbb{J}_k: \cP(\RR^k) \times \textnormal{Sym}_k \rightarrow [0,\infty]$, defined, for $\nu \in \cP(\RR^k)$ and $M \in \textnormal{Sym}_k$, to be
\begin{equation}\label{jform}
\mathbb{J}_k(\nu,M) := \left\{\begin{array}{ll}
H(\nu |\gamma^{\otimes k}) + \tfrac{1}{2} \textnormal{tr}\left(M - \int_{\RR^k} [z \otimes z] \, \nu(dz) \right) & \textnormal { if } \int_{\RR^k} [z \otimes z] \,\nu(dz) \preceq M,\\
 +\infty & \textnormal{ else.}
\end{array}\right.
\end{equation}
\end{lemma}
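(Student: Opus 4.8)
The plan is to read off this LDP from the approximate contraction principle, Lemma \ref{cor-iid}, applied with the Polish space $\Sigma = \RR^k$, the (finite-dimensional, hence separable) Banach space $\cX = \textnormal{Sym}_k$ equipped with the Frobenius norm, the i.i.d.\ sequence being the rows $\znk(1,\cdot), \dots, \znk(n,\cdot)$, which have common law $\mu = \gamma^{\otimes k}$ by the definition of $\znk$, so that $\mathscr{L}_n = \lkz$, and the continuous map $\mathbf{c}: \RR^k \to \textnormal{Sym}_k$, $\mathbf{c}(z) := z\otimes z$, for which $\mathscr{C}_n = \int_{\RR^k}[z\otimes z]\,\lkz(dz) = \mathcal{C}(\lkz)$ in the notation of \eqref{cdef}. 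Identifying $\cX^\ast = \textnormal{Sym}_k$ through the trace pairing $\langle \alpha, M\rangle = \textnormal{tr}(\alpha M)$, one has $\langle \alpha, \mathbf{c}(z)\rangle = z^\intercal \alpha z$.

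First I would compute the log-mgf appearing in \eqref{rel-tillam}: diagonalizing $\alpha \in \textnormal{Sym}_k$ and using $\EE[e^{t\eta^2}] = (1-2t)^{-1/2}$ for a standard Gaussian $\eta$ and $t < \tfrac12$,
\[
\widehat{\Lambda}\big(\langle \alpha, \mathbf{c}(\cdot)\rangle\big) = \log\EE\big[e^{\xi^\intercal \alpha \xi}\big] = -\tfrac12\log\det(I_k - 2\alpha), \qquad \xi \sim \gamma^{\otimes k},
\]
when $\alpha \prec \tfrac12 I_k$, and $+\infty$ otherwise. Thus the set $\mathscr{D}$ in \eqref{rel-calD} equals $\{\alpha\in\textnormal{Sym}_k : \alpha \prec \tfrac12 I_k\}$, which is open, convex, and contains $0$; in particular $0 \in \mathscr{D}^\circ = \mathscr{D}$, so the hypotheses of Lemma \ref{cor-iid} hold.

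Next I would evaluate the function $F$ of \eqref{eq-f}, i.e.\ $F(M) = \sup_{\alpha \prec I_k/2} \textnormal{tr}(\alpha M)$ for $M \in \textnormal{Sym}_k$. If $M$ has a negative eigenvalue with unit eigenvector $v$, then taking $\alpha = -t\,vv^\intercal \preceq 0 \prec \tfrac12 I_k$ and letting $t \to \infty$ shows $F(M) = +\infty$. If $M \succeq 0$, then for every $\alpha \prec \tfrac12 I_k$ the matrix $\tfrac12 I_k - \alpha$ is positive definite, so $\textnormal{tr}\big((\tfrac12 I_k - \alpha)M\big) \ge 0$ (the trace of a product of positive semidefinite matrices is nonnegative), whence $\textnormal{tr}(\alpha M) \le \tfrac12\textnormal{tr}(M)$, with equality approached along $\alpha = (\tfrac12 - \varepsilon) I_k$ as $\varepsilon \downarrow 0$. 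Hence $F(M) = \tfrac12\textnormal{tr}(M)$ if $M \succeq 0$ and $F(M) = +\infty$ otherwise.

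Substituting this $F$ and $\mu = \gamma^{\otimes k}$ into the rate function $\mathbb{I}$ of \eqref{eqdef-irf} produces exactly $\mathbb{J}_k$: when $H(\nu|\gamma^{\otimes k}) < \infty$, the entropy inequality applied to $z \mapsto \varepsilon\|z\|^2$ shows that $\nu$ has a finite second moment, so $\int[z\otimes z]\,\nu(dz)$ is well-defined, and $\mathbb{I}(\nu,M) = H(\nu|\gamma^{\otimes k}) + F\big(M - \int[z\otimes z]\,\nu(dz)\big)$ is finite precisely when $\int[z\otimes z]\,\nu(dz) \preceq M$, in which case it equals $H(\nu|\gamma^{\otimes k}) + \tfrac12\textnormal{tr}\big(M - \int[z\otimes z]\,\nu(dz)\big)$, matching \eqref{jform}. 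I expect the computation of $F$ on and off the positive semidefinite cone to be the only point requiring genuine care; the Gaussian quadratic-form mgf and the verification of the hypotheses of Lemma \ref{cor-iid} are routine.
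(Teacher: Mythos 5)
Your proof is correct and follows essentially the same route as the paper: both invoke the approximate contraction principle (Lemma \ref{cor-iid}) with $\Sigma=\RR^k$, $\cX=\cX^*=\textnormal{Sym}_k$, $\mathbf{c}(z)=z\otimes z$, $\mu=\gamma^{\otimes k}$, identify $\mathscr{D}=\{\alpha\in\textnormal{Sym}_k:\alpha\prec\tfrac12 I_k\}$ as an open set containing $0$, and compute $F(M)=\tfrac12\textnormal{tr}(M)$ for $M\succeq0$ and $+\infty$ otherwise. Your derivation of $F$ and the remark on finiteness of the second moment of $\nu$ when $H(\nu\mid\gamma^{\otimes k})<\infty$ are slightly more detailed than the paper's, but the argument is the same.
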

\begin{proof} 
We invoke the approximate contraction principle of Lemma \ref{cor-iid} with the following parameters:
$\Sigma = \RR^k$; $\cX = \text{Sym}_k$ and $\cX^* = \text{Sym}_k$;
$\mathbf{c}(z) := [z\otimes z]$ for $z\in \RR^k$;
$\mathscr{L}_n :=   \frac{1}{n} \sum_{j=1}^{n} \delta_{\newxi_j} \mbox{ for }
\newxi_1,\newxi_2,\dots$ i.i.d.\ random vectors with common distribution  $\gamma^{\otimes k};$
and  $\mathscr{C}_n := \int_{\RR^k}\mathbf{c}\, d\mathscr{L}_n$.
Note that 
\begin{equation} \label{observation}
(\lkz,\mathcal{C}(\lkz))   \eqdist (\mathscr{L}_n, \mathscr{C}_n), \qquad n \in \NN_k. 
\end{equation}
With $\widehat{\Lambda}$ as defined in \eqref{rel-tillam}, 
the domain $\mathscr{D}$ specified in \eqref{rel-calD} takes the form
\begin{align*} \mathscr{D} &= \left\{\zeta \in \text{Sym}_k : \log\int_{\RR^k} \exp\left( \langle \zeta, \mathbf{c}(z)\rangle \right) \gamma^{\otimes k}(dz) < \infty\right\}\\    &= \left\{\zeta \in \text{Sym}_k : \log\int_{\RR^k}\frac{1}{(2\pi)^{k/2}} \exp\left( -z^\intercal  (\tfrac{1}{2} I_k - \zeta )z\right) dz  < \infty\right\}\\    &=   \left\{\zeta \in \text{Sym}_k :  \tfrac{1}{2} I_k - \zeta \succ 0\right\}.
\end{align*}
This last expression indicates that $\mathscr{D}$ is a shifted reflection of the positive definite cone, hence open, implying that $\mathscr{D}^\circ = \mathscr{D}$. This expression for the form of $\mathscr{D}$ also makes it clear that $0 \in \mathscr{D}^\circ$. Lastly the value of the supremum in the definition of $F$ takes an explicit form due to the linearity of the trace functional and the fact that the constraint set $\mathscr{D}$ is a cone (due to the positive definiteness constraint): for $\eta \in \text{Sym}_k$, 
\begin{equation*} F(\eta) = \sup_{\zeta \in \text{Sym}_k} \left\{ \text{Tr}(\zeta \eta) : \zeta \prec \tfrac{1}{2} I_k\right\} = \left\{ \begin{array}{cl}
 \frac{1}{2}\text{Tr}(\eta) & \text{ if } \eta \succeq 0,\\
 +\infty & \text{ else.}
 \end{array}\right. 
\end{equation*}
Therefore, \eqref{observation} and Lemma \ref{cor-iid} together imply that
$\{(\lkz,\mathcal{C}(\lkz))\}_{n \in \NN_k}$  satisfies the stated LDP.  
\end{proof}

We now establish a relation between the rate function $\mathbb{J}_k$ and the rate function $\mathbb{H}_k$ of Theorem \ref{th-stldp}.

\begin{lemma} \label{lem-ieq}
For any $k \in \NN$ and  $\nu\in\cP(\RR^k)$, given $\mathbb{H}_k$ of \eqref{hk}, $\mathbb{J}_k$ of \eqref{jform}, and $\Gamma$ of\eqref{gamdef}, we have 
\begin{equation*} \mathbb{H}_k(\nu) = \widetilde{\mathbb{H}}_k(\nu) := \inf_{ M\in  \textnormal{Sym}_k} \mathbb{J}_k(\nu(\,\cdot\times \Gamma(M)^{-1}),M).
\end{equation*}
\end{lemma}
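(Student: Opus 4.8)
The plan is to reduce the claimed identity, via an explicit change of variables on $\cP(\RR^k)$, to the elementary scalar inequality $\lambda-\log\lambda\ge 1$ for $\lambda>0$ (with equality only at $\lambda=1$). First I would restrict the infimum to $M\succ 0$: by the recursion \eqref{gamdef}, $\Gamma(M)$ is a well-defined real and invertible (upper triangular, positive-diagonal) matrix exactly when $M\succ 0$, in which case Remark \ref{rmk-cholesky} gives $\Gamma(M)^\intercal\Gamma(M)=M$; for every other $M\in\textnormal{Sym}_k$ we use the convention $\mathbb{J}_k(\nu(\,\cdot\times\Gamma(M)^{-1}),M):=+\infty$, which is harmless because when $M\not\succeq 0$ the admissibility condition $\int[z\otimes z]\,\nu'\preceq M$ of \eqref{jform} fails for every $\nu'$ (covariance matrices being PSD), so $\mathbb{J}_k(\,\cdot\,,M)\equiv+\infty$. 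Fix $M\succ 0$, set $R:=\Gamma(M)$ and $S:=RR^\intercal\succ 0$, and let $\nu_M:=\nu(\,\cdot\times R^{-1})$, i.e.\ the image of $\nu$ under the linear bijection $x\mapsto xR$ of $\RR^k$.

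Next I would record three transformation formulas. (i) $\mathcal{C}(\nu_M)=R^\intercal\mathcal{C}(\nu)R$; combined with $M=R^\intercal R$ and the invertibility of $R$, this shows that the constraint $\mathcal{C}(\nu_M)\preceq M$ in \eqref{jform} is equivalent to $\mathcal{C}(\nu)\preceq I_k$ — the constraint in \eqref{hk} — independently of $M$. (ii) Because relative entropy is invariant under bijective measurable maps, $H(\nu_M\,|\,\gamma^{\otimes k})=H(\nu\,|\,\mathcal{N}(0,S^{-1}))$, where $\mathcal{N}(0,S^{-1})$ is the image of $\gamma^{\otimes k}$ under $x\mapsto xR^{-1}$, namely the centered Gaussian law on $\RR^k$ with covariance $S^{-1}$. (iii) Writing the Gaussian density ratio $\tfrac{d\mathcal{N}(0,S^{-1})}{d\gamma^{\otimes k}}(x)=(\det S)^{1/2}\exp\!\big(-\tfrac12\,x(S-I_k)x^\intercal\big)$ and integrating its logarithm against $\nu$ gives
\[
 H(\nu_M\,|\,\gamma^{\otimes k})\;=\;H(\nu\,|\,\gamma^{\otimes k})-\tfrac12\log\det S+\tfrac12\,\textnormal{tr}\!\big((S-I_k)\mathcal{C}(\nu)\big),
\]
valid irrespective of absolute continuity (both sides are simultaneously infinite, since $\mathcal{N}(0,S^{-1})$ and $\gamma^{\otimes k}$ are mutually absolutely continuous and $H(\nu\,|\,\gamma^{\otimes k})<\infty$ forces $\mathcal{C}(\nu)$ finite, making the correction terms finite).

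Substituting (i) and (iii) into \eqref{jform}, using $\textnormal{tr}\!\big(R^\intercal(I_k-\mathcal{C}(\nu))R\big)=\textnormal{tr}\!\big((I_k-\mathcal{C}(\nu))S\big)$, and cancelling the terms that involve $\mathcal{C}(\nu)$, I would obtain, when $\mathcal{C}(\nu)\preceq I_k$,
\[
 \mathbb{J}_k\big(\nu_M,M\big)\;=\;H(\nu\,|\,\gamma^{\otimes k})+\tfrac12\,\textnormal{tr}(I_k-\mathcal{C}(\nu))+\tfrac12\big(\textnormal{tr}(S)-\log\det S-k\big)\;=\;\mathbb{H}_k(\nu)+\tfrac12\big(\textnormal{tr}(S)-\log\det S-k\big),
\]
and $\mathbb{J}_k(\nu_M,M)=+\infty=\mathbb{H}_k(\nu)$ otherwise (i.e.\ when $\mathcal{C}(\nu)\not\preceq I_k$ or $H(\nu\,|\,\gamma^{\otimes k})=+\infty$). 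Since the eigenvalues $\lambda_1,\dots,\lambda_k$ of $S\succ 0$ are positive and $\lambda-\log\lambda\ge 1$ for all $\lambda>0$ with equality iff $\lambda=1$, we get $\textnormal{tr}(S)-\log\det S=\sum_{i=1}^k(\lambda_i-\log\lambda_i)\ge k$, with equality iff $S=I_k$. Therefore $\mathbb{J}_k(\nu_M,M)\ge\mathbb{H}_k(\nu)$ for every $M\in\textnormal{Sym}_k$, so $\widetilde{\mathbb{H}}_k(\nu)\ge\mathbb{H}_k(\nu)$; and the choice $M=I_k$, for which $\Gamma(I_k)=I_k$ by \eqref{gamdef} (hence $\nu_{I_k}=\nu$ and $S=I_k$), attains equality, giving $\widetilde{\mathbb{H}}_k(\nu)\le\mathbb{J}_k(\nu,I_k)=\mathbb{H}_k(\nu)$.

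The main obstacle is not conceptual but bookkeeping: carefully justifying the transpose/row-vector manipulations in formulas (i)--(iii) and disposing of the degenerate cases ($M$ not positive definite, $\nu$ singular with respect to $\gamma^{\otimes k}$, $\mathcal{C}(\nu)$ with infinite entries). The heart of the argument is the density-ratio identity in (iii) together with the one-line convexity bound $\lambda-\log\lambda\ge 1$.
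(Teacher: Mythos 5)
Your proposal is correct and follows essentially the same route as the paper's proof: rewrite the constraint as $\mathcal{C}(\nu)\preceq I_k$, compute the relative entropy under the linear change of variables $x\mapsto x\Gamma(M)$, obtain $\mathbb{J}_k(\nu(\,\cdot\times\Gamma(M)^{-1}),M)=\mathbb{H}_k(\nu)$ plus a nonnegative residual depending only on $M$, and minimize that residual to zero at $M=I_k$ via $\lambda-\log\lambda\ge 1$. The only differences are cosmetic: you express the residual in the coordinate-free form $\tfrac12\left(\textnormal{tr}(S)-\log\det S-k\right)$ with $S=\Gamma(M)\Gamma(M)^\intercal$ and minimize via eigenvalues, where the paper writes it as $\sum_i\left(\tfrac{M_{ii}-1}{2}-\log\Gamma(M)_{ii}\right)$ and bounds $\Gamma(M)_{ii}\le M_{ii}^{1/2}$ (the two residuals coincide since $\sum_i\log\Gamma(M)_{ii}=\tfrac12\log\det M$), and you additionally spell out the degenerate cases (non-positive-definite $M$, singular $\nu$, infinite second moments) that the paper leaves implicit.
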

\begin{proof} 
As noted in Remark \ref{rmk-cholesky},  for $M\in \text{Sym}_k$ and $\Gamma$ as in \eqref{gamdef}, we have $M = \Gamma(M)^\intercal  \Gamma(M)$. Given this equality, the constraint 
$M\succeq \int_{\RR^k} [x \otimes x] \, \nu(dx \times \Gamma(M)^{-1})$  
can be rewritten, using the notation $\mathcal{C}$ from \eqref{cdef}, as 
\begin{equation*} I_k \succeq \int_{\RR^k} [x \otimes x] \nu(dx) = \mathcal{C}(\nu).
\end{equation*}
If the preceding constraint is satisfied, then using the form of $\mathbb{J}_k$ in \eqref{jform} in the first equality below, the chain rule for relative entropy in the third equality, and then the form of the Gaussian distribution $\gamma^{\otimes k}$, we obtain 
\begin{align*}
\mathbb{J}_k &(\nu(\,\cdot\times \Gamma(M)^{-1}),M) \\
 &= \frac{1}{2}\sum_{i=1}^k\left(M_{ii} - \int_{\RR^k}x_i^2 \, \nu(dx \times \Gamma(M)^{-1}) \right) +  H(\nu(\,\cdot\times \Gamma(M)^{-1}) | \gamma^{\otimes k} ) \\
 &= \frac{1}{2}\sum_{i=1}^kM_{ii} - \frac{1}{2}\int_{\RR^k}y^\intercal \,\Gamma(M)\Gamma(M)^\intercal y\,\nu(dy)  +  H(\nu | \gamma^{\otimes k}(\,\cdot \times \Gamma(M)) )  \\
  &= \frac{1}{2}\sum_{i=1}^kM_{ii} - \frac{1}{2}\int_{\RR^k}y^\intercal \,\Gamma(M)\Gamma(M)^\intercal y\,\nu(dy)  -  \int_{\RR^k} \log(\tfrac{d\gamma^{\otimes k}(\,\cdot \times \Gamma(M))}{d\gamma^{\otimes k}})\, d\nu  + H(\nu | \gamma^{\otimes k})\\ 
  &= \frac{1}{2}\sum_{i=1}^kM_{ii} - \frac{1}{2}\int_{\RR^k}y^\intercal \,\Gamma(M)\Gamma(M)^\intercal y\,\nu(dy)  + \frac{1}{2}\log\det(\Gamma(M)^{-\intercal}\Gamma(M)^{-1})\\ & \quad \quad \quad+ \frac{1}{2} \int_{\RR^k} y^\intercal \, (\Gamma(M)\Gamma(M)^\intercal  - I_k)y \,\nu(dy)  + H(\nu | \gamma^{\otimes k})
 \end{align*}
 Due to the upper triangular structure of $\Gamma(M)$, we have 
 \begin{equation*} \frac{1}{2}\log\det(\Gamma(M)^{-\intercal}\Gamma(M)^{-1}) = -\log \det (\Gamma(M)) = -\sum_{i=1}^k \log \Gamma(M)_{ii}.
\end{equation*}
 Also, note that $\text{Tr}(I_k - \mathcal{C}(\nu)) = k - \int_{\RR^k} y^\intercal  y\, \nu(dy)$.
Hence, invoking the definition of $\mathbb{H}_k$ in  \eqref{hk},  
  we have
\begin{align*}
\mathbb{J}_k (\nu(\,\cdot\times \Gamma(M)^{-1}),M)   &=  \frac{1}{2}\sum_{i=1}^k(M_{ii} -1)   -\sum_{i=1}^k \log\Gamma(M)_{ii}  + \mathbb{H}_k(\nu).
\end{align*} 
Taking the infimum of the expression above over $M\in \textnormal{Sym}_k$, we see that 
\begin{align}
  \label{temp-H}
\widetilde{\mathbb{H}}_k(\nu) &= \inf_{M\in\textnormal{Sym}_k}\left\{ \sum_{i=1}^k\left( \frac{M_{ii}-1}{2}   - \log \Gamma(M)_{ii} \right)\right\}  + \mathbb{H}_k(\nu).
\end{align}
Note that by the definition of $\Gamma$ in \eqref{gamdef},
\begin{equation*}
\Gamma(M)_{ii} = \left(M_{ii} - \sum_{h=1}^{i-1} \Gamma(M)_{hi}^2 \right)^{1/2},
\end{equation*}
and by the definition of the Gram-Schmidt process, we have $M_{ii} \ge \sum_{h=1}^{i-1} \Gamma(M)_{hi}^2 \ge 0$. Thus, for all $i=1,\dots, k$, for any fixed $M_{ii}$, the maximum value of $\Gamma(M)_{ii}$ is attained when $\Gamma(M)_{hi}=0$ for $h=1,\dots, i-1$. Therefore,
once again using $M = \Gamma(M)^\intercal  \Gamma(M)$, we obtain 
\begin{align*}
 \inf_{M\in\textnormal{Sym}_k}\left\{ \sum_{i=1}^k\left( \frac{M_{ii}-1}{2}   - \log \Gamma(M)_{ii} \right)\right\}  &= \inf_{M_{ii}\ge 0, i=1,\dots, k} \left\{ \frac{1}{2} \sum_{i=1}^k \left(M_{ii}-1 - \log M_{ii}\right) \right\} \\  &=   \frac{1}{2} \sum_{i=1}^k \inf_{M_{ii}\ge 0 } \left\{M_{ii}-1 - \log M_{ii} \right\}, 
\end{align*}
which is clearly equal to zero.  
Together with \eqref{temp-H}, this shows that $\mathbb{H}_k = \widetilde{\mathbb{H}}_k$.
\end{proof}

\begin{lemma}\label{lem-compcat}
Fix $k \in \NN$ and consider the following set of probability measures,
\begin{equation}\label{k2}
  \cK := \left\{ \nu \in \cP(\RR^k) : \int_{\RR^k} \|x\|^2 \nu(dx) \le k\right\}.
\end{equation}
For any $q\in (0, 2)$, the set $\cK\subset \cP_2(\RR^d)$ is compact with respect to the $q$-Wasserstein topology. In addition, $\cK$ is convex and non-empty.
\end{lemma}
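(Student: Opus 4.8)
The plan is to handle non-emptiness and convexity directly, and to obtain compactness by proving sequential compactness and then invoking metrizability of the $q$-Wasserstein topology. Non-emptiness is clear, since $\delta_0 \in \cK$ (or $\gamma^{\otimes k} \in \cK$, whose second moment equals $k$). Convexity follows because $\nu \mapsto \int_{\RR^k}\|x\|^2\,\nu(dx)$ is affine, so any convex combination of elements of $\cK$ still has second moment at most $k$. I would also record at the outset that Hölder's (or Jensen's) inequality with exponent $2/q > 1$ gives $\int_{\RR^k}\|x\|^q\,d\nu \le k^{q/2} < \infty$ for every $\nu \in \cK$, so $\cK \subset \cP_2(\RR^k) \subset \cP_q(\RR^k)$ and the $q$-Wasserstein topology is meaningful on $\cK$.

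For compactness, since $\cW_q$ metrizes the $q$-Wasserstein topology (Definition \ref{def-Wasserstein}), it suffices to show that every sequence $\{\nu_n\}_{n\in\NN} \subset \cK$ has a subsequence converging in $\cW_q$ to a limit in $\cK$. First, Markov's inequality yields $\sup_n \nu_n(\{x : \|x\| > R\}) \le k/R^2 \to 0$ as $R \to \infty$, so $\{\nu_n\}$ is tight; by Prokhorov's theorem a subsequence (not relabeled) satisfies $\nu_n \Rightarrow \nu$ for some $\nu \in \cP(\RR^k)$. Since $x \mapsto \|x\|^2$ is continuous and nonnegative, the portmanteau theorem gives $\int_{\RR^k}\|x\|^2\,d\nu \le \liminf_n \int_{\RR^k}\|x\|^2\,d\nu_n \le k$, so $\nu \in \cK$.

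The only substantive step is then to upgrade $\nu_n \Rightarrow \nu$ to $\cW_q(\nu_n,\nu) \to 0$, i.e., to $\int\|x\|^q\,d\nu_n \to \int\|x\|^q\,d\nu$; this is where the hypothesis $q < 2$ enters crucially. For every $R > 0$ one has $\int_{\{\|x\|>R\}}\|x\|^q\,d\nu_n \le R^{q-2}\int\|x\|^2\,d\nu_n \le k R^{q-2}$, and the same bound holds with $\nu$ in place of $\nu_n$; since $q - 2 < 0$, this shows $\{\|x\|^q\}$ is uniformly integrable along the sequence. A standard truncation argument — apply the weak convergence to the bounded continuous function $x \mapsto \min(\|x\|^q, R)$ and then let $R \to \infty$, controlling the tails by the displayed bound (and the corresponding bound for $\nu$) — then yields convergence of the $q$-th moments, hence $\cW_q(\nu_n,\nu) \to 0$. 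This establishes sequential compactness of $\cK$, and therefore, by metrizability, its compactness. The main obstacle is exactly this moment-convergence step; the rest is immediate, and the step itself becomes routine once one observes that the strict inequality $q < 2$ converts the uniform second-moment bound into a uniform integrability estimate for $\|x\|^q$.
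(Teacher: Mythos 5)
Your proof is correct and complete. The paper itself does not spell out an argument for this lemma: it simply cites \cite[Lemma 3.14]{KimRam18} and remarks that the $k$-dimensional case is an elementary modification of the $k=1$ case proved there. What you have written out is exactly the standard argument one would expect to find behind that citation — tightness and Prokhorov to extract a weakly convergent subsequence, lower semicontinuity of $\nu\mapsto\int\|x\|^2\,d\nu$ to show the limit stays in $\cK$, and then the key observation that the uniform second-moment bound together with $q<2$ yields uniform integrability of $\|x\|^q$, which upgrades weak convergence to $\cW_q$-convergence via truncation. The convexity and non-emptiness steps are immediate, as you note. One small stylistic point: rather than saying metrizability reduces compactness to sequential compactness, you could equally note that Prokhorov plus the uniform integrability estimate shows $\cK$ is $\cW_q$-relatively compact, and the lower semicontinuity of the second moment shows $\cK$ is $\cW_q$-closed; this avoids explicitly invoking the sequential-to-topological equivalence, though of course both are valid since $\cW_q$ is a metric.
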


\begin{proof}
  The proof is an elementary modification of the proof of the $k=1$ case given in \cite[Lemma 3.14]{KimRam18}.
\end{proof}

\begin{proof}[Proof of Theorem \ref{th-stldp}]
  Let $\Gamma$ be as defined in \eqref{gamdef}. Due to \eqref{empalt} and Lemma \ref{lem-rho}, we have
\begin{equation*} \lk \eqdist \lkz(\,\cdot \times \tfrac{\rnk}{\sqrt{n}})  = \lkz\left(\,\cdot \times \Gamma\left(\mathcal{C}(\lkz)\right)\right).
\end{equation*}
The image of $\mathcal{C}$ is positive semi-definite matrices, so as noted in Remark \ref{rmk-cholesky}, the map $\Gamma$ maps a matrix to its Cholesky decomposition, hence  $M \mapsto \Gamma(M)$ is continuous. By Slutsky's theorem, the map 
\begin{equation*} \cP(\RR^k)\times \textnormal{Sym}_k \ni (\mu,M) \mapsto \mu(\,\cdot \,\times \Gamma(M)) \in \cP(\RR^k),
\end{equation*}
is also continuous. An application of the contraction principle
to the map above yields an LDP for the sequence $\{\lk\}_{n\in\NN_k}$ in $\cP(\RR^k)$ (i.e., with respect to the weak topology), with GRF  
\begin{align*}
 \inf_{\nu\in\cP(\RR^k), M\in  \textnormal{Sym}_k} \{ \mathbb{J}_k(\mu,M) : \nu = \mu(\,\cdot\times \Gamma(M))\} = \inf_{ M\in  \textnormal{Sym}_k} \mathbb{J}_k(\nu(\,\cdot\times \Gamma(M)^{-1}),M) = \mathbb{H}_k(\nu),
\end{align*}
where the last equality is due to Lemma \ref{lem-ieq}.

Fix $q \in (0,2)$. In order to establish the LDP for $\{\lk\}_{n\in\NN_k}$ in $\cP_q(\RR^k)$ (i.e., with respect to the stronger $q$-Wasserstein topology), by Corollary 4.2.6 of \cite{DemZeiBook}, 
it suffices to show exponential tightness of $\{\lk\}_{n\in\NN_k}$ in the $q$-Wasserstein topology.  Let $\cK$ be the set defined in \eqref{k2}, which is compact (with respect to the $q$-Wasserstein topology)  due to Lemma \ref{lem-compcat}. Note that $\int_{\RR^k} |x|^2 \lk(dx) = k$ a.s., so $\PP(\lk \in \cK_{2,k}^c) = 0$, and
hence, $\log \PP(\lk \in \cK_{2,k}^c) = -\infty$ for all $n\in\NN_k$, trivially implying the exponential tightness of $\{\lk\}_{n\in\NN_k}$.

Lastly, the strict convexity of $\mathbb{H}_k$ follows from the strict convexity of the relative entropy $H(\cdot |\gamma^{\otimes k})$ and the linearity of the covariance map $\mathcal{C}$.
\end{proof}


\section{Proof of the quenched LDP}
\label{sec-quenched}

In this section, we state the proof of Theorem \ref{th-quenched}. As a precursor, we state two lemmas that will assist with part (iii) of the theorem. 

\begin{lemma}\label{lem-monotone}
Fix $m\in\NN$, and let $\mathscr{F}$ be a set of functions from $\RR^m$ to $\RR$ such that  every $f\in \mathscr{F}$ is symmetric about 0 and convex. Then, defining $g: \RR^m \ra \RR$ as
\begin{equation*}
g(x) := \inf_{f\in\mathscr{F}} f(x), \quad x\in \RR^m,
\end{equation*}
the function $g$ is monotone with respect to scaling in the sense that for all $x\in \RR^m$, the mapping
\begin{equation}\label{mono}
\RR_+ \ni c \mapsto g(cx) \in \RR 	
\end{equation}
is non-decreasing.
\end{lemma}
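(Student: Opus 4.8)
The plan is to reduce the statement to the elementary observation that a convex function symmetric about the origin is non-decreasing along every ray emanating from the origin, and then note that a pointwise infimum of such radially monotone functions inherits this monotonicity.

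First I would fix $x \in \RR^m$ and scalars $0 \le c_1 \le c_2$; the case $c_2 = 0$ forces $c_1 = 0$ and is trivial, so I may assume $c_2 > 0$. Set $\lambda := \tfrac12\bigl(1 + c_1/c_2\bigr)$, and note that $c_1 \le c_2$ guarantees $\lambda \in [\tfrac12, 1] \subseteq [0,1]$, while one has the algebraic identity $c_1 x = \lambda\,(c_2 x) + (1-\lambda)(-c_2 x)$. Then, for an arbitrary $f \in \mathscr{F}$, convexity of $f$ yields $f(c_1 x) \le \lambda f(c_2 x) + (1-\lambda) f(-c_2 x)$, and symmetry of $f$ about $0$ gives $f(-c_2 x) = f(c_2 x)$, so that $f(c_1 x) \le f(c_2 x)$.

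Next I would pass to the infimum over $\mathscr{F}$: since $f(c_1 x) \le f(c_2 x)$ for every $f \in \mathscr{F}$, it follows that $g(c_1 x) = \inf_{f \in \mathscr{F}} f(c_1 x) \le f(c_2 x)$ for every $f \in \mathscr{F}$, and taking the infimum over $f$ on the right-hand side yields $g(c_1 x) \le g(c_2 x)$. As $x \in \RR^m$ and $0 \le c_1 \le c_2$ were arbitrary, the map $c \mapsto g(cx)$ in \eqref{mono} is non-decreasing, as claimed.

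There is no genuine obstacle in this argument; it is a short, direct computation. The only points meriting a moment's attention are verifying that the convex-combination weight $\lambda$ indeed lies in $[0,1]$ — which is precisely where the hypothesis $c_1 \le c_2$ is used — and dispatching the degenerate case $c_2 = 0$ separately.
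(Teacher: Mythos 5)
Your proof is correct, and it takes a slightly different route from the paper's. The paper decomposes $c_1 x$ as a convex combination of $c_2 x$ and $0$, namely $c_1 x = \tfrac{c_1}{c_2}(c_2 x) + \tfrac{c_2 - c_1}{c_2}\cdot 0$, and then invokes the intermediate observation that symmetry plus convexity forces $f$ to have a global minimum at $0$ (so that $f(0) \le f(c_2 x)$) to finish. You instead decompose $c_1 x$ as a convex combination of the antipodal points $c_2 x$ and $-c_2 x$, with weight $\lambda = \tfrac12(1 + c_1/c_2)$, and then apply symmetry directly in the form $f(-c_2 x) = f(c_2 x)$; this bypasses the auxiliary claim about the global minimum at the origin. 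Both arguments are elementary and land on the same pointwise inequality $f(c_1 x) \le f(c_2 x)$, which then passes to the infimum identically. Your version is marginally more self-contained, while the paper's makes the underlying geometric picture (radial monotonicity away from the minimizer at $0$) slightly more explicit. One small improvement in your write-up over the paper's: you explicitly dispatch the degenerate case $c_2 = 0$, whereas the paper implicitly works with $c_1 < c_2$ without comment.
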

\begin{proof}
Fix $x \in \RR^m$ and $c_1 < c_2 \in \RR_+$. For any $f \in \mathscr{F}$, the symmetry about 0 and convexity of $f$ implies that $f$ has a global minimum at 0, hence
\begin{align*}
f(c_1 x) &= f(\tfrac{c_1}{c_2}\times  c_2 x + \tfrac{c_2 - c_1}{c_2}\times 0 ) \\
  &\le  \tfrac{c_1}{c_2} f(c_2x) + \tfrac{c_2 - c_1}{c_2} f(0) \\
  &= f(c_2x) + \tfrac{c_2 - c_1}{c_2} (f(0) - f(c_2x))\\
  &\le f(c_2x),
\end{align*}
where the first inequality follows from convexity, and the second inequality is due to the global minimum at 0. Taking the infimum over all $f\in\mathscr{F}$ on both sides, we find that $g(c_1x) \le g(c_2 x)$, completing the proof.
\end{proof}

\begin{lemma}\label{lem-u1n}
Fix $m\in\NN$, and let $\mathsf{x} = \{\mathsf{x}_n\}_{n\in\NN}$ denote a sequence of $\RR^m$-valued random variables that satisfies an LDP with GRF $I_\mathsf{x}$. Let $U$ be a uniformly distributed random variable on $[0,1]$ independent of $\{\mathsf{x}_n\}_{n\in\NN}$. If for all $x\in \mathbb{R}^m$, the mapping $\RR_+ \ni c\mapsto I_\mathsf{x}(cx) \in [0, \infty]$ is  non-decreasing, then the scaled sequence $\{U^{1/n}\mathsf{x}_n\}_{n\in\NN}$ satisfies an LDP with GRF $I_\mathsf{x}$.
\end{lemma}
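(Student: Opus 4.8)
The plan is to realize $U^{1/n}\mathsf{x}_n$ as the image of the \emph{independent} pair $(\mathsf{x}_n, U^{1/n})$ under the continuous scaling map $(x,c)\mapsto cx$, apply the contraction principle, and then use the monotonicity hypothesis to check that the resulting rate function collapses back to $I_{\mathsf x}$. The real content lies only in this last identification; everything before it is routine.

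First I would record an elementary LDP for the scalar sequence $\{U^{1/n}\}_{n\in\NN}$. Since $\PP(U^{1/n}\le a)=\PP(U\le a^n)=a^n$ for $a\in[0,1]$, a direct computation of $\tfrac1n\log\PP(U^{1/n}\in B)$ for Borel $B\subseteq\RR$ (using that $U^{1/n}\in(0,1]$ a.s.) shows that $\{U^{1/n}\}_{n\in\NN}$ satisfies an LDP in $\RR$ with the rate function $I_U(c):=-\log c$ for $c\in(0,1]$ and $I_U(c):=+\infty$ otherwise. In particular $I_U\ge 0$, $I_U(1)=0$, and $I_U$ is a GRF, since its level sets $\{c:I_U(c)\le\alpha\}=[e^{-\alpha},1]$ are compact.

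Next, because $\{\mathsf{x}_n\}_{n\in\NN}$ and $\{U^{1/n}\}_{n\in\NN}$ are independent and each satisfies an LDP with a good rate function, the sequence of joint laws $\{(\mathsf{x}_n,U^{1/n})\}_{n\in\NN}$ satisfies an LDP in $\RR^m\times\RR$ with GRF $(x,c)\mapsto I_{\mathsf x}(x)+I_U(c)$ (the standard LDP for products of independent sequences with good rate functions; see, e.g., \cite[Exercise 4.2.7]{DemZeiBook}). Applying the contraction principle \cite[Theorem 4.2.1]{DemZeiBook} to the continuous map $h(x,c):=cx$ from $\RR^m\times\RR$ into $\RR^m$ then yields an LDP for $\{U^{1/n}\mathsf{x}_n\}_{n\in\NN}$ with GRF
\[
  \widetilde{I}(y)\ :=\ \inf\bigl\{I_{\mathsf x}(x)+I_U(c):\ cx=y\bigr\}\ =\ \inf_{c\in(0,1]}\bigl\{I_{\mathsf x}(y/c)-\log c\bigr\},
\]
where the second equality uses that $I_U$ is infinite off $(0,1]$ and that $cx=y$ forces $x=y/c$ when $c>0$.

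Finally I would identify $\widetilde{I}$ with $I_{\mathsf x}$. For the inequality $\widetilde{I}\ge I_{\mathsf x}$: for each $c\in(0,1]$ we have $y/c=(1/c)\,y$ with $1/c\ge 1$, so the hypothesis that $t\mapsto I_{\mathsf x}(ty)$ is non-decreasing on $\RR_+$ gives $I_{\mathsf x}(y/c)\ge I_{\mathsf x}(y)$, while $-\log c\ge 0$; hence every term in the infimum is at least $I_{\mathsf x}(y)$. For the reverse inequality, take $c=1$ to get $\widetilde{I}(y)\le I_{\mathsf x}(y)$. Thus $\widetilde{I}=I_{\mathsf x}$, which is a GRF, completing the proof. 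The only point requiring a little care is the bookkeeping for the scalar LDP of $U^{1/n}$ — getting the support $(0,1]$ and the boundary behaviour of $I_U$ right so that the contracted infimum genuinely runs over $c\in(0,1]$ — but the substantive step is the last one, where the monotonicity assumption is exactly what makes the perturbation $U^{1/n}\to 1$ invisible at the large-deviation scale.
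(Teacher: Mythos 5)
Your proof is correct and follows essentially the same route as the paper: an LDP for $\{U^{1/n}\}$ with rate $-\log u$ on $(0,1]$ (which the paper simply cites from \cite[Lemma 3.3]{GanKimRam17}), a joint LDP by independence, the contraction principle for $(x,c)\mapsto cx$, and the monotonicity hypothesis to show the contracted rate function collapses to $I_{\mathsf x}$ (the paper phrases this as the infimum being attained at $u=1$, which is the same observation as your two-sided bound).
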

\begin{proof}
Due to \cite[Lemma 3.3]{GanKimRam17}, the sequence $\{U^{1/n}\}_{n\in \NN}$ satisfies an LDP with the good rate function
\begin{equation*}
  I_U(u) := \left\{ \begin{array}{ll}
 - \log u & u\in(0,1] ; \\
   +\infty & \textnormal{ else. }
 \end{array}\right.
\end{equation*}
By independence, the sequence $\{U^{1/n},\,\,\mathsf{x}_n\}_{n\in \NN}$ satisfies a joint LDP with the GRF $I_{U, \mathsf{x} }: \RR\times\RR^m$ defined as $I_{U, \mathsf{x} }(u,w) :=I_U(u) + I_\mathsf{x}(x)$. By the contraction principle, the scaled sequence $\{U^{1/n}\mathsf{x}_n\}_{n\in \NN}$ satisfies an LDP with the rate function $I$, where for $x\in \RR^m$,
\begin{align*}
  I(x) &:= \inf_{u\in \RR, \, y\in \RR^m}\{ I_U(u) + I_{\mathsf{x}}(y) : uy = x\}
  = \inf_{u\in(0,1]}\{ -\log u + I_{\mathsf{x} }(\tfrac{x}{u}) \}.
\end{align*}
The mapping $u\mapsto 1/u$ is monotonically decreasing, which when combined with the assumption on $I_\mathsf{x}$ implies that $u\mapsto I_\mathsf{x}(x/u)$ is monotonically decreasing. Since $u\mapsto -\log u$ is also monotonically decreasing, the infimum above is attained at $u = 1$, hence $I(x) = I_\mathsf{x}(x)$ for all $x\in \RR^m$.
\end{proof}

\begin{proof}[Proof of Theorem \ref{th-quenched}]
  Suppose Assumption \ref{as-quenched} holds for some $\{\xi_j\}_{j \in \NN}$, $\mathbf{r}$, $\rho$, $q_\star  > 0$, and $T \le \infty$, all as defined in the statement of the assumption. We first prove an LDP for the following $\RR^{k+1}$-valued sequence: 
\begin{equation}\label{rnp2}
  \Rna := \left( n^{-1/2}\,\mathbf{a}_{n,k}^\intercal  \xi^{(n)} , \,\,\frac{1}{n}\sum_{i=1}^n \mathbf{r}(\xi_i) \right), \qquad n \in \NN. 
\end{equation}
In  terms of the log mgf $\Lambda$ of $(\xi_1, \mathbf{r}(\xi_1))$,  the scaled log mgf of  $\Rna$ takes the form:
for $t_1 \in \RR^k$ and $t_2 \in \RR$, 
\begin{align*}
\frac{1}{n}\log\EE\left[\exp(n\,\langle t, \Rna\rangle)\right] &=  \frac{1}{n} \log \EE\left[ \exp\left(\sum_{i=1}^n   \left( \sqrt{n}\, \xi_i \langle t_1, \mathbf{a}_{n,k}(i,\cdot)\rangle + t_2 \,\mathbf{r}(\xi_i)  \right) \right) \right]\\
&=  \frac{1}{n} \log \prod_{i=1}^n \EE\left[ \exp\left( \sqrt{n} \, \xi_i  \langle t_1, \mathbf{a}_{n,k}(i,\cdot)\rangle + t_2 \, \mathbf{r}(\xi_i) \right)  \right]\\
&= \frac{1}{n} \sum_{i=1}^n \Lambda\left( \, \langle t_1, \sqrt{n} \mathbf{a}_{n,k}(i,\cdot)\rangle, t_2 \right) \\
&= \Psi_{\mathsf{L}_{n,k}^{\mathbf{a}}} (t_1, t_2), 
\end{align*}
with $\Psi_{\cdot}$ equal to the integrated log mgf functional  defined in \eqref{psi}.

Fix $t_1\in \RR^k$. For $t_2 \ge T$, both sides equal $+\infty$ due to Remark \ref{rem-quenched}. For $t_2 < T$, due to the $q_\star$-Wasserstein continuity pointed out in Remark \ref{rem-quenched}, together with the $q_\star$-Wasserstein convergence of \eqref{eq-qwass}, we take the limit as $n \rightarrow \infty$ to find
   \begin{align*}
    \lim_{n\ra\infty}\frac{1}{n}\log\EE\left[\exp(n\,\langle t, \Rna\rangle)\right] &=\Psi_\nu (t_1, t_2).  
  \end{align*}
Due to the lower semicontinuity and essential smoothness of $\Psi_{\nu}$, which follow from Assumption \ref{as-quenched}(iii), the G\"artner-Ellis theorem (see, e.g., \cite[Theorem 2.3.6]{DemZeiBook}) yields the LDP for the sequence $\{\Rna\}_{n\in\NN}$ in $\RR^{k+1}$ with the GRF $\Psi_{\nu}^*$ from \eqref{psistar}.   

Due to the representation of $X^{(n)}$ given by Assumption \ref{as-quenched}(i), we have
\begin{equation*}
  n^{-1/2}\mathbf{A}_{n,k}^\intercal X^{(n)} \eqdist  \rho\left((\Rna)_2\right)(\Rna)_1,  
\end{equation*}
where $\rho$ is continuous. The LDP for $\{\Rna\}_{n\in\NN}$ and the contraction principle applied to the continuous 
mapping $\RR^{k+1} \ni (\tau_1, \tau_2) \mapsto  \rho(\tau_2) \tau_1  \in \RR^k$ yield an LDP for $\{n^{-1/2} \mathbf{a}_{n,k}^\intercal X^{(n)}\}_{n\in\NN_k}$ in $\RR^k$ with good rate function  $\bar{\mathcal{J}}_\nu^{\sf{qu}}$ defined to be
\begin{equation*}
  \bar{\mathcal{J}}_\nu^{\sf{qu}}(x) := \inf_{\tau_1\in\RR^k,\,\tau_2\in \RR_+} \left\{ \Psi_\nu^*(\tau_1,\tau_2) : \tau_1 \rho(\tau_2) = x \right\} ,\quad x\in \RR^k. 	
\end{equation*}
The equivalence of $\bar{\mathcal{J}}_\nu^{\sf{qu}}$ to the rate function $\mathcal{J}_\nu^{\sf{qu}}$ in \eqref{def-jqu} follows from using the constraint $\tau_1 \rho(\tau_2) = x$ to substitute for the first argument in $\Psi_\nu^*$.  This proves part (i) of the theorem. 

In turn, the LDP from part (i) implies part (ii) of the theorem since by Corollary \ref{slln}, almost surely, 
${\mathcal W}_{q_\star}(\lk^\mathbf{A}, \gamma^{\otimes k}) =  {\mathcal W}_{q_\star}(\lk, \gamma^{\otimes k})\rightarrow 0$ as $n \rightarrow \infty$. 

We turn to the final claim (iii). Given the assumption on symmetry of $\Lambda$, it is apparent from the definition \eqref{psi} that $\Psi_\nu$ is symmetric in its first argument, and then from the definition of the Legendre transform \eqref{psistar} that $\Psi_\nu^*$ is also symmetric in its first argument. Applying Lemma \ref{lem-monotone} with dimension $m = k$, the set of symmetric convex functions $\mathscr{F} = \{  \RR^k \ni x\mapsto \Psi_\nu^*(\frac{x}{\rho(\tau)}, \tau) \in \RR \}_{\tau \in \RR_+}$, and $g = \mathcal{J}_\nu^{\sf{qu}}$, we find that the mapping $\RR_+ \ni c \mapsto \mathcal{J}_\nu^{\sf{qu}}(c x) \in [0, \infty]$ is non-decreasing. An application of Lemma \ref{lem-u1n}  with $\mathsf{x}_n = n^{-1/2} \mathbf{a}_{n,k}^\intercal X^{(n)}$ for $n\in \NN$ and $I_\mathsf{x} = \mathcal{J}_\nu^{\sf{qu}}$ completes the proof.
\end{proof}

\section{Proof of the variational formula}\label{sec-pfva} 

In this section, we prove Theorem \ref{th-var}, primarily through an application of Theorem \ref{th-stldp} and Sion's minimax theorem \cite{Sio58}. 
We start with preliminary results in Lemmas \ref{lem-bound}, \ref{lem-lmgrep}, and \ref{lem-phistarvar}.  Throughout,
recall the definition of $\mathbb{H}_k$  from \eqref{hk}. 

\begin{lemma}\label{lem-bound}
Suppose Assumption \ref{as-quenched} holds, with associated $T$ and $\Psi_\nu$, $\nu \in \cP(\RR^k)$, and recall the empirical measure $\lk$ from \eqref{eq-lnk}.
For $t_1 \in \RR^k$, $t_2 < T$, and $0 < \newkappa < \infty$,
  the following   condition holds: 
\begin{equation}\label{varbd}
 \limsup_{n\ra\infty} \frac{1}{n} \log \EE\left[ e^{\newkappa n \Psi_{\mathsf{L}_{n,k}}(t_1,t_2)}\right] < \infty\, .
\end{equation}
	
\end{lemma}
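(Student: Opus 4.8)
The plan is to bound the random quantity $\Psi_{\lk}(t_1,t_2)$ deterministically by something of the form $C_{t_2}(1 + \text{random moment of } \lk)$, using the polynomial growth condition in Assumption \ref{as-quenched}(ii), and then control the exponential moment of that random moment via the explicit representation of $\lk$ in terms of the Gaussian matrix $\znk$ and the Bartlett decomposition. First I would write, for $t_2 < T$ fixed,
\[
\Psi_{\lk}(t_1,t_2) = \int_{\RR^k}\Lambda(\langle t_1,x\rangle, t_2)\,\lk(dx)
\le C_{t_2}\int_{\RR^k}\bigl(1 + |\langle t_1,x\rangle|^{q_\star}\bigr)\lk(dx)
\le C_{t_2}\Bigl(1 + \|t_1\|_2^{q_\star}\int_{\RR^k}\|x\|_2^{q_\star}\lk(dx)\Bigr),
\]
using \eqref{lambd} and Cauchy--Schwarz. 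So it suffices to show $\limsup_n \tfrac1n\log\EE\bigl[\exp\bigl(\newkappa' n \int \|x\|_2^{q_\star}\lk(dx)\bigr)\bigr] < \infty$ for every $\newkappa' < \infty$, where $\newkappa' = \newkappa C_{t_2}\|t_1\|_2^{q_\star}$ (the constant and the $+1$ only contribute a finite additive factor).

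Next I would use the representation \eqref{empalt}--\eqref{xzr}: $\lk \eqdist \lkz(\,\cdot \times \tfrac{\rnk}{\sqrt n})$, so that $\int \|x\|_2^{q_\star}\lk(dx) \eqdist \tfrac1n\sum_{i=1}^n \bigl\|\sqrt n\,\znk(i,\cdot)\,\rnk^{-1}\bigr\|_2^{q_\star} \le n^{q_\star/2}\|\rnk^{-1}\|_{\mathrm{op}}^{q_\star}\cdot \tfrac1n\sum_{i=1}^n \|\znk(i,\cdot)\|_2^{q_\star}$. Since $\mathbf{A}_{n,k} = \qnk = \znk\rnk^{-1}$ has rows of total squared norm summing to $k$, one also has the cleaner bound $\int\|x\|_2^{q_\star}\lk(dx) \eqdist \tfrac1n\sum_i \|\sqrt n\,\mathbf{A}_{n,k}(i,\cdot)\|^{q_\star} \le \bigl(\tfrac1n\sum_i n\|\mathbf{A}_{n,k}(i,\cdot)\|^2\bigr)^{q_\star/2} = k^{q_\star/2}$ by Jensen, since $q_\star < 2$ — that is, $\int\|x\|_2^{q_\star}\lk(dx)$ is \emph{almost surely bounded} by the constant $k^{q_\star/2}$! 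This is exactly where Assumption \ref{as-quenched}(v), $q_\star<2$, is used. Given this deterministic a.s. bound, $\exp(\newkappa' n \int\|x\|_2^{q_\star}\lk(dx)) \le \exp(\newkappa' k^{q_\star/2} n)$ a.s., so the left side of the displayed $\limsup$ is at most $\newkappa' k^{q_\star/2} < \infty$, and tracing back, $\limsup_n \tfrac1n\log\EE[e^{\newkappa n \Psi_{\lk}(t_1,t_2)}] \le \newkappa C_{t_2}(1 + k^{q_\star/2}\|t_1\|_2^{q_\star}) < \infty$, which is \eqref{varbd}.

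The main obstacle, such as it is, is recognizing that no genuine concentration/large-deviation estimate for $\rnk$ is needed: the Jensen bound with $q_\star<2$ makes the relevant functional of $\lk$ uniformly bounded, so the exponential moment is trivial. The only care points are (i) handling the case $t_1 = 0$ and the additive constants cleanly, and (ii) making sure the equality in distribution in \eqref{empalt} is applied to the full functional $\int\|x\|_2^{q_\star}\lk(dx)$ rather than pointwise — both routine. I would present the Jensen route as the main argument and mention the $\rnk^{-1}$ route only if one wanted the intermediate-power bound for other purposes.
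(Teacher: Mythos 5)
Your proof is correct, but it takes a genuinely different and more elementary route than the paper's. The paper fixes $t_1$, uses the identity $\mathbf{A}_{n,k}t_1 \eqdist \|t_1\|_2\Theta^{(n)}$ with $\Theta^{(n)}$ uniform on the unit sphere of $\RR^n$, invokes the sub-independence of $(|\Theta_1^{(n)}|,\dots,|\Theta_n^{(n)}|)$ from Barthe et al.\ to factor the expectation, represents $\sqrt{n}\Theta_i^{(n)}$ through Gaussians, and applies the reverse Fatou lemma, ending with the Gaussian exponential moment $\EE[\exp(\newkappa C_{t_2}\|t_1\|_2^{q_\star}|Z_1|^{q_\star})]$, which is where $q_\star<2$ enters there. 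You instead observe that the bound is deterministic: since $\mathbf{A}_{n,k}\in\vnk$, one has $\sum_{i=1}^n\|\mathbf{A}_{n,k}(i,\cdot)\|_2^2=\mathrm{tr}(\mathbf{A}_{n,k}^\intercal\mathbf{A}_{n,k})=k$ surely, hence $\int_{\RR^k}\|x\|_2^2\,\lk(dx)=k$, and Jensen's inequality with the concave map $t\mapsto t^{q_\star/2}$ (this is where you use $q_\star<2$) gives $\int_{\RR^k}\|x\|_2^{q_\star}\,\lk(dx)\le k^{q_\star/2}$; combined with \eqref{lambd} and Cauchy--Schwarz this yields $\Psi_{\lk}(t_1,t_2)\le C_{t_2}\bigl(1+\|t_1\|_2^{q_\star}k^{q_\star/2}\bigr)$ almost surely, so \eqref{varbd} holds with a nonrandom bound and no concentration input at all. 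This is valid and shorter: it avoids the sub-independence machinery and the Gaussian representation entirely, it exploits the same a.s.\ identity $\int\|x\|_2^2\,\lk(dx)=k$ that the paper uses elsewhere (exponential tightness in the proof of Theorem \ref{th-stldp}), it would even work at $q_\star=2$ and for arbitrary deterministic frames in $\vnk$, and it delivers the moment condition of Varadhan's lemma for every $\newkappa$. Two minor points of hygiene: you should note that one may take $C_{t_2}\ge 0$ without loss of generality (a negative constant only improves the bound, but the monotonicity step should not be applied blindly), and the Bartlett/$\rnk^{-1}$ detour in your second paragraph is superfluous, since $\lk$ is by definition the empirical measure of the rows $\sqrt{n}\mathbf{A}_{n,k}(i,\cdot)$, so the row-norm identity applies directly, as your ``cleaner bound'' already does.
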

\begin{proof}

  Let $\Theta^{(n)} := (\Theta_1^{(n)}, \dots, \Theta_n^{(n)})$ denote a random vector distributed uniformly on the
  Euclidean
  sphere in $\RR^n$ of radius 1. For $t_1 \in \RR^k$, the  random vector $\mathbf{A}_{n,k} t_1$ lies on the
   Euclidean
  sphere in $\RR^n$
  of radius $\|t_1\|_2$ and has a law invariant to orthogonal transformation (due to the law of $\mathbf{A}_{n,k}$); hence, $\mathbf{A}_{n,k} t_1 \eqdist \|t_1\|_2 \Theta^{(n)}$. Fix $t_1\in \RR^k$ and $t_2 < T$,  let $C_{t_2}$ and $q_\star$ be as in  Assumption \ref{as-quenched}(ii), and for $i\in \NN$, define $g_i: \RR_+ \ra \RR_+$ as
\begin{equation*}
  g_i(x) = \exp \left(\newkappa C_{t_2} [1 + (\sqrt{n}\|t_1\|_2 x)^{q_\star} ]\right), \quad x\in \RR_+.
 \end{equation*}
When combined, the relation $\mathbf{A}_{n,k} t_1 \eqdist \|t_1\|_2 \Theta^{(n)}$, the bound of Assumption \ref{as-quenched}(ii) and 
the sub-independence
of $(|\Theta_1^{(n)}|, \dots, |\Theta_n^{(n)}|)$ as given by \cite[Theorem 2.11(2)]{BarGamLozRou10} with $p=2$ therein, yield  
\begin{equation}\label{intermediate}
\EE\left[ e^{\newkappa n \Psi_{\mathsf{L}_{n,k}}(t_1,t_2)}\right] = \EE \left[\prod_{i=1}^n \exp \left( \newkappa \Lambda (\sqrt{n}\|t_1\|_2 \Theta_i^{(n)}, t_2) \right) \right] \le \EE\left[\prod_{i=1}^n g_i(|\Theta_i^{(n)}|) \right]   \le   \prod_{i=1}^n \EE\left[ g_i(|\Theta_i^{(n)}|)\right].
\end{equation}
Now, let $Z^{(n)} := (Z_1, \dots, Z_n)$,  and note that for each $i = 1, \ldots, n$, $\sqrt{n}\Theta_i^{(n)} \eqdist \sqrt{n}Z_1 / \|Z^{(n)}\|_2 \xrightarrow{\textnormal{a.s.}} Z_1$. Therefore, taking the limit superior, as $n \to \infty$, in  \eqref{intermediate} and applying the reverse Fatou lemma, we obtain 
\begin{align*}
\limsup_{n\ra \infty} \frac{1}{n}\log \EE\left[ e^{\newkappa n \Psi_{\mathsf{L}_{n,k}}(t_1,t_2)}\right]  &\le  \limsup_{n\ra \infty} \log \EE\left[\exp\left(\delta C_{t_2}[1 + (\|t_1\|_2 \tfrac{\sqrt{n}|Z_1|}{\|Z^{(n)}\|_2})^{q_\star}]\right)\right] \\
  &\le \newkappa C_{t_2} +   \log \EE\left[\exp(\delta C_{t_2}\|t_1\|_2^{q_\star} \, |Z_1|^{q_\star}) \right]. 
\end{align*}
Since the last term on the right-hand side is finite for all $t_1 \in \RR^k$ because $q_\star < 2$, \eqref{varbd} follows. 
\end{proof}

\begin{lemma} \label{lem-lmgrep}
  Suppose Assumption \ref{as-quenched} holds, with associated 
  $\{\xi_j\}_{j\in \NN}$, $\mathbf{r}$, $T$, and $\Psi_\nu$, $\nu \in \cP(\RR^k)$, 
  and let $\mathbf{A}_{n,k}$ be drawn from the Haar measure $\sigma_{n,k}$ on $\vnk$, independently of $\{\xi_j\}_{j\in \NN}$. For $n\in \NN$, define
 \begin{equation}\label{phin} \Phi_n(t_1,t_2) :=
    \frac{1}{n} \log \EE \left[\exp\left(\sqrt{n}\,  \xi^{(n)} \mathbf{A}_{n,k} t_1 + t_2 \sum_{i=1}^n \mathbf{r}(\xi_i)   \right)  \right], \quad t_1 \in \RR^k, \, t_2 \in \RR,
 \end{equation}
 where $\xi^{(n)} := (\xi_1, \xi_2, \ldots, \xi_n)$. 
 Then,  for $t_1\in \RR^k$ and $t_2 \in \RR$,  
  \begin{equation*}
 \lim_{n \rightarrow \infty} \Phi_n(t_1,t_2) = \Phi(t_1,t_2), 
\end{equation*}
where, with $\cK$ equal to the set defined in \eqref{k2}, 
 \begin{equation}\label{eq-phivar} \Phi(t_1,t_2) := \sup_{\nu \in \cP(\RR^k)} \left\{ \Psi_\nu(t_1,t_2) - \mathbb{H}_k(\nu)\right\}
   = \sup_{\nu \in \cK} \left\{ \Psi_\nu(t_1,t_2) - \mathbb{H}_k(\nu)\right\}. 
 \end{equation}
  \end{lemma}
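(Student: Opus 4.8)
The plan is to condition on the Haar-distributed matrix $\mathbf{A}_{n,k}$, rewrite $\Phi_n(t_1,t_2)$ as an exponential integral against the law of the empirical measure $\lk$ of \eqref{eq-lnk}, and then apply Varadhan's lemma using the LDP for $\{\lk\}_{n\in\NN_k}$ furnished by Theorem \ref{th-stldp}.

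First I would observe that, because $\mathbf{A}_{n,k}$ is independent of $\{\xi_j\}_{j\in\NN}$ and the $\xi_j$ are i.i.d., conditioning on $\mathbf{A}_{n,k}$ and using $\sqrt{n}\,\xi^{(n)}\mathbf{A}_{n,k}t_1=\sum_{i=1}^n\xi_i\langle\sqrt{n}\mathbf{A}_{n,k}(i,\cdot),t_1\rangle$ together with the definition \eqref{lambda} of $\Lambda$ gives
\begin{equation*}
\Phi_n(t_1,t_2)=\frac{1}{n}\log\EE\Bigl[\exp\Bigl(\sum_{i=1}^n\Lambda\bigl(\langle\sqrt{n}\mathbf{A}_{n,k}(i,\cdot),t_1\rangle,\,t_2\bigr)\Bigr)\Bigr]=\frac{1}{n}\log\EE\bigl[e^{n\Psi_{\lk}(t_1,t_2)}\bigr],
\end{equation*}
by the definitions \eqref{psi} of $\Psi_{\cdot}$ and \eqref{eq-lnk} of $\lk$. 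If $t_2\ge T$, then by Assumption \ref{as-quenched}(ii) one has $\Lambda(y,t_2)=+\infty$ for every $y\in\RR$, so $\Phi_n(t_1,t_2)=+\infty$; at the same time $\Psi_\nu(t_1,t_2)=+\infty$ for every $\nu\in\cP(\RR^k)$, while $\mathbb{H}_k(\gamma^{\otimes k})=0$, so $\Phi(t_1,t_2)=+\infty$ as well, and there is nothing to prove. Thus I would assume $t_2<T$ from here on.

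Next, for $t_2<T$, I would invoke Varadhan's lemma (e.g., \cite[Theorem 4.3.1]{DemZeiBook}) for the functional $\nu\mapsto\Psi_\nu(t_1,t_2)$. Assumption \ref{as-quenched}(v) gives $q_\star\in(0,2)$, so Theorem \ref{th-stldp} (with $q=q_\star$) provides an LDP for $\{\lk\}_{n\in\NN_k}$ in $\cP_{q_\star}(\RR^k)$ with the good rate function $\mathbb{H}_k$; Remark \ref{rem-quenched} provides the continuity of $\nu\mapsto\Psi_\nu(t_1,t_2)$ on $\cP_{q_\star}(\RR^k)$; and Lemma \ref{lem-bound}, applied with any fixed $\delta>1$, verifies the exponential moment hypothesis. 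Varadhan's lemma then yields $\lim_{n\to\infty}\Phi_n(t_1,t_2)=\sup_{\nu\in\cP_{q_\star}(\RR^k)}\{\Psi_\nu(t_1,t_2)-\mathbb{H}_k(\nu)\}$.

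Finally I would identify this supremum with the two expressions in \eqref{eq-phivar}: since $\mathbb{H}_k(\nu)=+\infty$ unless $\mathcal{C}(\nu)\preceq I_k$, which forces $\int_{\RR^k}\|x\|^2\nu(dx)=\textnormal{tr}\,\mathcal{C}(\nu)\le k$, i.e., $\nu\in\cK$, and since $q_\star<2$ implies $\cK\subset\cP_2(\RR^k)\subset\cP_{q_\star}(\RR^k)$, the supremum is unchanged whether it is taken over $\cK$, over $\cP_{q_\star}(\RR^k)$, or over all of $\cP(\RR^k)$. I expect the main obstacle to be the verification of the hypotheses of Varadhan's lemma in the $q_\star$-Wasserstein topology; of these, the only genuinely nontrivial one is the exponential moment bound, which is precisely the already-established Lemma \ref{lem-bound}, so once it and the continuity assertion of Remark \ref{rem-quenched} are in hand the remaining identifications are routine bookkeeping.
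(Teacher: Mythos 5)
Your proposal is correct and follows essentially the same route as the paper's proof: conditioning on $\mathbf{A}_{n,k}$ to write $\Phi_n(t_1,t_2)=\frac{1}{n}\log\EE[e^{n\Psi_{\mathsf{L}_{n,k}}(t_1,t_2)}]$, disposing of $t_2\ge T$ trivially, and for $t_2<T$ applying Varadhan's lemma with the LDP of Theorem \ref{th-stldp}, the $q_\star$-Wasserstein continuity from Remark \ref{rem-quenched}, and the moment bound of Lemma \ref{lem-bound}, then identifying the supremum over $\cP_{q_\star}(\RR^k)$, $\cP(\RR^k)$, and $\cK$ via $\mathbb{H}_k=\infty$ off $\cK$. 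The only difference is that you spell out the last identification slightly more explicitly than the paper does, which is harmless.
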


\begin{proof}
Due to the independence of $\xi_1,\xi_2,\dots$, and their independence from $\mathbf{A}_{n, k}$,  we can write, for $n \in \NN$, 
\begin{align*}
 \Phi_n(t_1,t_2) &=  \frac{1}{n} \log \EE \left[\prod_{i=1}^n \EE\left[\exp\left(\sqrt{n}\,  \xi_i \left(\mathbf{A}_{n,k} t_1\right)_i + t_2  \mathbf{r}(\xi_i)   \right) \big | \mathbf{A}_{n,k} \right] \right]\\
  &=
  \frac{1}{n} \log \EE \left[\exp\left(\sum_{i=1}^n\Lambda(\sqrt{n}\,\langle \mathbf{A}_{n,k}(i,\cdot) , t_1 \rangle  ,\, t_2 \right) \right]\\
  &=
  \frac{1}{n} \log \EE \left[\exp\left(n \,\Psi_{\mathsf{L}_{n,k}}(t_1,t_2) \right ) \right],
\end{align*}
for $t_1\in \RR^k$ and $t_2 \in \RR$, 
where $\lk$ is as in \eqref{eq-lnk}. Now, let $T \le \infty$ and $q_\star \in (0,2)$ be as specified in Assumption \ref{as-quenched}(ii).
For $t_2 \geq T$, by Remark \ref{rem-quenched}, both 
$\Phi_n(\cdot,  t_2)$ and $\Phi(\cdot,t_2)$ are identically equal to infinity, and so the limit holds trivially.
Now, suppose $t_2 < T$. Recall from Theorem \ref{th-stldp} that the sequence $\{\mathsf{L}_{n,k}\}_{n\in\NN_k}$ satisfies an LDP in $\cP_{q}(\RR^k)$ for all $q \in (0, 2)$, with the GRF $\mathbb{H}_k$. Due to the bound in Assumption \ref{as-quenched}(ii), the map $\cP(\RR) \ni \nu\mapsto \Psi_\nu(t_1,t_2)\in \RR$ is continuous with respect to the $q_\star$-Wasserstein topology, and we know $q_\star \in (0,2)$ due to Assumption \ref{as-quenched}(v). By Varadhan's lemma  \cite[Theorem 4.3.1]{DemZeiBook} which is applicable due to Lemma \ref{lem-bound}, it follows that the limit of $\Phi_n(t_1, t_2)$ is given by $\Phi(t_1, t_2)$ defined in  \eqref{eq-phivar}.

To complete the proof of the lemma, it only remains to
establish the last equality in \eqref{eq-phivar}, but this is an immediate consequence of the fact that $\mathbb{H}_k (\nu) = \infty$ 
for $\nu \notin \cK$. 
	
\end{proof}

\begin{lemma}\label{lem-phistarvar}
  Suppose Assumption \ref{as-quenched} holds, and for each $\nu \in \cP(\RR^k)$,   
  let  $\Psi_\nu$ be as defined  in \eqref{psi},  let $\Psi_\nu^*$ denote its Legendre transform, as specified in
  \eqref{psistar}, and let $\cK \subset \cP(\RR^k)$ be the set defined in \eqref{k2}. 
  Then the Legendre transform $\Phi^*$ of the function $\Phi$ defined in \eqref{eq-phivar} satisfies,
for $\tau_1\in\RR^k$, and $\tau_2\in \RR$, 
\begin{equation}\label{phistarvar} {\Phi}^*(\tau_1, \tau_2) 
= \inf_{\nu\in \cK} \left\{\Psi_\nu^*(\tau_1,\tau_2) + \mathbb{H}_k(\nu) \right\} = 
\inf_{\nu\in \cP(\RR^k)} \left\{\Psi_\nu^*(\tau_1,\tau_2) + \mathbb{H}_k(\nu) \right\}.
\end{equation}
\end{lemma}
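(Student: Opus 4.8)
The plan is to compute $\Phi^*$ directly from the variational representation $\Phi(t_1,t_2) = \sup_{\nu\in\cK}\{\Psi_\nu(t_1,t_2) - \mathbb{H}_k(\nu)\}$ established in Lemma \ref{lem-lmgrep}, exchanging the order of two suprema and then invoking a minimax argument. Writing out the definition,
\begin{equation*}
\Phi^*(\tau_1,\tau_2) = \sup_{t_1\in\RR^k, t_2\in\RR} \left\{ \langle t_1,\tau_1\rangle + t_2\tau_2 - \sup_{\nu\in\cK}\left[\Psi_\nu(t_1,t_2) - \mathbb{H}_k(\nu)\right]\right\},
\end{equation*}
I would first rewrite the inner supremum so that the whole expression becomes $\sup_{t}\inf_{\nu\in\cK}\{\langle t_1,\tau_1\rangle + t_2\tau_2 - \Psi_\nu(t_1,t_2) + \mathbb{H}_k(\nu)\}$. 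The goal is to swap $\sup_t$ and $\inf_\nu$ to obtain $\inf_{\nu\in\cK}\sup_t\{\cdots\} = \inf_{\nu\in\cK}\{\Psi_\nu^*(\tau_1,\tau_2) + \mathbb{H}_k(\nu)\}$, which is the first claimed equality; the second equality is then immediate because $\mathbb{H}_k(\nu)=\infty$ off $\cK$, exactly as at the end of the proof of Lemma \ref{lem-lmgrep}.

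The key steps, in order: (1) Recall from Lemma \ref{lem-compcat} that $\cK$ is a convex, compact (in the $q$-Wasserstein topology, $q\in(0,2)$), nonempty subset of $\cP(\RR^k)$; this supplies the compactness of one variable needed for minimax. (2) Check the structural hypotheses of Sion's minimax theorem \cite{Sio58} for the function $(t,\nu)\mapsto g(t,\nu) := \langle t_1,\tau_1\rangle + t_2\tau_2 - \Psi_\nu(t_1,t_2) + \mathbb{H}_k(\nu)$ on $\RR^{k+1}\times\cK$: for fixed $\nu$, $g(\cdot,\nu)$ is concave in $t$ (since $\Psi_\nu$ is convex, being an integral of log-mgfs, hence $-\Psi_\nu$ is concave and the rest is affine); for fixed $t$, $g(t,\cdot)$ is convex and lower semicontinuous in $\nu$ on $\cK$ — convexity because $\nu\mapsto\Psi_\nu(t_1,t_2)$ is affine (it is $\int\Lambda(\langle t_1,x\rangle,t_2)\,\nu(dx)$) and $\mathbb{H}_k$ is convex (relative entropy plus a linear covariance term, cf. the end of the proof of Theorem \ref{th-stldp}), and lower semicontinuity because $\mathbb{H}_k$ is a good rate function hence lsc, while $\nu\mapsto\Psi_\nu(t_1,t_2)$ is actually $q_\star$-Wasserstein continuous by Remark \ref{rem-quenched} (and $q_\star<2$ by Assumption \ref{as-quenched}(v), so this continuity holds in the topology making $\cK$ compact). (3) Apply Sion's theorem to conclude $\sup_t\inf_{\nu\in\cK} g = \inf_{\nu\in\cK}\sup_t g$. (4) Evaluate $\sup_{t_1,t_2}\{\langle t_1,\tau_1\rangle + t_2\tau_2 - \Psi_\nu(t_1,t_2)\} = \Psi_\nu^*(\tau_1,\tau_2)$ by definition \eqref{psistar}, which gives the first equality of \eqref{phistarvar}; then extend the infimum from $\cK$ to all of $\cP(\RR^k)$ using $\mathbb{H}_k\equiv\infty$ outside $\cK$.

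I expect the main obstacle to be a careful verification of the semicontinuity/continuity conditions that make Sion's theorem applicable — in particular, making sure that the topology on $\cK$ in which it is compact (the $q$-Wasserstein topology for suitable $q\in(0,2)$) is simultaneously a topology in which $\nu\mapsto\Psi_\nu(t_1,t_2)$ is continuous (which needs the growth bound \eqref{lambd} with $q_\star<2$ via Remark \ref{rem-quenched}, so one should take $q = q_\star$) and in which $\mathbb{H}_k$ is lower semicontinuous (which follows since $\mathbb{H}_k$ is the good rate function of the LDP of Theorem \ref{th-stldp} in $\cP_{q_\star}(\RR^k)$). A secondary point requiring care is the possibility that $\Psi_\nu(t_1,t_2) = +\infty$ when $t_2\geq T$: as in the proof of Lemma \ref{lem-lmgrep}, for $t_2\geq T$ both sides of \eqref{phistarvar} behave consistently (the relevant suprema are unaffected since those $t$ contribute $-\infty$ to $g(t,\nu)$ after the sign flip), so one can either restrict attention to $t_2<T$ throughout or handle the degenerate case separately with a one-line remark. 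Everything else is bookkeeping with Legendre transforms.
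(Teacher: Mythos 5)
Your proposal is correct and follows essentially the same route as the paper's proof: both derive the identity by rewriting $\Phi^*$ as a $\sup_t\inf_\nu$ of the same bivariate functional, verify compactness and convexity of $\cK$ (Lemma~\ref{lem-compcat}), check the convexity/concavity and semicontinuity hypotheses, and invoke Sion's minimax theorem to swap the order, with the restriction to $t_2 < T$ (i.e., $\cD_T := \RR^k \times (-\infty,T)$) disposing of the degenerate $\Psi_\nu = \infty$ case. One small point in your favor: for lower semicontinuity of $\nu \mapsto F_\tau(\nu,t)$ one actually needs \emph{upper} semicontinuity of $\nu\mapsto\Psi_\nu(t)$ (because of the minus sign), and you correctly supply this via the full continuity asserted in Remark~\ref{rem-quenched}, whereas the paper attributes this bullet to the lower semicontinuity in Assumption~\ref{as-quenched}(iii), which concerns regularity in $t$ rather than in $\nu$.
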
 
\begin{proof}
  First, note that the second equality in \eqref{phistarvar} holds because $\mathbb{H}_k(\nu) = \infty$ for $\nu\not\in \cK$.
  Next, fix the following:  
\begin{itemize} 
\item let $\Lambda$, $T$ be as in  Assumption \ref{as-quenched}(ii), and define $\cD_T :=\RR^k \times (-\infty, T)$; 
\item let $q_\star \in (0, 2)$ be as in Assumption \ref{as-quenched}(ii), and let $\cM_{q_\star}(\RR^k)$	denote the space of finite signed measures (not necessarily probability measures) on $\RR^k$, equipped with the $q_\star$-Wasserstein topology.  
\end{itemize}

Fix $\tau = (\tau_1, \tau_2) \in \RR^{k}\times \RR$. Then by the definition  \eqref{psistar} of $\Psi_\nu^*$,
\begin{align}
  \Psi_\nu^*(\tau_1,\tau_2)
  &=
  \sup_{(t_1,t_2)\in\RR^{k}\times \RR} \{ \langle \tau_1,t_1\rangle + \tau_2 t_2  - \Psi_\nu(t_1,t_2) \} \notag\\
   &=  \sup_{(t_1,t_2) \in \cD_T} \{ \langle \tau_1, t_1\rangle + \tau_2 t_2 - \Psi_\nu(t_1,t_2) \} \label{legendrepsi},
\end{align}
where the  second equality holds because, by Remark \ref{rem-quenched},
$\Psi_\nu(t_1,t_2) = \infty$ if $t_2 \geq T$. 
Thus, the right-hand side of \eqref{phistarvar} is equal to 
$\inf_{\nu \in \cK} \sup_{t=(t_1,t_2) \in \cD_T} \inf_{\nu \in \cK} F_{\tau} (\nu,t),$ 
where
\begin{equation*} F_{\tau}(\nu, t) := \langle \tau_1, t_1\rangle  + \tau_2 t_2 - \Psi_\nu(t_1,t_2) + \mathbb{H}_k(\nu), \quad \nu \in \cP(\RR^k), \,t=(t_1,t_2) \in \RR^{k} \times \RR. 
\end{equation*}
On the other hand, by the definition of $\Phi^*$ and the
representation \eqref{eq-phivar} for $\Phi$, 
\begin{eqnarray*}
  \Phi^*(\tau_1, \tau_2) & = &  \sup_{(t_1,t_2) \in \RR^{k+1}} \{ \langle \tau_1, t_1 \rangle   + \tau_2 t_2 - \Phi(t_1,t_2)\}
  \\ & = &
  \sup_{t=(t_1,t_2) \in \RR^{k+1}} \inf_{\nu \in \cK} F_{\tau} (\nu,t),\\
  & = & \sup_{t=(t_1,t_2) \in \cD_T} \inf_{\nu \in \cK}  F_{\tau} (\nu,t), 
\end{eqnarray*}
where the last equality uses the fact that 
for $t_2 > T$, $\Psi_\nu(t_1,t_2) = \infty$ and hence, $F_{\tau}(\nu,t) = -\infty$ (see  Remark \ref{rem-quenched}). 
Thus, to prove the
first equality in \eqref{phistarvar}, it suffices to show that for all 
$(\tau_1,\tau_2)  \in \RR^{k}\times \RR$,  
\begin{equation}\label{minmax} \inf_{\nu \in \cK} \sup_{(t_1,t_2) \in\cD_T}
 F_\tau(\nu, (t_1,t_2)) = \sup_{(t_1,t_2) \in \cD_T} \inf_{\nu \in \cK}  F_\tau(\nu, (t_1,t_2)). 
\end{equation}

To justify the exchange of infimum and supremum in \eqref{minmax}, we verify the conditions of the minimax theorem \cite[Corollary 3.3]{Sio58}. That is, for $(\tau_1,\tau_2)\in \RR^{k}\times \RR$, we note that
\begin{itemize} 
\item the set $\cD_T = \RR^k \times (-\infty,T)$ is a convex subset of the topological vector space $\RR^{k+1}$; 
\item due to  Lemma \ref{lem-compcat}, $\cK$ is a convex compact subset of the topological vector space $\cM_{q_\star}(\RR^k)$; 
\item for $t = (t_1,t_2) \in \cD_T$: the lower semicontinuity of $F_\tau(\cdot, t)$ follows from the lower semicontinuity of $\nu \to \Psi_\nu(t)$ due to Assumption \ref{as-quenched}(iii) and of $\mathbb{H}_k$ (as it is a GRF); the convexity of $F_\tau(\cdot, t)$ follows from the linearity of $\nu\mapsto \Psi_\nu(t)$ and the convexity of $\mathbb{H}_k$, which was established in Theorem \ref{th-stldp}; 
\item for $\nu \in \cK$: the lower semicontinuity of $t \to \Psi_\nu(t)$  on $\cD_T$ follows from Assumption \ref{as-quenched}(iii); the convexity of $\Psi_\nu$ on $\cD_T$ follows from linearity of expectation, the definition \eqref{psi}, and the fact that $\Lambda$ is convex since it is a log mgf;
\item
  since $t\mapsto \langle \tau_1, t_1\rangle + \tau_2t_2$ is continuous and linear, it follows that $F_\tau(\nu, \cdot)$ is upper semicontinuous and concave on $\cD_T$.
\end{itemize} 
Due to the conditions verified above, the minimax theorem can be applied to conclude that \eqref{minmax}, and hence, the desired first equality in \eqref{phistarvar}, holds.  This completes the proof of the lemma.
\end{proof} 

\begin{proof}[Proof of Theorem \ref{th-var}]
  Let $\{\xi_j\}_{j \in \NN}$ and $\mathbf{r}$  be as in Assumption \ref{as-quenched}, let $\xi^{(n)}:= (\xi_1, \ldots, \xi_n)$ and 
define the sequence $\{R^{(n)}\}_{n\in\NN}$ in $\RR^{k+1}$ by
\begin{equation}\label{rnp1} R^{(n)} := \left( n^{-1/2}\,\mathbf{A}_{n,k}^\intercal  \xi^{(n)} , \,\,\frac{1}{n}\sum_{i=1}^n \mathbf{r}(\xi_i) \right), \qquad n \in \NN. 
\end{equation}
First note that the law of $\mathbf{A}_{n,k}$ is invariant to orthogonal transformation and independent of $\xi^{(n)}$, hence $\mathbf{A}_{n,k}^\intercal \tfrac{\xi^{(n)}}{\|\xi^{(n)}\|_2} \eqdist  	\mathbf{A}_{n,k}(1, \cdot)$ 
and $\mathbf{A}_{n,k}^\intercal \tfrac{\xi^{(n)}}{\|\xi^{(n)}\|_2}$ is independent of $\xi^{(n)}$; we refer to \cite[Lemma 6.3]{GanKimRam17} for the proof of the simpler case when $k = 1$. As a consequence,
\begin{equation*} R^{(n)} \eqdist \left(\frac{1}{n}\sqrt{n} \mathbf{A}_{n,k}(1,\cdot) \|\xi^{(n)}\|_2, \frac{1}{n} \sum_{i=1}^n \mathbf{r}(\xi_i)\right). 
\end{equation*}
Define the $\RR^{k+2}$-valued sequence of random variables,
\begin{equation*} S^{(n)} := \left( \mathbf{A}_{n,k}(1,\cdot), \frac{1}{n}\|\xi^{(n)}\|_2^2, \frac{1}{n} \sum_{i=1}^n \mathbf{r}(\xi_i)\right), \quad n\in\NN.
\end{equation*}
Since by part (iv) of Assumption \ref{as-quenched}, the domain of $\newlambda$, the log mgf of $(\xi_1^2, \mathbf{r}(\xi_1))$,
contains a  neighborhood of the origin,
  by Cram\'er's theorem  
$\{\left(\|\xi^{(n)}\|_2^2, \frac{1}{n} \sum_{i=1}^n \mathbf{r}(\xi_i)\right)\}_{n \in \NN}$ satisfies an LDP
in $\RR^2$ with the convex GRF $\newlambda^*$, equal  to the 
Legendre transform of $\newlambda$.

 The independence of $\mathbf{A}_{n,k}$ from $\{\xi_j\}_{j\in \NN}$, along with 
 Theorem 3.4 of \cite{BarGamLozRou10} (applied to the case of $p=2$ therein, with their canonically projected $X^{(k)}$ equivalent to our $\mathbf{A}_{n,k}(1, \cdot)$) then implies that the sequence $\{S^{(n)}\}_{n\in\NN}$
 satisfies an LDP with the convex GRF $J:\RR^{k+2}\ra[0,\infty]$ defined by
\begin{equation*} J(a,b,c) := -\tfrac{1}{2} \log(1-\|a\|_2^2) + \widehat{J}(b,c),
\end{equation*}
for $a\in \RR^k$ such that $\|a\|_2^2 <1$ and $b,c\in \RR$. Then, by the contraction principle, $\{R^{(n)}\}_{n\in\NN}$ satisfies an LDP with the GRF $J_R:\RR^{k+1}\ra \RR$ defined as follows: 
\begin{equation*} J_R(x,z) := \inf_{y\in \RR: y > \|x\|^2} J(xy^{-1/2}, y, z),   \quad x \in \RR^k, z \ge 0. 
\end{equation*}
Note that $J_R$ is convex due to \cite[Lemma 6.2]{GanKimRam17} and Theorem 5.3 of \cite{Roc70}.

Let $\Phi$ be as in \eqref{eq-phivar} and $T$ be as in Assumption \ref{as-quenched}(ii). 
Fix $t_1 \in \RR^k$, $t_2< T$, and let $t = (t_1, t_2)$. For $0 < \newkappa < \infty$, 
\begin{equation}\label{repnrl}
\EE [e^{\newkappa n \langle t, R^{(n)} \rangle}] =\EE\left[ e^{\newkappa n \Psi_{\mathsf{L}_{n,k}}(t_1,t_2)}\right].
\end{equation}
Since $t_2 < T$,  due to Lemma \ref{lem-bound} and the observation \eqref{repnrl}, we see that $\limsup_{n\ra\infty}\frac{1}{n}\log \EE [e^{\newkappa n \langle t, R^{(n)} \rangle}]$ is finite. By Varadhan's lemma \cite[Theorem 4.3.1]{DemZeiBook} and the finiteness condition (4.3.3) therein applied to any $\newkappa > 1$, we find that 
\begin{equation}\label{eqeq}
 \Phi(t_1,t_2) = \lim_{n\ra\infty}\frac{1}{n}\log \EE [e^{ n \langle t, R^{(n)} \rangle}] = \sup_{\tau_1 \in \RR^k,\tau_2 \in \RR} \{\langle t_1, \tau_1\rangle + t_2\tau_2 - J_R(\tau_1,\tau_2)\}. 
\end{equation}
where the first equality above holds by Lemma \ref{lem-lmgrep} on  observing that $\Phi_n$ in \eqref{phin} is the scaled log mgf of $R_n$.

Now fix $t_1 \in \RR^k$ and $t_2 \ge T$. 
We  claim that \eqref{eqeq} continues to hold,  but now with both sides equal to infinity. 
The fact that $\Phi(t_1,t_2) = \infty$ follows from the definition \eqref{eq-phivar} of $\Phi$ and the observation that
$\Phi_\nu(t_1,t_2) = \infty$ when $t_2 \ge T$ (by Remark \ref{rem-quenched}).  
To show that the term  on the right-hand side   of \eqref{eqeq} is also equal to infinity, for $s_2 \in \RR$, define $\widetilde{\Lambda}(s_2) := \Lambda(0, s_2)$. Note that $\widetilde{\Lambda}$ is the log mgf of $\mathbf{r}(\xi_1)$. Due to Assumption \ref{as-quenched}(iv),  the domain of $\widetilde{\Lambda}$ contains a non-empty neighborhood around 0, hence by Cram\'er's theorem
the sequence $\{\frac{1}{n}\sum_{i=1}^n\mathbf{r}(\xi_i)\}_{n\in\NN}$ satisfies an LDP in $\RR$ with GRF $\widetilde{\Lambda}^*$. However, due to the contraction principle and the continuity of the projection map, we also know that $\widetilde{\Lambda}^*(\tau_2) = \inf_{\tau_1\in \RR^k} J_R(\tau_1, \tau_2)$ for all $\tau_2 \in \RR$. Note that this infimum is attained at some $\tau_1^*\in\RR^k$ because, as a good rate function, $J_R$ is lower semicontinuous with compact level sets. Therefore, on the right-hand side of \eqref{eqeq}, if $t_2 \ge T$, then
\begin{align*}
\sup_{\tau_1 \in \RR^k, \tau_2 \in \RR} \{ \langle t_1, \tau_1\rangle + t_2 \tau_2 - J_R(\tau_1, \tau_2)\} &\ge \sup_{\tau_2 \in \RR} \{ \langle t_1, \tau_1^*\rangle + t_2 \tau_2 - J_R(\tau_1^*, \tau_2)\} = \langle t_1, \tau_1^*\rangle +  \widetilde{\Lambda}(t_2) = \Lambda(0, t_2) = \infty,
\end{align*}
where the first equality used the identity $(\widetilde{\Lambda}^*)^* = \widetilde{\Lambda}.$ Hence,   \eqref{eqeq} holds for \emph{all} $t_1 \in \RR^k$ and $t_2\in \RR$.

Note that \eqref{eqeq} shows that $\Phi = J_R^*$.  
Due to the convexity of $J_R$ and Legendre duality (see, e.g., \cite[Lemma 4.5.8]{DemZeiBook}), we have that $J_R = \Phi^*$. Hence, applying the contraction principle to the LDP for $\{R^{(n)}\}_{n\in\NN}$, we find that the annealed rate function $\mathcal{J}^{\sf{an}}$ of Theorem \ref{th-annealed} can be written as
\begin{align*}
\mathcal{J}^{\sf{an}}(x) &= \inf_{\tau_1\in\RR^k, \tau_2 \in \RR : \tau_1 \rho(\tau_2) = x} \Phi^*(\tau_1,\tau_2)\\	 
&= \inf_{\tau_1\in\RR^k, \tau_2 \in \RR : \tau_1 \rho(\tau_2) = x}  \inf_{\nu \in \cP(\RR^k)} \{ \Psi_\nu^*(\tau_1,\tau_2) + \mathbb{H}_k(\nu)\}\\  
&=  \inf_{\nu \in \cP(\RR^k)} \inf_{\tau_1\in\RR^k, \tau_2 \in \RR : \tau_1 \rho(\tau_2) = x}  \{ \Psi_\nu^*(\tau_1,\tau_2) + \mathbb{H}_k(\nu)\}\\	 
&= \inf_{\nu\in \cP(\RR^k)} \{ \mathcal{J}_\nu^{\sf{qu}}(x) + \mathbb{H}_k(\nu) \},
\end{align*} 
where the second equality invokes \eqref{phistarvar} of Lemma \ref{lem-phistarvar}, and the last equality relies on \eqref{def-jqu}. 
This completes the proof of Theorem \ref{th-var}.  
\end{proof}

\vskip 8mm
{ 
\parskip 0mm
\noindent Division of Applied Mathematics \\
\noindent Brown University \\
\noindent steven\_kim@alumni.brown.edu\\
\noindent kavita\_ramanan@brown.edu
}


\begin{thebibliography}{BGLCR10}
\bibitem[AGPT18]{AloGutProTha18}
D.~Alonso-Guti\'{e}rrez, J.~Prochno, and C.~Th\"{a}le.
\newblock Large deviations for high-dimensional random projections of
  $\ell_p^n$-balls.
\newblock {\em Advances in Applied Mathematics}, 99:1--35, 2018.

\bibitem[Aid10]{Aid10}
E.~Aid{\'e}kon.
\newblock Large deviations for transient random walks in random environment on
  a {G}alton--{W}atson tree.
\newblock {\em Annales de l'Institut Henri Poincar{\'e}, Probabilit{\'e}s et
  Statistiques}, 46(1):159--189, 2010.

\bibitem[BADG01]{BenDemGui01}
G.~Ben~Arous, A.~Dembo, and A.~Guionnet.
\newblock Aging of spherical spin glasses.
\newblock {\em Probability Theory and Related Fields}, 120(1):1--67, 2001.

\bibitem[Bar33]{Bar33}
M.~Bartlett.
\newblock On the theory of statistical regression.
\newblock {\em Proceedings of the Royal Society of Edinburgh}, 53:260--283,
  1933.

\bibitem[BGLCR10]{BarGamLozRou10}
F.~Barthe, F.~Gamboa, L.~Lozada-Chang, and A.~Rouault.
\newblock Generalized {D}irichlet distributions on the ball and moments.
\newblock {\em ALEA Latin American Journal of Probability and Mathematical
  Statistics}, 7:319--340, 2010.

\bibitem[CGZ00]{ComGanZei00}
F.~Comets, N.~Gantert, and O.~Zeitouni.
\newblock Quenched, annealed and functional large deviations for
  one-dimensional random walk in random environment.
\newblock {\em Probability Theory and Related Fields}, 118(1):65--114, 2000.

\bibitem[DF84]{DiaFre84}
P.~Diaconis and D.~Freedman.
\newblock Asymptotics of graphical projection pursuit.
\newblock {\em The Annals of Statistics}, 12(3):793--815, 1984.

\bibitem[DZ09]{DemZeiBook}
A.~Dembo and O.~Zeitouni.
\newblock {\em Large deviations techniques and applications}, volume~38.
\newblock Springer Science \& Business Media, 2 edition, 2009.

\bibitem[GKR16]{GanKimRam16}
N.~Gantert, S.~S. Kim, and K.~Ramanan.
\newblock Cram\'er's theorem is atypical.
\newblock In G.~Letzter, K.~Lauter, E.~Chambers, N.~Flournoy, J.~E. Grigsby,
  C.~Martin, K.~Ryan, and K.~Trivisa, editors, {\em Advances in the
  Mathematical Sciences: Research from the 2015 Association for Women in
  Mathematics Symposium}, pages 253--270. Springer International Publishing,
  Cham, 2016.

\bibitem[GKR17]{GanKimRam17}
N.~Gantert, S.~S. Kim, and K.~Ramanan.
\newblock Large deviations for random projections of $\ell^p$ balls.
\newblock {\em The Annals of Probability}, 45:4419--4476, 2017.

\bibitem[GM05]{GuiMad05}
A.~Guionnet and M.~Ma\"{i}da.
\newblock A {F}ourier view on the {R}-transform and related asymptotics of
  spherical integrals.
\newblock {\em Journal of Functional Analysis}, 222(2):435--490, 2005.

\bibitem[Hof09]{Hof09}
P.~D. Hoff.
\newblock Simulation of the matrix {B}ingham-von {M}ises-{F}isher distribution,
  with applications to multivariate and relational data.
\newblock {\em Journal of Computational and Graphical Statistics},
  18(2):438--456, 2009.

\bibitem[JL84]{JohLin84}
W.~B. Johnson and J.~Lindenstrauss.
\newblock Extensions of {L}ipschitz mappings into a {H}ilbert space.
\newblock In {\em Conference in modern analysis and probability ({N}ew {H}aven,
  {C}onn., 1982)}, volume~26 of {\em Contemp. Math.}, pages 189--206. Amer.
  Math. Soc., Providence, RI, 1984.

\bibitem[Kau21]{Kaufmann21}
T.~Kaufmann.
\newblock {Sharp Asymptotics for $q$-Norms of Random Vectors in
  High-Dimensional $\ell_p^n$-Balls}.
\newblock arXiv e-print, arXiv:2102.13513, 2021.

\bibitem[Kim17]{skim-thesis}
S.~S. Kim.
\newblock {\em Problems at the Interface of Probability and Convex Geometry:
  Random Projections and Constrained Processes}.
\newblock PhD thesis, Brown University, 2017.

\bibitem[Kla07]{Kla07a}
B.~Klartag.
\newblock A central limit theorem for convex sets.
\newblock {\em Inventiones mathematicae}, 168:91--131, 2007.

\bibitem[KLR20]{KimLiaRam20}
S.~S. Kim, Y.-T. Liao, and K.~Ramanan.
\newblock {An asymptotic thin shell condition and large deviations for random
  multidimensional projections}.
\newblock arXiv e-print, arXiv:1912.13447, 2020.

\bibitem[KPT19a]{KabProTha19a}
Z.~Kabluchko, J.~Prochno, and C.~Thaele.
\newblock High-dimensional limit theorems for random vectors in
  $\ell_p^n$-balls.
\newblock {\em Communications in Contemporary Mathematics}, 21(1):1--30, 2019.

\bibitem[KPT19b]{KabProTha19}
Z.~Kabluchko, J.~Prochno, and C.~Thaele.
\newblock A new look at random projections of the cube and general product
  measures.
\newblock arXiv e-print: arXiv:1910.02676, 2019.

\bibitem[KR18]{KimRam18}
S.~S. Kim and K.~Ramanan.
\newblock A conditional limit theorem for high-dimensional $\ell^p$-spheres.
\newblock {\em Journal of Applied Probability}, 55:1060--1077, 2018.

\bibitem[LR20]{LiaRam20a}
Y.-T. Liao and K.~Ramanan.
\newblock {Geometric sharp large deviations for random projections of
  $\ell^n_p$ balls and spheres}.
\newblock arXiv e-print, arXiv:2001.04053, 2020.

\bibitem[Mec12]{Mec12b}
E.~Meckes.
\newblock Projections of probability distributions: A measure-theoretic
  {D}voretzky theorem.
\newblock In {\em Geometric Aspects of Functional Analysis}, volume 2050 of
  {\em Lecture Notes in Mathematics}, pages 317--326. Springer, 2012.

\bibitem[OMH13]{OnaMorHal13}
A.~Onatski, M.~J. Moreira, and M.~Hallin.
\newblock Asymptotic power of sphericity tests for high-dimensional data.
\newblock {\em The Annals of Statistics}, 41(3):1204--1231, 06 2013.

\bibitem[OMH14]{OnaMorHal14}
A.~Onatski, M.~J. Moreira, and M.~Hallin.
\newblock Signal detection in high dimension: The multispiked case.
\newblock {\em The Annals of Statistics}, 42(1):225--254, 02 2014.

\bibitem[Roc70]{Roc70}
R.~T. Rockafellar.
\newblock {\em Convex Analysis}.
\newblock Number~28. Princeton University Press, 1970.

\bibitem[RR91]{RacRus91}
S.~T. Rachev and L.~R\"{u}schendorf.
\newblock Approximate independence of distributions on spheres and their
  stability properties.
\newblock {\em The Annals of Probability}, 19(3):1311--1337, 1991.

\bibitem[Sio58]{Sio58}
M.~Sion.
\newblock On general minimax theorems.
\newblock {\em Pacific Journal of Mathematics}, 8(1):171--176, 1958.

\bibitem[SZ90]{SchZin90}
G.~Schechtman and J.~Zinn.
\newblock On the volume of the intersection of two ${L}_p^n$ balls.
\newblock {\em Proceedings of the American Mathematical Society},
  110(1):217--224, 1990.

\bibitem[Tro12]{Tro12}
J.~A. Tropp.
\newblock A comparison principle for functions of a uniformly random subspace.
\newblock {\em Probability Theory and Related Fields}, 153(3-4):759--769, 2012.

\bibitem[Vil08]{Vil08}
C.~Villani.
\newblock {\em Optimal Transport: Old and New}, volume 338.
\newblock Springer Science \& Business Media, 2008.

\bibitem[WWW10]{WanWanWu10}
R.~Wang, X.~Wang, and L.~Wu.
\newblock {S}anov's theorem in the {W}asserstein distance: a necessary and
  sufficient condition.
\newblock {\em Statistics \& Probability Letters}, 80(5):505--512, 2010.

\end{thebibliography}
\end{document}